\theoremstyle{plain}
\newtheorem{thm}{Theorem}[section]
\newtheorem{cor}[thm]{Corollary}
\newtheorem{lem}[thm]{Lemma}
\newtheorem{prop}[thm]{Proposition}
\newtheorem{definition}[thm]{Definition}
\newtheorem{remark}[thm]{Remark}
\newcommand{\E}{K}
\newcommand{\Ehat}{\widehat \E}
\newcommand{\Fhat}{\widehat \F}
\newcommand{\Etilde}{\widetilde\E}
\newcommand{\Em}{\E^-}
\newcommand{\Ep}{\E^+}
\newcommand{\F}{F}
\newcommand{\FB}{\F^B}
\newcommand{\h}{h}
\newcommand{\hE}{\h_\E}
\newcommand{\hF}{\h_\F}
\newcommand{\p}{p}
\newcommand{\pE}{\p_\E}
\newcommand{\pF}{\p_\F}
\newcommand{\f}{f}
\newcommand{\nbf}{\mathbf n}
\newcommand{\nbfOmega}{\nbf_\Omega}
\newcommand{\D}{D}
\newcommand{\Dn}{\D_n}
\newcommand{\B}{B}
\newcommand{\Bn}{\B_n}
\newcommand{\V}{V}
\newcommand{\Vn}{V_n}
\newcommand{\psibf}{\boldsymbol{\psi}}
\newcommand{\wbf}{\mathbf w}
\newcommand{\vbf}{\mathbf v}
\newcommand{\taun}{\mathcal T_n}
\newcommand{\En}{\mathcal E_n}
\newcommand{\EnI}{\En^I}
\newcommand{\EnB}{\En^B}
\newcommand{\nbfF}{\nbf_\F}
\newcommand{\Lcal}{\mathcal L}
\newcommand{\un}{u_n}
\newcommand{\uc}{u_c}
\newcommand{\vn}{v_n}
\newcommand{\nablan}{\nabla_n}
\newcommand{\rhobold}{\boldsymbol \rho}
\newcommand{\Psibold}{\boldsymbol \Psi}
\newcommand{\curl}{\text{curl}}
\newcommand{\curlbf}{\textbf{curl}}
\newcommand{\etaE}{\eta_\E}
\newcommand{\EE}{\mathcal E^\E}
\newcommand{\tbf}{\mathbf t}
\newcommand{\I}{I}
\newcommand{\Ibf}{\textbf {I}}
\newcommand{\ec}{e_c}
\newcommand{\etan}{\eta_n}
\newcommand{\Deltan}{\Delta_n}
\newcommand{\bE}{b_\E}
\newcommand{\bEtilde}{b_{\widetilde \E}}
\newcommand{\bF}{b_\F}
\newcommand{\Eit}{\textit{E}}
\newcommand{\bell}{b_\ell}
\newcommand{\qp}{q_\p}
\newcommand{\qpbf}{q_{\pbf}}
\newcommand{\Ihat}{\widehat I}
\newcommand{\dx}{dx}
\newcommand{\dy}{dy}
\newcommand{\psiF}{\psi_F}
\newcommand{\cetaEtw}{c_{\eta_2,\eta_3}}
\newcommand{\cetaEtwsq}{c_{\eta_2,\eta_3}^2}
\newcommand{\omegaE}{\omega_\E}
\newcommand{\hbf}{\mathbf \h}
\newcommand{\pbf}{\mathbf \p}
\newcommand{\xbf}{\mathbf x}
\newcommand{\taubold}{\boldsymbol{\tau}}
\newcommand{\ssigmabold}{\boldsymbol{\Sigma}}
\newcommand{\sigmabold}{\boldsymbol{\sigma}}
\newcommand{\alphabold}{\boldsymbol{\alpha}}
\newcommand{\nonconferr}{e_{nc}}
\newcommand{\conferr}{e_{c}}
\newcommand{\gD}{g_1}
\newcommand{\gN}{g_2}
\newcommand{\vertiii}[1]{{\left\vert\kern-0.25ex\left\vert\kern-0.25ex\left\vert #1  \right\vert\kern-0.25ex\right\vert\kern-0.25ex\right\vert}}
\numberwithin{equation}{section}
\author{Zhaonan Dong \and
Lorenzo Mascotto \and
Oliver J. Sutton}
\address[Z. Dong]{ Inria, 2 rue Simone Iff, 75589 Paris, France and
 CERMICS, Ecole des Ponts, 77455 Marne-la-Vall\'{e}e 2, France }
 \email{zhaonan.dong@inria.fr}
\address[L. Mascotto]{Fakult\"at f\"ur Mathematik, Universit\"at Wien, 1090 Vienna, Austria}
\email{lorenzo.mascotto@univie.ac.at}
\address[O. J. Sutton]{School of Mathematical Sciences, University of Nottingham, NG7 2QL United Kingdom}
\email{Oliver.Sutton@nottingham.ac.uk}
\date{}
\title[A posteriori error estimates for $hp$-dG schemes for biharmonic problems]{Residual-based a posteriori error estimates for \\ $\boldsymbol{\lowercase{hp}}$-discontinuous Galerkin discretisations of \\the biharmonic problem}
\begin{document}
	
\begin{abstract}
\noindent
We introduce a residual-based \emph{a posteriori} error estimator for a novel $\h\p$-version interior penalty discontinuous Galerkin method for the biharmonic problem in two and three dimensions. 
We prove that the error estimate provides an upper bound and a local lower bound on the error, and that the lower bound is robust to the local mesh size but not the local polynomial degree.
The suboptimality in terms of the polynomial degree is fully explicit and grows at most algebraically.
Our analysis does not require the existence of a $\mathcal{C}^1$-conforming piecewise polynomial space and is instead based on an elliptic reconstruction of the discrete solution
to the~$H^2$ space and a generalised Helmholtz decomposition of the error.
This is the first $hp$-version error estimator for the biharmonic problem in two and three dimensions.
The practical behaviour of the estimator is investigated through numerical examples in two and three dimensions.
\medskip
		
\noindent
\textbf{AMS subject classification}: 65N12, 65N30, 65N50.
		
\medskip
		
\noindent
\textbf{Keywords}: discontinuous Galerkin methods; adaptivity; $\h\p$-Galerkin methods; polynomial inverse estimates; fourth order PDEs; a posteriori error analysis.
\end{abstract}

\maketitle
\section{Introduction}
	
Fourth-order problems are prominent in the theory of partial differential equations (PDEs), modelling physical phenomena such as the
control of large flexible structures, bridge suspension, microelectromechanical systems, thin-plate elasticity, the Cahn-Hilliard phase-field model, and hyperviscous effects in fluid models.
A prototypical fourth-order problem is the biharmonic problem, which arises in modelling the isotropic behaviour of thin plates.
	
Since the introduction of the globally $\mathcal{C}^1$-conforming Argyris element in the 1960s~\cite{argyris1968tuba},
the biharmonic problem has been widely studied in the context of finite element methods; see also~\cite{dupont1979}.
However, partly due to the sheer technicality of implementing $\mathcal{C}^1$-conforming elements, several mixed and nonconforming approaches have been developed over the years.
These impose lower smoothness requirements on the discrete function spaces, typically at the expense of larger or less well conditioned linear systems.
For instance, families of $\mathcal C^0$-elements for Kirchhoff plates were developed in~\cite{BeiraoKirchhoff-apriori, BrennerSung2005, engel2002continuous}; see also~\cite{BrezziFortin, DestuynderSalaun} and the references therein.
Discontinuous Galerkin (dG) methods have been employed, also in $\h\p$ form, in e.g. \cite{baker1977finite, MozolevskiSuli-CMAME, MozolevskiSuli-CMAM, MozolevskiSuli-JSC, GeorgoulisHoustonIPdGhp, DongIPdGhp,gudi2008mixed}.
	
Computable \emph{a posteriori} error estimates and adaptivity for fourth order problems have received increasing attention over the last twenty years.
For instance, we recall the conforming approximations of problems involving the biharmonic operator of~\cite{neittaanmaki2001posteriori},
the treatment of Morley plates~\cite{BMRMorleyAPos,HuShi-MorleyApos},
quadratic $\mathcal C^0$-conforming interior penalty methods~\cite{biharmonic-Gudi-Brenner-Sung} and general order dG methods~\cite{Virtanen:Apos-biharmonic} for the biharmonic problem,
continuous and dG approximations of the Kirchhoff-Love plate~\cite{hansbo2011posteriori},
the dichotomy principle in a posteriori error estimates for fourth order problems~\cite{adjerid2002posteriori}, and
the Ciarlet-Raviart formulation of the first biharmonic problem~\cite{charbonneau1997residual}.
	
The central difficulty in employing conventional techniques to derive a posteriori error estimates for dG and $\mathcal C^0$-conforming interior penalty methods for the biharmonic problem
lies in constructing an averaging operator to a $\mathcal C^1$-conforming finite element space.
Such an operator, which must satisfy optimal $\h\p$-approximation properties, is required to enable the stability of the continuous PDE operator to be applied to the error.
An $\h\p$-version a posteriori error estimator for biharmonic problems has been presented in~\cite{Schroeder:3field}, relying on the assumption of the existence of the above averaging operator with optimal $\h\p$-approximation properties.
In 2D, the averaging operator may be constructed for arbitrary polynomial degrees, based on conventional macro elements, see e.g.~\cite{GeorgoulisHoustonIPdGhp,BrennerSung2005},
while on tetrahedral meshes this is only possible for $\p=3$.
However, also in 2D, an explicit analysis of optimal $\h\p$-approximation estimates is not available.

An alternative approach, recently proposed in~\cite{kawecki2020unified} in the context of nonlinear PDEs in nondivergence form,
is to reconstruct the solution into $\mathcal{C}^{1}$-conforming spaces introduced in~\cite{brenner2019virtual, neilan2019discrete}.
While this allows us to avoid problems with element geometries, it introduces the disadvantage in the current context that the resulting error estimate would gain an additional suboptimality of order $p^d$ in $d$ spatial dimensions,
due to the repeated application of a polynomial inverse estimate apparently necessary for the analysis.
This approach is discussed further in Remark~\ref{averaging operator} below.

The contribution of this paper is to give an explicit analysis of a residual-based a posteriori error estimator
for a novel $\h\p$-version dG discretisation of the biharmonic problem.
In particular, our analysis does not require a $\mathcal{C}^1$-averaging operator, simultaneously addresses both 2D and 3D, and incorporates arbitrary polynomial degrees which may be variable over simplicial and tensor product meshes.
Instead, the proof of the fact that the estimator forms an upper bound on the error is based on an elliptic reconstruction of the dG solution to $H^2$ and a generalised Helmholtz decomposition of the error, as used in error estimates for classical nonconforming elements~\cite{BMRMorleyAPos,CarstenGallistlHu-Morley}.
We further prove that the estimator forms a local lower bound on the error,
using several $\h\p$-explicit polynomial inverse estimates involving bubble functions and extension operators inspired by those of~\cite{MelenkWohlmuth_hpFEMaposteriori, melenk2003hp}.
The resulting lower bound is algebraically suboptimal with respect to the polynomial degree, guaranteeing that the estimator retains the same exponential convergence properties as the error for problems with point or edge singularities.
	
The analysis focusses on 2D and 3D meshes without hanging nodes, although the case of parallelogram or parallelepiped elements with hanging nodes is addressed in Remark~\ref{handing-nodes}.
However, our analysis does not appear to directly extend to the case of simplicial meshes with hanging nodes due to certain missing technical
results regarding the influence of hanging nodes on $\mathcal C^0$-conforming $\h\p$-version quasi-interpolation operators for $H^2$-functions.
Instead, we numerically demonstrate that the presence of hanging nodes has apparently little effect on the resulting scheme or estimator.

Arguments similar to those presented in this paper may be used to prove upper and lower bounds for the estimator for the $\mathcal C^0$-conforming interior penalty methods in~\cite{biharmonic-Gudi-Brenner-Sung, BrennerSung2005};
see Remark~\ref{remark:basta} below.

\paragraph*{\textbf{Outline of the paper.}}
The formulation of the biharmonic problem and its discretisation via an interior penalty dG scheme is presented in Section~\ref{section:IPdG}.
Section~\ref{section:technical-results} contains certain $\h\p$-explicit approximation results and polynomial inverse and extension results required to derive the error estimate.
The derivation of a computable error estimator that provides a local bound on the error, which is explicit in terms of the polynomial degree, is the topic of Section~\ref{section:apos}.
We present 2D and 3D numerical results in Section~\ref{section:nr} and draw some conclusions in Section~\ref{section:conclusions}.
	
\paragraph*{\textbf{Notation.}}
We adopt standard notation for Sobolev spaces; see e.g.~\cite{adamsfournier}.
Given $D \subset \mathbb R^d$,  $d=2$ or $3$, we denote the Sobolev space of order $s \in \mathbb R$ over $D$ by $H^s(D)$, and let~$(\cdot, \cdot)_{s,D}$, $\Vert \cdot \Vert_{s,D}$, and~$\vert \cdot \vert_{s,D}$,denote its associated inner product, norm and seminorm, respectively.

Let~$\nabla$ denote the gradient operator, and define the Laplacian~$\Delta = \nabla \cdot \nabla$, the bilaplacian~$\Delta^2$, and the Hessian matrix~$\D^2 = \nabla \nabla^{\top}$ operators.
Given $\phi \in H^1(\Omega)$ and $\psibf \in [H^1(\Omega)]^{3}$, the vector-valued curl operator is defined as
\[
\curl({\phi}) =  (-\partial_y \phi , 	 \partial_x  \phi)^{\top}, \quad \quad  \curl(\psibf) =  (\partial_y \psi_3 - \partial _z \psi_2, \partial_z \psi_1 - \partial_x \psi_3, \partial _x\psi_2 - \partial _y \psi_3)^{\top}.
\]
For $\vbf \in [H^1(\Omega)]^2$ with $\vbf = (v_1, v_2)^\top$
and $\wbf \in [H^1(\Omega)]^{3\times 3}$ with rows $\wbf_1, \wbf_2$, and $\wbf_3$, the matrix-valued $\curlbf$ operator is defined as
\[
\curlbf(\vbf) = 
\begin{bmatrix}
\curl (v_1), ~\curl(v_2)
\end{bmatrix}^{\top}, \quad  \quad
\curlbf(\wbf) = 
\begin{bmatrix}
\curl (\wbf_1^{\top}),~ \curl(\wbf_2^{\top}), ~ \curl(\wbf_3^{\top})
\end{bmatrix}^{\top}.
\]
Throughout, $c$ denotes a generic positive constant, which is independent of any discretisation parameters, but may depend on the dimension and shape-regularity constants of the mesh.

\section{An interior penalty dG method for the biharmonic problem} \label{section:IPdG}
We present the formulation of the biharmonic problem in Section~\ref{sec:pde}, and introduce a novel interior penalty dG (IPdG) scheme in Section~\ref{subsection:method}.
The scheme is based on a mesh satisfying certain assumptions, which are discussed in Section~\ref{subsection:meshes-degrees}.
	
\subsection{\textbf{The biharmonic problem.}}\label{sec:pde}
Let $\Omega \subset \mathbb R^d$, $d=2$ or $3$, be a bounded polygonal/polyhedral domain and~$\f \in L^2(\Omega)$.
The biharmonic problem reads: find a sufficiently smooth $u : \Omega \rightarrow \mathbb R$ such that
\begin{equation} \label{biharmonic-problem-strong}
\begin{cases}
\Delta^2 u = \f 	& \text{in } \Omega\\
u =\nbfOmega \cdot \nabla u =0				& \text{on } \partial \Omega,
\end{cases}
\end{equation}
where~$\nbfOmega$ denotes the unit outward normal vector on~$\partial \Omega$.
Define
\[
\V := H^2_0(\Omega),\quad \quad \B(u,v) := \int_\Omega \D^2 u : \D^2 v,
\]
where~$:$ denotes tensor contraction. A weak formulation of~\eqref{biharmonic-problem-strong} reads: find $u \in \V$ such that
\begin{equation} \label{biharmonic-problem-weak}
\B(u,v) = (\f,v)_{0,\Omega} \quad \quad \forall v \in \V.
\end{equation}
The well posedness of problem~\eqref{biharmonic-problem-weak} is proven e.g. in~\cite[Section~5.9]{BrennerScott}.
Inhomogeneous boundary data can be addressed following~\cite{beirao2010nonhomo}; see also Remark~\ref{Inhomogous-BC} below.
	
\subsection{Meshes and polynomial degree distribution} \label{subsection:meshes-degrees}
We consider sequences of decompositions~$\taun$ of~$\Omega$ into disjoint shape-regular triangles or parallelograms in 2D, and tetrahedra or parallelepipeds in 3D.
The set of faces of $\taun$ is denoted by $\En$, and is split into a set of boundary faces $\EnB$, which lie on $\partial \Omega$, and internal faces $\EnI = \En \setminus \EnB$.
The shape-regularity assumption implies that mesh~$\taun$ is locally quasi-uniform, i.e.,
there exists a constant $c_{\operatorname{mesh}} \ge 1$ such that, for elements $\E_1, \E_2 \in \taun$ with $\overline{\E}_1 \cap \overline{\E}_2 \ne \emptyset $,
\begin{equation} \label{quasi-uniformity:assumption}
c_{\operatorname{mesh}}^{-1} \h_{\E_1} \le \h_{\E_2} \le c_{\operatorname{mesh}} \h_{\E_1}.
\end{equation}
Here, $\hE$ and $\hF$ denote the diameter of the element $\E \in \taun$, and the face $\F \in \En$, respectively.
These local mesh sizes form the piecewise constant mesh size function~$\hbf: \Omega \rightarrow \mathbb R^+$ given by
\[
\hbf (\xbf) :=
\begin{cases}
\hE 	& \text{if } \xbf \in \E \text{ for some $\E \in \taun$} \\
\hF	 	& \text{if } \xbf \in \F \text{ for some $\F \in \En$}. 
\end{cases}
\]	
The meshes are assumed to contain no hanging nodes, although a technical argument outlined in Remark~\ref{handing-nodes} below extends our results to cover hanging nodes in parallelogram or parallelepiped meshes.
	
Another consequence of the shape-regularity of~$\taun$ is the existence of a shape-regular kite~$\Etilde$ associated with each internal face $\F \in \EnI$ such that $\Etilde \subset \E_1 \cup \E_2$,
where $\E_1, \E_2 \in \taun$ are the elements meeting at $\F$, and $\Etilde$ is symmetric with respect to $\F$.
An example of such a kite is illustrated in Figure~\ref{figure:kites} for {triangular elements}.
A 3D kite may be constructed analogously, as a hexahedron for tetrahedral meshes or an octahedron for {cubic} meshes.
		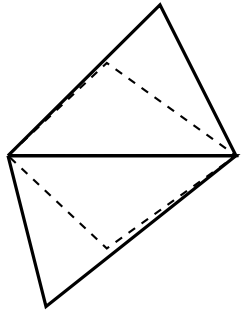
\begin{figure}
			\centering
			\begin{minipage}{0.30\textwidth}
				\begin{center}
					\begin{tikzpicture}[scale=1]
					\draw[black, very thick, -] (0,0) -- (3,0) -- (2,2) -- (0,0);
					\draw[black, very thick, -] (0,0) -- (3,0) -- (0.5,-2) -- (0,0);
					\draw[black, thick, dashed] (0,0) -- (3,0) -- (1.3,1.23) -- (0,0);
					\draw[black, thick, dashed] (0,0) -- (3,0) -- (1.3,-1.23) -- (0,0);
					\end{tikzpicture}
				\end{center}
			\end{minipage}
			\caption{Kites for triangular meshes. The continuous thick lines form the two elements, and the dashed line denotes the perimeter of the kite.}  \label{figure:kites}
		\end{figure}
	
The numerical scheme requires an integer polynomial degree $\pE \geq 2$ associated with each $\E \in \taun$.
We suppose that there exists a constant $c_\p \ge 1$ such that, 
for all $\E_1, \E_2 \in \taun$ with $\overline{\E}_1 \cap \overline{\E}_2 \ne \emptyset $,
	\begin{equation} \label{bounded:polynomial}
		c_{p}^{-1} p_{\E_1} \le p_{\E_2} \le c_{p} p_{\E_1}.
	\end{equation}
The local polynomial degrees are collected by the piecewise constant function~$\pbf: \Omega \rightarrow \mathbb R^+$ with
\[
\pbf (\xbf) :=
\begin{cases}
\pE 				& \text{if } \xbf \in \E \text{ for some } \E \in \taun, \\
\max(p_{\E_1}, p_{\E_2})	& \text{if } \xbf \in \F \in \EnI, \text{ where } \E_1, \E_2 \in \taun \text{ meet at } \F, \\
\pE					& \text{if } \xbf \in \F \in \EnB, \text{ where } \F \text{ is a face of } \E.
\end{cases}
\]
Finally, we define face jump and average operators.
Let $v$ be a scalar-, vector-, or matrix-valued function on $\Omega$, smooth on each $\E \in \taun$ but possibly discontinuous across each $\F \in \EnI$.
For $\F \in \EnI$, let~$\Ep$ and~$\Em$ be the two mesh elements meeting at~$\F$, and let~$v^+$ and~$v^-$ denote the restriction of~$v$ to~$\Ep$ and~$\Em$, respectively.
The face average and jump operators on $\F$ are given by
\[
\{v \}(\xbf) :=  \frac{1}{2}(v^+(\xbf) + v^-(\xbf)), \quad  \quad \llbracket v \rrbracket(\xbf) := v^+(\xbf) - v^-(\xbf) \quad \forall \xbf \in \F,
\]
respectively. 
These definitions are extended to boundary faces where $\{v \}(\xbf) = \llbracket v \rrbracket(\xbf) = v(\xbf)$.
	
\begin{remark} \label{remark:mixed-meshes}
To simplify the notation, we avoid considering mixed meshes of simplicial and tensor product elements.
Although we do not expect such meshes to pose significant difficulties, this assumption allows us to simplify the presentation of the quasi-interpolant in Proposition~\ref{proposition:BabuSuri-KarkulikMelenk} below.
\end{remark}

\subsection{The interior penalty dG scheme} \label{subsection:method}
Given a mesh~$\taun$ and a polynomial degree distribution~$\pbf$, we introduce the dG space of discontinuous piecewise polynomial functions over~$\taun$ as
\[
\Vn := \{\qpbf \in L^2(\Omega) : \qpbf{}_{|\E} \in \mathbb{P}_{\pE}(\E) \text{ for each } \E \in \taun \}.
\]
For future convenience we introduce the following broken norms: given~$s>0$,
\begin{equation} \label{broken-norms}
\Vert \cdot \Vert_{s,\taun}^2 := \sum_{\E \in \taun} \Vert \cdot \Vert^2_{s,\E}.
\end{equation}
We define the lifting operator $\Lcal :\Vn + \V \rightarrow [\Vn]^{d\times d}$, $d=2$, $3$, as
\begin{equation} \label{lifting-operator}
	\int_\Omega \Lcal (\un) : \mathbf{\vn} := \int_{\En} \{\nbf \cdot (\nabla \cdot \mathbf{\vn}) \} \llbracket \un  \rrbracket   -  \int_{\En} \{ (\mathbf{\vn}) \nbf \} \cdot \llbracket \nabla \un \rrbracket   \quad \quad \forall \mathbf{\vn} \in [\Vn]^{d\times d},
\end{equation}
where~$\nbf$ denotes the unit normal vector to a face, with arbitrary orientation on internal faces and directed outward on boundary faces.
Let $\Dn^2$ denote the elementwise Hessian matrix operator, given by $(\Dn^2 v){}_{|\E} = \D^2 (v{}_{|\E})$ on each~$\E \in \taun$.
We also introduce the piecewise constant dG penalisation parameters $\sigmabold:\En \rightarrow \mathbb R^+$ and $\taubold:\En \rightarrow \mathbb R^+$.
Then, we construct the interior penalty dG bilinear form $\Bn : \Vn \times \Vn \to \mathbb{R}$ as
\[
\begin{split}
	\Bn(\un, \vn) 	
	:=
	&\int_\Omega  \Dn^2 \un : \Dn^2 \vn  +	\int_\Omega \Big(	\Lcal (\un) : \Dn^2 \vn + \Lcal(\vn) : \Dn^2 \un 		\Big) 	\\
	&	+ 	\int_{\En} \Big(	\sigmabold \llbracket \un \rrbracket  \llbracket \vn \rrbracket + \taubold \llbracket \nabla \un \rrbracket \cdot \llbracket \nabla \vn \rrbracket \Big).
\end{split}
\]
Observe that the terms in the dG bilinear form involving the lifting operators are equivalent to
\[
\begin{split}
\int_\Omega \Big(	\Lcal (\un) : \Dn^2 \vn + \Lcal(\vn) : \Dn^2 \un \Big) 
& = \int_{\En} \{\nbf \cdot (\nabla \Delta \vn) \} \llbracket \un  \rrbracket   -  \int_{\En} \{ (\Dn^2\vn)  \nbf \} \cdot \llbracket \nabla \un \rrbracket \\
& \quad   + \int_{\En} \{\nbf \cdot (\nabla \Delta \un ) \} \llbracket \vn  \rrbracket   -  \int_{\En} \{ (\Dn^2 \un) \nbf \} \cdot \llbracket \nabla \vn \rrbracket.
\end{split}
\]
We pose the following interior penalty dG scheme for approximating solutions to the biharmonic problem~\eqref{biharmonic-problem-weak}: find $\un \in \Vn$ such that
\begin{equation} \label{IPdGFEM}
	\Bn(\un, \vn) = (\f, \vn)_{0,\Omega} \quad  \forall  \vn \in \Vn.
\end{equation}
For $v \in \Vn + \V$, we define the dG norm associated with the bilinear form $\Bn(\cdot, \cdot)$ as
\begin{equation} \label{dG:norm}
	\Vert v \Vert_{dG} ^2  := \Vert \Dn^2 v \Vert ^2_{0,\Omega} +\sum_{\F \in \En}\Vert \taubold^{\frac{1}{2}} \llbracket \nabla v \rrbracket \Vert^2_{0,\F}+ \sum_{\F \in \En} \Vert \sigmabold^{\frac{1}{2}} \llbracket v \rrbracket \Vert^2_{0,\F}.
\end{equation}
The stability of $\Bn$ in this norm follows from the properties of $\mathcal{L}$ and a suitable choice of $\sigmabold$ and $\taubold$, as encapsulated in the following result,
which may be proven by arguing as in~\cite[Lemma~5.1]{GeorgoulisHoustonIPdGhp}.
\begin{lem}[Stability of the scheme] \label{lemma:lifting-stab}
	There exists a constant $c_s > 0$ such that $\Lcal$ satisfies
	\[
	\Vert \Lcal (\vn) \Vert ^2_{0,\Omega} 	\leq c_s \Big( \Big\Vert \Big( \frac{\pbf^6}{\hbf^3}  \Big)^{\frac{1}{2}} \llbracket \vn \rrbracket \Big\Vert^2_{0,\En}+ 	\Big\Vert \Big( \frac{\pbf^2}{\hbf}  \Big)^{\frac{1}{2}} \llbracket \nabla \vn \rrbracket \Big\Vert^2_{0,\En} \Big).
	\]
	Consequently, if we pick the dG penalisation parameters as
	\begin{equation} \label{dG:stab-parameters}
		\sigmabold = c_{\sigmabold} \frac{\pbf^6}{\hbf^3}, \quad  \quad \taubold = c_{\taubold} \frac{\pbf^2}{\hbf},
	\end{equation}
	with $c_{\sigmabold}$, $c_{\taubold}  \geq 2c_s+\frac{1}{2}$, then the dG bilinear form satisfies
	\[
	\Bn(v, v)\geq \frac{1}{2} \Vert v \Vert_{dG} ^2   , \quad  \quad \Bn(u, v) \le 2 \Vert u \Vert_{dG} \Vert v \Vert_{dG},
	\]
	for all $u$, $v \in V + \Vn$, and discrete problem~\eqref{IPdGFEM} is well posed.
\end{lem}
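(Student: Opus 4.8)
The plan is to split the statement into two parts: first an $\h\p$-explicit bound on the lifting operator $\Lcal$, and then a standard coercivity/continuity argument for $\Bn$ obtained from it by Young's inequality. For the lifting bound, I would test the defining identity~\eqref{lifting-operator} with $\mathbf{\vn} = \Lcal(\vn) \in [\Vn]^{d\times d}$, so that the left-hand side becomes $\Vert \Lcal(\vn) \Vert_{0,\Omega}^2$. On the right-hand side one then has two face terms: $\int_{\En} \{\nbf \cdot (\nabla \cdot \Lcal(\vn))\} \llbracket \vn \rrbracket$ and $-\int_{\En} \{(\Lcal(\vn))\nbf\} \cdot \llbracket \nabla \vn \rrbracket$. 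These are estimated by Cauchy--Schwarz on each face, followed by the $\h\p$-version polynomial inverse (trace) estimates of the type $\Vert \partial_\nbf q \Vert_{0,\F}^2 \lesssim \p^2 \h_\E^{-1} \Vert \nabla q \Vert_{0,\E}^2$ and $\Vert \nabla \cdot \mathbf q \Vert_{0,\F}^2 \lesssim \p^4 \h_\E^{-1}\,\h_\E^{-2}\Vert \mathbf q\Vert_{0,\E}^2$ applied to the (piecewise polynomial) entries of $\Lcal(\vn)$; combined with the local quasi-uniformity~\eqref{quasi-uniformity:assumption} and the bounded-degree assumption~\eqref{bounded:polynomial} one collects the weights $\pbf^6/\hbf^3$ and $\pbf^2/\hbf$ on the face jump terms and a factor $\Vert \Lcal(\vn)\Vert_{0,\Omega}$ on the other side, which is then divided out. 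This is essentially the argument of~\cite[Lemma~5.1]{GeorgoulisHoustonIPdGhp}, adapted to the biharmonic lifting (one extra derivative, hence $\pbf^6/\hbf^3$ rather than $\pbf^2/\hbf$ on the jump of $\vn$ itself).

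For the second part, I would expand $\Bn(v,v)$ using the equivalent form of the lifting terms written just before the statement, giving
\[
\Bn(v,v) = \Vert \Dn^2 v\Vert_{0,\Omega}^2 + 2\int_\Omega \Lcal(v):\Dn^2 v + \sum_{\F\in\En}\Vert\sigmabold^{1/2}\llbracket v\rrbracket\Vert_{0,\F}^2 + \sum_{\F\in\En}\Vert\taubold^{1/2}\llbracket\nabla v\rrbracket\Vert_{0,\F}^2.
\]
The cross term is controlled by Cauchy--Schwarz and Young's inequality: $2\big|\int_\Omega \Lcal(v):\Dn^2 v\big| \le \frac12\Vert\Dn^2 v\Vert_{0,\Omega}^2 + 2\Vert\Lcal(v)\Vert_{0,\Omega}^2$, and then the lifting bound just proven replaces $2\Vert\Lcal(v)\Vert_{0,\Omega}^2$ by $2c_s$ times the weighted jump terms. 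Choosing $\sigmabold = c_{\sigmabold}\pbf^6/\hbf^3$ and $\taubold = c_{\taubold}\pbf^2/\hbf$ with $c_{\sigmabold}, c_{\taubold} \ge 2c_s + \tfrac12$ leaves at least $\tfrac12$ of each jump term and $\tfrac12\Vert\Dn^2 v\Vert_{0,\Omega}^2$, yielding $\Bn(v,v)\ge\tfrac12\Vert v\Vert_{dG}^2$. Continuity $\Bn(u,v)\le 2\Vert u\Vert_{dG}\Vert v\Vert_{dG}$ follows the same way: bound the consistency terms via Cauchy--Schwarz, the lifting estimate, and the explicit choice of penalty parameters, so that every term is dominated by a product of $dG$-norms with total constant at most $2$. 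Well-posedness of~\eqref{IPdGFEM} is then immediate from the Lax--Milgram lemma on the finite-dimensional space $\Vn$.

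The main obstacle is the lifting estimate, and within it the correct tracking of the $\h\p$-weights: one must apply the polynomial inverse estimates to $\Lcal(v)$ and to $\nabla\cdot\Lcal(v)$ (not just to $v$), so the powers of $\pbf$ and $\hbf$ must be apportioned carefully between the two face integrals in~\eqref{lifting-operator} to produce exactly the weights $\pbf^6/\hbf^3$ on $\llbracket v\rrbracket$ and $\pbf^2/\hbf$ on $\llbracket\nabla v\rrbracket$. The remaining subtlety is purely bookkeeping: passing from elementwise weights to the piecewise-constant functions $\hbf$, $\pbf$ on faces requires~\eqref{quasi-uniformity:assumption} and~\eqref{bounded:polynomial} to absorb the mismatch between the two elements sharing a face into the generic constant $c_s$. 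Everything else is a routine Young-inequality argument with explicitly chosen constants.
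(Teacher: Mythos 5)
Your proposal is correct and takes exactly the route the paper intends: the paper does not write out the proof but defers to \cite[Lemma~5.1]{GeorgoulisHoustonIPdGhp}, and your argument (test the lifting identity~\eqref{lifting-operator} with $\mathbf v_n=\Lcal(v_n)$, apply the $hp$-trace and $H^1$--$L^2$ inverse estimates of Proposition~\ref{prop:inverse-trace}, divide out $\Vert\Lcal(v_n)\Vert_{0,\Omega}$, then get coercivity and continuity from Young/Cauchy--Schwarz and the explicit choice of $\sigmabold$, $\taubold$) is precisely that argument, adapted to the biharmonic setting. One small slip worth correcting: the combined estimate for $\Vert\nabla\cdot\mathbf q\Vert_{0,\F}^2$ should read $\lesssim \p^6\h_\E^{-3}\Vert\mathbf q\Vert_{0,\E}^2$ (a factor $\p^2\h^{-1}$ from the trace inverse~\eqref{inverse:L2trace} times $\p^4\h^{-2}$ from~\eqref{inverse:H1L2}), not $\p^4\h_\E^{-3}$ as written; your final weight $\pbf^6/\hbf^3$ on $\llbracket v_n\rrbracket$ is nevertheless correct, so this is only a typographical bookkeeping error rather than a gap.
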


\begin{remark}
The dG method~\eqref{IPdGFEM} is based on the Hessian weak formulation of the biharmonic problem, rather than the Laplacian formulation used e.g in~\cite{GeorgoulisHoustonIPdGhp}.
In particular, it may be viewed as an extension of the formulation used for the $\mathcal C^0$-conforming interior penalty method in~\cite{BrennerSung2005}, with additional face and penalisation terms to account for the fully discontinuous trial and test functions.
\end{remark}

\section{$\h\p$-explicit polynomial inverse and extension results} \label{section:technical-results}

In this section, we present a variety of $\h\p$-explicit approximation results and polynomial inverse and extension estimates, which are required for the error estimate of Section~\ref{section:apos}.
Throughout, we suppose that $\p \in \mathbb{N}$, and we use $b_{\E}$ to denote the standard bubble function constructed on the polygon or polyhedron $\E$ as the product of the affine functions vanishing on each face of $\E$.

First, we recall the standard trace inequality from e.g.~\cite[Theorem (1.6.6)]{BrennerScott}.
For this, we introduce the concept of chunkiness parameter of a domain~$\omega \subset \mathbb R^d$. We set
\begin{equation} \label{chunkiness:parameter}
\gamma := \frac{\text{diam}(\omega)}{\rho_{\text{max}}},
\end{equation}
where~$\rho_{\text{max}}$ denotes the maximum over the diameter of all possible balls contained in~$\omega$.

The boundedness of chunkiness parameter~\eqref{chunkiness:parameter} of an element~$\E \in  \taun$ is a consequence of the shape-regularity assumption in Section~\ref{subsection:meshes-degrees}.

\begin{prop}[Trace inequality]
Given a bounded Lipschitz domain~$\omega \subset \mathbb{R}^d$ with diameter~$\h$ and bounded chunkiness parameter~\eqref{chunkiness:parameter}, there exists a constant $c>0$ depending only on $\omega$ such that
\begin{equation} \label{standard:trace}
\Vert v \Vert_{0,\partial \omega}^2 \le c \left( \h^{-1}\Vert v \Vert_{0,\omega}^2 + \Vert v \Vert_{0,\omega}\vert v \vert_{1,\omega}\right) \quad \forall v \in H^1(\omega).
\end{equation}
\end{prop}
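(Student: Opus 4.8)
The estimate~\eqref{standard:trace} is classical (see~\cite[Theorem~(1.6.6)]{BrennerScott}), so I only outline the argument. The plan is to first remove the $\h$-dependence by a scaling argument, and then prove the rescaled inequality by applying the divergence theorem to $v^2$ times a well chosen vector field. For the first step, set $\hat\omega := \h^{-1}\omega$, which has unit diameter and the same chunkiness parameter~\eqref{chunkiness:parameter} as $\omega$, and write $\hat v(\hat\xbf) := v(\h\hat\xbf)$. The change of variables $\xbf = \h\hat\xbf$ gives
\[
\Vert v\Vert_{0,\partial\omega}^2 = \h^{d-1}\Vert\hat v\Vert_{0,\partial\hat\omega}^2, \qquad \Vert v\Vert_{0,\omega}^2 = \h^{d}\Vert\hat v\Vert_{0,\hat\omega}^2, \qquad \vert v\vert_{1,\omega}^2 = \h^{d-2}\vert\hat v\vert_{1,\hat\omega}^2,
\]
so that both sides of~\eqref{standard:trace} carry the common factor $\h^{d-1}$; it therefore suffices to prove
\[
\Vert \hat v\Vert_{0,\partial\hat\omega}^2 \le \hat c\left(\Vert \hat v\Vert_{0,\hat\omega}^2 + \Vert \hat v\Vert_{0,\hat\omega}\,\vert\hat v\vert_{1,\hat\omega}\right)
\]
with $\hat c$ depending only on $\hat\omega$, hence only on $\omega$.

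For the rescaled inequality I would use the fact that a bounded Lipschitz domain $\hat\omega$ admits a vector field $\boldsymbol\Phi \in [W^{1,\infty}(\hat\omega)]^d$ whose normal trace is uniformly positive, $\boldsymbol\Phi\cdot\nbf \ge c_0 > 0$ almost everywhere on $\partial\hat\omega$, with $c_0$ and $\Vert\boldsymbol\Phi\Vert_{W^{1,\infty}(\hat\omega)}$ depending only on $\hat\omega$: for domains star-shaped with respect to a ball one may take $\boldsymbol\Phi$ radial about the centre of that ball, and the general Lipschitz case follows by patching such fields with a partition of unity (this is the mechanism underlying~\cite[Theorem~(1.6.6)]{BrennerScott}). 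By density of $C^\infty(\overline{\hat\omega})$ in $H^1(\hat\omega)$ it is enough to treat smooth $\hat v$, for which the divergence theorem applied to $\hat v^2 \boldsymbol\Phi$ gives
\[
c_0 \int_{\partial\hat\omega} \hat v^2 \le \int_{\partial\hat\omega} \hat v^2\, \boldsymbol\Phi\cdot\nbf = \int_{\hat\omega} \hat v^2\, \div\boldsymbol\Phi + 2\int_{\hat\omega} \hat v\, \boldsymbol\Phi\cdot\nabla\hat v.
\]
Estimating the volume terms by $\Vert\div\boldsymbol\Phi\Vert_{L^\infty(\hat\omega)}\Vert\hat v\Vert_{0,\hat\omega}^2$ and, via Cauchy--Schwarz, by $2\Vert\boldsymbol\Phi\Vert_{L^\infty(\hat\omega)}\Vert\hat v\Vert_{0,\hat\omega}\vert\hat v\vert_{1,\hat\omega}$, and dividing by $c_0$, yields the rescaled estimate with $\hat c = c_0^{-1}\max\{\Vert\div\boldsymbol\Phi\Vert_{L^\infty(\hat\omega)},\, 2\Vert\boldsymbol\Phi\Vert_{L^\infty(\hat\omega)}\}$.

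The only genuinely delicate point is the construction of $\boldsymbol\Phi$ with a strictly positive lower bound on its normal component and with norms controlled solely by the geometry of $\hat\omega$; the remainder is just the divergence theorem, Cauchy--Schwarz, and the scaling bookkeeping. Finally, when~\eqref{standard:trace} is later applied to mesh elements $\E\in\taun$, the shape-regularity assumption of Section~\ref{subsection:meshes-degrees} bounds their chunkiness parameter uniformly, so the resulting constant can be taken independent of $\E$, as required in the sequel.
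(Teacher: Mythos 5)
The paper does not prove this proposition; it simply cites it from Brenner--Scott~\cite[Theorem~(1.6.6)]{BrennerScott}. Your argument — rescaling to unit diameter and then applying the divergence theorem to $\hat v^2\boldsymbol\Phi$ for a vector field with uniformly positive normal trace — is precisely the standard proof underlying that reference, and the scaling bookkeeping and the resulting constant are all correct. The only slight overstatement is in the final sentence: for mesh elements the constant is controlled by shape regularity, which is what is actually needed, but for a general Lipschitz domain the construction of $\boldsymbol\Phi$ requires control of the Lipschitz character of $\partial\hat\omega$ rather than merely its chunkiness parameter; since the paper applies~\eqref{standard:trace} only to shape-regular simplices and boxes (which are convex, hence star-shaped with respect to their inscribed ball), this is a non-issue in context.
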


The following $\h\p$-explicit inverse estimates are well known.
Estimate~\eqref{inverse:L2trace} was proven in~\cite[Theorem~4]{HestavenWarburton-trace} with explicit constants.
On the other hand, the 2D variant of~\eqref{inverse:H1L2} may be found in~\cite[Theorem~4.76]{SchwabpandhpFEM}, and the 3D case follows analogously.
	
\begin{prop}[$\h\p$-explicit inverse estimates] \label{prop:inverse-trace}
Let $\E$ be a shape-regular triangle, parallelogram, tetrahedron, or parallelepiped, with diameter~$\h$, and let~$\F$ be a face of~$\E$ with diameter scaling as~$\h$.
Then, there exists a constant $c > 0$ independent of~$\h$ or~$\p$ such that,
for all $\qp \in \mathbb P_\p(\E)$ if $\E$ is a triangle or tetrahedron, or $\qp \in \mathbb Q_\p(\E)$ if $\E$ is a parallelogram or parallelepiped,
		\begin{equation} \label{inverse:L2trace}
			\Vert \qp \Vert_{0,\F} \le c p h^{-\frac{1}{2}} \Vert \qp \Vert_{0,\E}.
		\end{equation}
		and
		\begin{equation} \label{inverse:H1L2}
			\vert \qp \vert_{1,\E} \le c \p^2 \h^{-1} \Vert \qp \Vert_{0,\E}.
		\end{equation}
	\end{prop}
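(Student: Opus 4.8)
The plan is to reduce both inequalities to their counterparts on a reference element, invoke the reference-element bounds already established in the literature, and then transfer them back by a scaling argument, checking that shape-regularity confines all hidden constants to a dependence on $c_{\operatorname{mesh}}$ and $d$ only.

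First I would fix an affine (for $\mathbb P_\p$) or bi-affine (for $\mathbb Q_\p$) bijection $\Phi_\E\colon\Ehat\to\E$, where $\Ehat$ is the unit reference triangle/tetrahedron or the unit reference square/cube, writing $\Phi_\E(\widehat\xbf)=A_\E\widehat\xbf+a_\E$. Shape-regularity together with the boundedness of the chunkiness parameter gives $\Vert A_\E\Vert\le c\,\h$, $\Vert A_\E^{-1}\Vert\le c\,\h^{-1}$, and $|\det A_\E|\simeq\h^d$; moreover the face $\F$ is the image of a face $\Fhat$ of $\Ehat$, with $(d-1)$-dimensional surface measures related by a factor $\simeq\h^{d-1}$. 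For $\qp$ polynomial on $\E$, set $\widehat q:=\qp\circ\Phi_\E\in\mathbb P_\p(\Ehat)$ (resp. $\mathbb Q_\p(\Ehat)$); the change of variables yields the identities $\Vert\qp\Vert_{0,\E}^2=|\det A_\E|\,\Vert\widehat q\Vert_{0,\Ehat}^2$, $\Vert\qp\Vert_{0,\F}^2\simeq\h^{d-1}\Vert\widehat q\Vert_{0,\Fhat}^2$, and, using $(\nabla\qp)\circ\Phi_\E=A_\E^{-\top}\nabla\widehat q$, the bound $|\qp|_{1,\E}\le\Vert A_\E^{-1}\Vert\,|\det A_\E|^{1/2}\,|\widehat q|_{1,\Ehat}$.

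Next I would invoke the reference-element estimates. For \eqref{inverse:L2trace} I use $\Vert\widehat q\Vert_{0,\Fhat}\le c\,\p\,\Vert\widehat q\Vert_{0,\Ehat}$, which in the simplicial case is \cite[Theorem~4]{HestavenWarburton-trace} and in the tensor-product case follows from the one-dimensional Gauss--Lobatto/Markov bound applied to the variable transverse to the face and tensorised over the remaining directions. For \eqref{inverse:H1L2} I use $|\widehat q|_{1,\Ehat}\le c\,\p^2\,\Vert\widehat q\Vert_{0,\Ehat}$, which for $\mathbb Q_\p$ follows from the one-dimensional $L^2$-Markov inequality $\Vert r'\Vert_{0,(-1,1)}\le c\,\p^2\Vert r\Vert_{0,(-1,1)}$ applied coordinatewise, and for $\mathbb P_\p$ on the simplex is \cite[Theorem~4.76]{SchwabpandhpFEM}. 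Substituting these into the scaling identities and cancelling the common factor $|\det A_\E|^{1/2}\simeq\h^{d/2}$ gives $\Vert\qp\Vert_{0,\F}\le c\,\p\,\h^{(d-1)/2}\h^{-d/2}\Vert\qp\Vert_{0,\E}=c\,\p\,\h^{-1/2}\Vert\qp\Vert_{0,\E}$ and $|\qp|_{1,\E}\le c\,\h^{-1}\,\p^2\,\Vert\qp\Vert_{0,\E}$, which are precisely \eqref{inverse:L2trace} and \eqref{inverse:H1L2}.

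The only genuinely non-routine ingredient is the sharp $\p$-dependence of the reference-element bounds on the simplex, where the tensor-product argument is unavailable and one must expand $\widehat q$ in a Jacobi/Koornwinder basis and control the growth of the trace and derivative operators on that basis; this is exactly what the cited results supply, so it need not be reproved here. Everything else is bookkeeping of the affine map, and shape-regularity guarantees that the constants produced by it depend only on $c_{\operatorname{mesh}}$ and the dimension.
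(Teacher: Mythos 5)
Your proof is correct and follows the standard route that the paper itself relies on but does not spell out: the paper simply cites \cite[Theorem~4]{HestavenWarburton-trace} for the trace estimate and \cite[Theorem~4.76]{SchwabpandhpFEM} for the $H^1$--$L^2$ estimate and leaves the transfer to a physical element implicit. Your scaling argument (affine pullback to the reference element, cancellation of $|\det A_\E|^{1/2}\simeq\h^{d/2}$, and shape-regularity absorbing the map constants) is exactly the standard bookkeeping that makes those citations yield the stated $\h$-dependence, so the two are in agreement.
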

	
The analysis in Section~\ref{section:apos} below requires a $\mathcal C^0$-conforming $\h\p$-quasi-interpolant for functions that are not necessarily smooth.
For this, we use a generalisation of Babu\v ska-Suri operator~\cite[Lemma~4.5]{babuskasurihpversionFEMwithquasiuniformmesh},
constructed by combining it with the Karkulik-Melenk smoothing techniques from~\cite[Section $2$]{KarkulikMelenkCAMWA}.

\begin{prop}[Karkulik-Melenk generalisation of the Babu\v ska-Suri $hp$-quasi-interpolant] \label{proposition:BabuSuri-KarkulikMelenk}
		Given a domain $\Omega \subset \mathbb{R}^d$ partitioned into a mesh $\taun$ of triangles, parallelograms, tetrahedra or parallelepipeds,
		there exists an operator $\I : H^s(\Omega) 
		\to \Vn \cap \mathcal C^0(\overline \Omega)$
		such that
		for all $0\le q \le s$,
		\begin{equation} \label{BabuSuri-KarkulikMelenk-prop}
			\Vert v - \I v \Vert_{q,\taun}\le c \left\Vert \frac{\hbf^{\min(\p+1,s)-q}}{\pbf^{s-q}}  v \right\Vert_{s,\taun} \quad \quad \forall v \in H^s(\Omega),
		\end{equation}
		where the constant $c>0$ is independent of~$\h$ and~$\p$, and the broken norms above are defined in~\eqref{broken-norms}.
	\end{prop}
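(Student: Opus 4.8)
The plan is to construct $\I$ as a composition $\I = \Pi \circ \mathcal S$, where $\mathcal S$ is a local smoothing operator following~\cite[Section~2]{KarkulikMelenkCAMWA} and $\Pi$ is the Babu\v ska--Suri $\h\p$-quasi-interpolant of~\cite[Lemma~4.5]{babuskasurihpversionFEMwithquasiuniformmesh}. The operator $\Pi$ already produces an element of $\Vn \cap \mathcal C^0(\overline\Omega)$ satisfying the desired rate~\eqref{BabuSuri-KarkulikMelenk-prop}, but only for inputs smooth enough to possess well-defined traces on every face, edge and vertex of $\taun$, i.e. for $H^s(\Omega)$ with $s$ at least $1$ in 2D and at least $3/2$ in 3D. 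The sole purpose of $\mathcal S$ is to remove this regularity restriction: when $s$ already exceeds the relevant threshold we simply set $\mathcal S = \mathrm{Id}$ and there is nothing to prove beyond quoting~\cite[Lemma~4.5]{babuskasurihpversionFEMwithquasiuniformmesh}, transferred from the reference element to the shape-regular, variable-degree mesh here by affine or bi-affine pull-backs and~\eqref{quasi-uniformity:assumption}--\eqref{bounded:polynomial}; the interesting case is $s$ below the threshold, where in particular $\min(\p+1,s) = s$ on every element since $\pE \ge 2$.

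In that case, I would build $\mathcal S$ elementwise by mollification on overlapping patches: for $\E \in \taun$ let $\omega_\E$ be the union of $\E$ with its face-neighbours, pull $v$ back to a reference patch, convolve with a smooth mollifier of radius $\delta_\E \simeq \hE / \pE^2$, and reassemble using a partition of unity subordinate to $\{\omega_\E\}$. Standard mollification estimates, together with the local quasi-uniformity~\eqref{quasi-uniformity:assumption} and the bounded variation~\eqref{bounded:polynomial} of the degrees (needed so that the $\pE$-dependent radii on adjacent patches are comparable and the patch overlap has bounded multiplicity), then give, for $0 \le q \le s$,
\[
\Vert v - \mathcal S v \Vert_{q,\taun} \le c \Big\Vert \Big(\frac{\hbf}{\pbf^2}\Big)^{s-q} v \Big\Vert_{s,\taun} \le c \Big\Vert \frac{\hbf^{\min(\p+1,s)-q}}{\pbf^{s-q}} v \Big\Vert_{s,\taun}, \qquad \Vert \mathcal S v \Vert_{s,\taun} \le c \Vert v \Vert_{s,\taun},
\]
where the middle inequality uses $\pE \ge 1$ and $\min(\p+1,s) = s$. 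Crucially $\mathcal S v$ is $\mathcal C^\infty$ on each patch, so all the traces required by $\Pi$ are well defined and $\Pi(\mathcal S v) \in \Vn \cap \mathcal C^0(\overline\Omega)$ makes sense. Then I would apply~\cite[Lemma~4.5]{babuskasurihpversionFEMwithquasiuniformmesh} to $\mathcal S v$ at regularity level $s$, obtaining
\[
\Vert \mathcal S v - \Pi(\mathcal S v) \Vert_{q,\taun} \le c \Big\Vert \frac{\hbf^{\min(\p+1,s)-q}}{\pbf^{s-q}} \mathcal S v \Big\Vert_{s,\taun} \le c \Big\Vert \frac{\hbf^{\min(\p+1,s)-q}}{\pbf^{s-q}} v \Big\Vert_{s,\taun},
\]
the last step being the stability bound for $\mathcal S$ combined with~\eqref{quasi-uniformity:assumption}--\eqref{bounded:polynomial} to absorb the patch and degree bookkeeping into $c$. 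A triangle inequality $\Vert v - \I v\Vert_{q,\taun} \le \Vert v - \mathcal S v\Vert_{q,\taun} + \Vert \mathcal S v - \Pi(\mathcal S v)\Vert_{q,\taun}$ together with the two displayed bounds yields~\eqref{BabuSuri-KarkulikMelenk-prop}.

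I expect the main obstacle to be twofold, and it is precisely what the Karkulik--Melenk and Babu\v ska--Suri constructions are designed to handle. First, calibrating the mollification radius $\delta_\E$: it must be small enough that $\mathcal S v$ is genuinely smooth yet large enough that the inverse-type factor it introduces costs no more than the harmless power $\pbf^{-1}$ hidden in the middle inequality above; second, maintaining $\mathcal C^0$-conformity of $\Pi(\mathcal S v)$ inside $\Vn$ across faces carrying \emph{different} polynomial degrees, which the Babu\v ska--Suri operator achieves by performing its face-, edge- and vertex-corrections at the minimal adjacent degree, at a cost controlled by~\eqref{bounded:polynomial}. Checking that these corrections, and the polynomial extensions into element interiors they require, do not degrade the $\h\p$-rate in~\eqref{BabuSuri-KarkulikMelenk-prop} is the most delicate part of the bookkeeping; the remaining estimates are routine scaling arguments on reference elements.
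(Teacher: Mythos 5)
The paper does not actually contain a written proof of Proposition~\ref{proposition:BabuSuri-KarkulikMelenk}: it states the result and points the reader to the Babu\v{s}ka--Suri operator of~\cite[Lemma~4.5]{babuskasurihpversionFEMwithquasiuniformmesh} combined with the Karkulik--Melenk smoothing of~\cite[Section~2]{KarkulikMelenkCAMWA}. Your high-level plan $\I = \Pi \circ \mathcal S$ therefore coincides with what the paper indicates, and the structure via a triangle inequality is right.

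However, the quantitative outline has a genuine gap, centred on the step where you ``apply [Lemma 4.5] to $\mathcal S v$ at regularity level $s$.'' The Babu\v{s}ka--Suri estimate is only available for $s$ at or above a threshold $s_0$ (in 3D, $s_0 > 3/2$, forced by the need for well-defined edge and vertex corrections), and it does \emph{not} hold at a level $s < s_0$ even for smooth input: a smooth function with a narrow spike can have bounded $H^1$-norm while its vertex value is arbitrarily large, so the $\mathcal C^0$-enforcing corrections inside $\Pi$ are not bounded by $\Vert \mathcal S v\Vert_{s,\taun}$ alone. The actual Karkulik--Melenk mechanism is to apply Babu\v{s}ka--Suri at the \emph{threshold} level $s_0$, giving $\Vert \mathcal S v - \Pi \mathcal S v\Vert_{q,\taun} \le c\,(\hbf/\pbf)^{s_0-q}\Vert \mathcal S v\Vert_{s_0,\taun}$, and then to invoke the inverse-type estimate for mollification, $\Vert \mathcal S v\Vert_{s_0,\taun} \le c\,\delta_\E^{-(s_0-s)}\Vert v\Vert_{s,\taun}$, which is absent from your proposal (your only use of $\mathcal S$-stability is $\Vert\mathcal S v\Vert_s \le c\Vert v\Vert_s$). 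Balancing this inverse estimate against the smoothing error $\Vert v - \mathcal S v\Vert_{q,\taun}\le c\,\delta_\E^{s-q}\Vert v\Vert_{s,\taun}$ forces the calibration $\delta_\E \simeq \hE/\pE$, not $\hE/\pE^2$: with your choice, $\Vert\mathcal S v\Vert_{s_0}$ grows by $(\pE^2/\hE)^{s_0-s}$ and the composite bound on $\Vert \mathcal S v - \Pi\mathcal S v\Vert_{q,\taun}$ comes out a factor $\pE^{s_0-s}$ worse than the target in~\eqref{BabuSuri-KarkulikMelenk-prop}. So the obstacle you flagged is real, but the value you propose is on the wrong side of the balance, and the reason it fails is precisely the missing inverse-estimate step you needed to close the argument.
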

	
Now, we focus on $\h\p$-explicit inverse inequalities involving bubble functions.
Their proof is based on the following two technical lemmata.
The first may be proven by arguing as in~\cite{bernardi2001error}.
\begin{lem}[$\h\p$-explicit polynomial inverse estimates with bubble functions in 1D] \label{lemma:1D:Jacobi:expansion:1bubble}  
Let $\Ihat = (-1,1)$. Given $0 \le \alpha \le \beta$,  there exists a constant $c > 0$ depending on $\alpha$ and $\beta$ but not~$\p$ such that, for all $\qp \in \mathbb P_\p(\Ihat)$,
\begin{equation} \label{1D:Jacobi:expansion}
\int_{\Ihat} (1-x^2)^{\alpha} \qp(x)^2  \le c \p^{2(\beta-\alpha)} \int _{\Ihat} (1-x^2)^{\beta} \qp(x)^2,
\end{equation}
and
\begin{equation} \label{1bubble}
\int_{\widehat I} (1-x)^{\alpha} \qp(x)^2   \le c \p^{2(\beta-\alpha)} \int_{\widehat I} (1-x)^\beta \qp(x)^2 ,	\quad
\int_{\widehat I} (1+x)^{\alpha} \qp(x)^2  \le c \p^{2(\beta-\alpha)} \int_{\widehat I} (1+x)^\beta \qp(x)^2.
\end{equation}
\end{lem}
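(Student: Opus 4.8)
The plan is to prove the three weighted inequalities by reducing everything to a single estimate about Jacobi polynomials. First I would fix the orthonormal basis $\{P_k^{(\alpha,\alpha)}\}_{k\ge 0}$ of Jacobi polynomials on $\Ihat=(-1,1)$ with respect to the weight $(1-x^2)^\alpha$, and recall the classical three-term structure: multiplication by $(1-x^2)$ maps $P_k^{(\alpha,\alpha)}$ into a fixed-bandwidth combination of $P_{k-2}^{(\alpha+1,\alpha+1)}$, $P_k^{(\alpha+1,\alpha+1)}$, $P_{k+2}^{(\alpha+1,\alpha+1)}$ (equivalently, the connection coefficients between the $(\alpha,\alpha)$ and $(\alpha+1,\alpha+1)$ families are explicitly bounded). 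The key quantitative input is that the ratio of the squared $L^2_{(1-x^2)^\alpha}$-norm of $P_k^{(\alpha,\alpha)}$ to the squared $L^2_{(1-x^2)^{\alpha+1}}$-norm of the relevant $P_j^{(\alpha+1,\alpha+1)}$ grows like $k^2$ in $k$, uniformly for $\alpha$ in a bounded range — this is exactly the estimate extracted from Bernardi's computations in~\cite{bernardi2001error}.

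Granting the single-step bound, i.e. \eqref{1D:Jacobi:expansion} in the special case $\beta=\alpha+1$, the general case follows by iteration: writing $\beta-\alpha=m+\theta$ with $m\in\mathbb N_0$ and $\theta\in[0,1)$, one telescopes $m$ times through integer increments of the exponent and then handles the remaining fractional step $\theta$. For the fractional step I would use a log-convexity (Hölder interpolation) argument: the map $s\mapsto \log\int_{\Ihat}(1-x^2)^s\,\qp(x)^2$ is convex in $s$, so the value at $\alpha+\theta$ is controlled by a convex combination of the values at $\alpha$ and $\alpha+1$, which turns the bound at the integer endpoints into the claimed bound with exponent $2\theta$ (up to absorbing constants depending on $\alpha,\beta$). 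The constant in each single step may be taken uniform over the compact range of exponents occurring, so the accumulated constant over the finitely many steps is again a finite constant depending only on $\alpha$ and $\beta$; the power of $\p$ multiplies correctly as $\p^{2\cdot 1}$ per integer step times $\p^{2\theta}$, giving $\p^{2(\beta-\alpha)}$ overall.

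For the one-sided weight estimates \eqref{1bubble}, the two claims are mirror images of each other under $x\mapsto -x$, so it suffices to treat $(1-x)$. Here I would work with the one-sided Jacobi family $\{P_k^{(\alpha,0)}\}$ orthonormal for the weight $(1-x)^\alpha$ on $\Ihat$ and repeat the same scheme: a bounded-bandwidth connection relation between the $(\alpha,0)$ and $(\alpha+1,0)$ families, the $k^2$-type growth of the norm ratios, and the identical telescoping-plus-interpolation argument. The structural facts needed here are the standard one-parameter analogues of those used for \eqref{1D:Jacobi:expansion}, so no genuinely new work is required beyond bookkeeping of the asymmetric weight. Alternatively, one can deduce \eqref{1bubble} from \eqref{1D:Jacobi:expansion} by a substitution that flattens one endpoint, but the direct Jacobi argument is cleaner and keeps the $\p$-dependence transparent.

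The main obstacle is the sharp tracking of the $\p$-dependence in the single-step inequality: one must verify that each connection coefficient contributes at most a bounded factor and that the worst norm-ratio is $\Theta(k^2)$ rather than something larger, uniformly in the (bounded) weight parameters. This is precisely the delicate part of Bernardi's analysis that we are invoking; everything downstream — the iteration over integer steps and the convexity interpolation for the fractional remainder — is routine. I would therefore spend the bulk of the write-up either citing~\cite{bernardi2001error} for the base case and then giving the short telescoping/interpolation argument, or, if a self-contained proof is wanted, reproducing the explicit Jacobi recurrence computation with the constants made visible in $\alpha,\beta$ and uniform in $\p$.
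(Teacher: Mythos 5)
The paper itself does not give a proof of this lemma: it simply records that the estimates ``may be proven by arguing as in~\cite{bernardi2001error}''. So I can only assess your sketch on its own merits. The skeleton of your base-case argument (Jacobi expansion in the appropriate ultraspherical family, bounded-bandwidth connection relations, and the $k^2$ growth of the norm ratios, which you correctly attribute to the Bernardi--Dauge--Maday computations) is the right idea and consistent with the citation, and the reduction of the two one-sided bounds in~\eqref{1bubble} to each other via $x \mapsto -x$ is fine.

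The gap is in the fractional-step interpolation. Write $\phi(s) := \log \int_{\Ihat} (1-x^2)^s \qp^2$. Hölder does indeed show $\phi$ is convex, but convexity gives
\[
\phi(\alpha+\theta) \le (1-\theta)\,\phi(\alpha) + \theta\,\phi(\alpha+1),
\]
i.e.\ an \emph{upper} bound on the integral at the intermediate exponent, whereas what the chain $I(\alpha) \le c\, \p^{2m} I(\alpha+m) \le c\,\p^{2(m+\theta)} I(\beta)$ needs is a \emph{lower} bound on $I(\alpha+m+\theta)$, which is the opposite inequality. Concretely, a convex $\phi$ is free to drop steeply just to the right of $\alpha$ and then flatten, so that $\phi(\alpha) - \phi(\alpha+\theta)$ can be as close to the full unit-step budget $\phi(\alpha) - \phi(\alpha+1) \le 2\log\p + \log c$ as one likes; then $\phi(\alpha) - \phi(\alpha+\theta) \not\le 2\theta\log\p + \text{const}$ for $\theta<1$ and large $\p$. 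The issue is structural and is not repaired by absorbing $(\alpha,\beta)$-dependent constants, since the offending factor is $\p$-dependent.

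You can partially salvage the idea. For $\beta \ge 1$ and $\beta-\alpha < 1$, apply the unit step on $[\beta-1,\beta]$ and use the monotonicity of secant slopes for convex functions: for $\beta-1 < \alpha < \beta$,
\[
\frac{\phi(\beta)-\phi(\alpha)}{\beta-\alpha} \;\ge\; \phi(\beta)-\phi(\beta-1) \;\ge\; -(2\log\p+\log c),
\]
which gives exactly $I(\alpha)\le (c\p^2)^{\beta-\alpha} I(\beta)$; combining this with integer telescoping from $\alpha$ up to the final unit interval covers every pair with $\beta \ge 1$. But for $0 \le \alpha < \beta < 1$ there is no unit interval to anchor on, and the convexity argument breaks down entirely. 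To cover the full stated range one must make the Jacobi estimate work directly for arbitrary (non-integer) $\beta - \alpha$ — which is what the cited Bernardi--Dauge--Maday analysis actually does — rather than bootstrapping from the integer-step case. I would rewrite the proof to carry out the single Jacobi expansion with general real exponents and drop the telescoping/interpolation layer altogether.
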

The  second technical lemma extends the above two inequalities to a quasi-1D result on the trapezoid
\[
D = D(a,b,d) = \{(x,y)\in \mathbb R^2 : y\in [0,d],\; -1+a\,y\le x \le 1+b\,y\},
\]
where $d \in (0,1)$ and $a$, $b\in \mathbb R$ satisfy $-1+a\,d< 1+b\,d$.
Analogous arguments directly extend this result to 3D trapezoidal polyhedra.
\begin{lem}[$\h\p$-polynomial inverse estimate with bubbles in quasi-1D trapezoids]\label{lemma:2bubble}
Assume that~$D$ has diameter~$h_D \approx 1$.
To each $y^* \in [0,d]$, associate the segment $I(y^*)=I^*=[a y^* - 1, 1+ b \,y^*]$, and let $F : I^{*} \to [-1, 1]$ be affine with $F(ay^* - 1) = -1$ and $F(1+ b \,y^*) = 1$.
Introduce $\psiF(x) = 1 - (F(x))^2: I^* \rightarrow \mathbb [0,1]$, and define $\Phi \in \mathcal C^0(\overline D)$ such that, for some $s \in \mathbb N$,
\[
			\begin{split}
				& \Phi(\cdot, y^*) \in \mathbb P_{2s}(I^*) ,
				\quad\quad  
				c_1 \psiF^s(x) \le \Phi (x,y^*) \le c_2 \psiF^s(x)  \quad \forall  x\in I^*,
			\end{split}
\]
where the constants $c_1$, $c_2 > 0$ depend only on $a$, $b$, $\Phi$, and $y^*$.
		
Then, there exists a constant $c > 0$, depending only on $\alpha$, $\beta$, and $\Phi$, such that, for all $\beta>\alpha\ge 0$,
\[
			\Vert \Phi^{\frac{\alpha}{2}} \qp \Vert_{0,D} \le c \, \p^{s(\beta-\alpha)} \Vert \Phi^{\frac{\beta}{2}} \qp \Vert_{0,D} \quad  \forall \qp \in \mathbb P_\p (D).
\]
	\end{lem}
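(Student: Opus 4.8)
The plan is to reduce the two-dimensional estimate to the one-dimensional bound~\eqref{1D:Jacobi:expansion} by slicing $D$ along the horizontal segments $I(y^*)$, applying the 1D inequality on each slice with the exponent pair $(s\alpha,s\beta)$, and then integrating in $y^*\in[0,d]$. Concretely, I would fix $y^*\in[0,d]$ and observe that the restriction $\qp(\cdot,y^*)$ is a polynomial of degree at most $\p$ on the (non-degenerate) interval $I^*=I(y^*)$. Pulling back through the affine bijection $F\colon I^*\to[-1,1]$, the function $\qp(\cdot,y^*)\circ F^{-1}$ lies in $\mathbb P_\p(\Ihat)$, and by construction $\psiF(F^{-1}(\xi))=1-\xi^2$. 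Since $\beta>\alpha\ge 0$ forces $s\beta>s\alpha\ge 0$, Lemma~\ref{lemma:1D:Jacobi:expansion:1bubble} applied with the pair $(s\alpha,s\beta)$, followed by undoing the change of variables (whose Jacobian $|I^*|/2$ cancels on the two sides), yields
\[
\int_{I^*}\psiF^{\,s\alpha}\,\qp(\cdot,y^*)^2 \;\le\; c\,\p^{\,2s(\beta-\alpha)}\int_{I^*}\psiF^{\,s\beta}\,\qp(\cdot,y^*)^2 ,
\]
with $c$ depending only on $\alpha$, $\beta$, $s$ and, in particular, on neither $\p$ nor $y^*$.

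Next I would invoke the two-sided bound $c_1\psiF^s\le\Phi(\cdot,y^*)\le c_2\psiF^s$ on $I^*$ to replace the weights $\psiF^{\,s\alpha}$ and $\psiF^{\,s\beta}$ above by $\Phi(\cdot,y^*)^\alpha$ and $\Phi(\cdot,y^*)^\beta$ respectively, at the cost of an extra factor $c_2^\alpha c_1^{-\beta}$. To carry this out uniformly in the subsequent integration, one needs $c_1$ and $c_2$ to be chosen independently of $y^*$; this follows from the continuity of $\Phi$ and $\psiF^s$, the fact that both vanish to exactly order $s$ on the two slanted sides of $D$, and the compactness of $\overline D$. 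Integrating the resulting slice-wise inequality over $y^*\in[0,d]$, using Fubini's theorem to identify the two sides as $\Vert\Phi^{\alpha/2}\qp\Vert_{0,D}^2$ and a constant times $\Vert\Phi^{\beta/2}\qp\Vert_{0,D}^2$, and taking square roots gives the claim; the same one-directional slicing proves the 3D variant mentioned just before the statement.

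I expect the only genuine subtlety to be the uniformity in $y^*$ of the equivalence constants $c_1$, $c_2$ between $\Phi$ and $\psiF^s$ — everything else is a direct slice-by-slice application of Lemma~\ref{lemma:1D:Jacobi:expansion:1bubble} together with an affine change of variables on each horizontal segment.
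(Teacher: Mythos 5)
Your proposal is correct and follows essentially the same route as the paper's proof: slice $D$ along horizontal segments $I(y^*)$, apply the 1D Jacobi-weight inequality of Lemma~\ref{lemma:1D:Jacobi:expansion:1bubble} with the exponent pair $(s\alpha,s\beta)$ after the affine pull-back to $\widehat I$, pass from the $\psiF^s$-weight to $\Phi$ using the two-sided equivalence, argue uniformity of the equivalence constants $c_1,c_2$ over $y^*\in[0,d]$ by continuity and compactness, and integrate in $y^*$. The only cosmetic difference is that you make the affine change of variables to $\widehat I$ and the cancellation of the Jacobian explicit, whereas the paper applies the 1D bound directly on $I^*$; the substance, including the identification of the uniformity of $c_1,c_2$ as the one genuine subtlety, is identical.
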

	\begin{proof}
The proof is based on~\cite[Lemma~D.2]{melenk2003hp-preprint}.
By assumption, $\Phi$ is a continuous function of $y^*$. Therefore, $c_1$ and~$c_2$ depend continuously on~$y^*$.
Since $y^* \in [0,d]$, $c_1$ and~$c_2$ attain their extremal values $\overline{c}_1=\min_{y^*\in [0,d]}(c_1(y^*))$ and $\overline{c}_2=\max_{y^*\in [0,d]}(c_2(y^*))$, which satisfy $0 < c_1 \leq c_2$, and
		\begin{equation} \label{punctual:equivalence:without:dependence}
			\overline{c}_1 \psiF^s (x) \le \Phi(x,y^*) \le \overline{c}_2 \psiF^s (x) \quad \forall x \in I^*.
		\end{equation}
Bounds~\eqref{1D:Jacobi:expansion} and~\eqref{punctual:equivalence:without:dependence} imply that
\[
\begin{split}
\int_{I^*} 	\Phi(x,y^*)^{\alpha}  \qp(x,y^*)^2  
&\le \overline{c}_2^{\alpha} \int_{I^*} \psiF^{s \alpha} (x)  \qp(x,y^*)^2 \le c \overline{c}_2^{\alpha} \p^{s(\beta-\alpha)} \int_{I^*} \psiF^{s \beta} (x)  \qp(x,y^*)^2  	\\
& \le 	c \frac{\overline{c}_2^{\alpha}}{\overline c_1^\beta}  (\p+1)^{s(\beta-\alpha)}\int_{I^*}\Phi(x,y^*)^{\beta}\qp(x,y^*)^2 ,
\end{split}
\]
where $c$ is independent of $y^*$.
		The assertion follows by integrating over $y^*\in [0,d]$.
	\end{proof}	

These two technical lemmata enable us to prove an inverse estimate for bubble functions on 2D and 3D elements, using a partitioning argument introduced by Melenk and Wohlmuth~\cite{MelenkWohlmuth_hpFEMaposteriori}; see also~\cite{melenk2003hp-preprint,melenk2003hp}.
\begin{prop}[$hp$-polynomial inverse estimate with bubbles] \label{proposition:inverse-bubble} 
Let $\E \subset \mathbb R^d$, $d=2$ or~$3$, be a triangle, parallelogram, tetrahedron, or parallelepiped.
Then, there exists a constant $c > 0$, independent of~$\h$ and~$\p$, such that, for all $\qp \in \mathbb P_\p(\E)$ if $\E$ is a triangle or tetrahedron, or $\qp \in \mathbb Q_\p(\E)$ if  $\E$ is a parallelogram or parallelepiped,
\begin{equation} \label{inverse:bubble}
\Vert \bE^{\frac{\alpha}{2}} \qp \Vert_{0,\E} \le c \p^{d (\beta - \alpha)} \Vert \bE^{\frac{\beta}{2}} \qp \Vert_{0,\E}, \quad\quad  -\frac{1}{2}< \alpha \leq \beta.
\end{equation}
\end{prop}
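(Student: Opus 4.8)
The plan is to reduce the $d$-dimensional bubble inequality~\eqref{inverse:bubble} to the quasi-1D result of Lemma~\ref{lemma:2bubble} (and its 3D analogue) via a partitioning argument due to Melenk and Wohlmuth. First I would record that, by scaling, it suffices to treat a reference element~$\E$ with $\h_\E \approx 1$, since both sides of~\eqref{inverse:bubble} scale identically in $\h_\E$ (each factor $\bE^{t/2}q_\p$ carries the same homogeneity), so no $\h$-dependence is introduced; the claimed $\h$-independence of $c$ then follows. Next I would dispose of the degenerate range $-\tfrac12 < \alpha \le 0 \le \beta$ or more generally $\alpha \le 0$: when $\alpha \le 0$ the weight $\bE^{\alpha/2}$ is no longer bounded, but one can still write $\bE^{\alpha/2}q_\p = \bE^{(\alpha-\alpha')/2}\cdot \bE^{\alpha'/2}q_\p$ for a suitable $0 \le \alpha' \le \beta$ and absorb the extra negative power using the one-dimensional Jacobi estimate~\eqref{1D:Jacobi:expansion} applied on lines through~$\E$ — the constraint $\alpha > -\tfrac12$ is exactly what makes $(1-x^2)^{\alpha}$ integrable after the partitioning, so this is where that hypothesis is consumed. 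Thus the core of the proof is the case $0 \le \alpha \le \beta$.

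For the core case I would partition $\E$ into finitely many trapezoidal subdomains $D_i$ (for $d=3$, trapezoidal polyhedra), one associated to each face of~$\E$, such that on each $D_i$ the global bubble function $\bE$ is, up to two-sided constants, comparable to a power of the local "distance-to-the-opposite-side" profile. Concretely, $\bE$ is a product of $d+1$ (simplex) or $2d$ (box) affine factors, each vanishing on one face; within the trapezoid $D_i$ attached to face $F_i$, the factor vanishing on the face of $D_i$ parallel to $F_i$ degenerates linearly, while all remaining affine factors are bounded above and below by positive constants. Hence on $D_i$ one has $c_1\, \ell_i(x)^{?} \le \bE(x) \le c_2\, \ell_i(x)^{?}$ where $\ell_i$ is an affine function vanishing on one face of the trapezoid; raising to the power $\tfrac{\alpha}{2}$ or $\tfrac{\beta}{2}$ and invoking Lemma~\ref{lemma:2bubble} on each $D_i$ (with the role of $\Phi$ played by a power of that single linear factor, matching the profile $\psi_F^s$ with $s=1$) gives
\[
\Vert \bE^{\frac{\alpha}{2}} q_\p \Vert_{0,D_i} \le c\, \p^{\beta-\alpha} \Vert \bE^{\frac{\beta}{2}} q_\p \Vert_{0,D_i}.
\]
Summing over the $O(1)$ subdomains $D_i$ then controls $\Vert \bE^{\alpha/2} q_\p\Vert_{0,\E}$ by $c\,\p^{\beta-\alpha}\Vert \bE^{\beta/2} q_\p\Vert_{0,\E}$ on the "boundary layer" near $\partial\E$. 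On the interior complement $\E_0 = \E \setminus \bigcup_i D_i$, the bubble $\bE$ is bounded above and below by strictly positive constants, so $\Vert \bE^{\alpha/2} q_\p\Vert_{0,\E_0} \approx \Vert q_\p\Vert_{0,\E_0} \approx \Vert \bE^{\beta/2} q_\p\Vert_{0,\E_0}$ with no $\p$-dependence at all. Combining the boundary-layer and interior estimates yields~\eqref{inverse:bubble} with the exponent $d(\beta-\alpha)$; a small point is that the power of $\p$ produced is only $\p^{\beta-\alpha}$ per trapezoid, so the stated $\p^{d(\beta-\alpha)}$ is not sharp in this direct argument — one either accepts the slack (harmless since $d(\beta-\alpha)\ge \beta-\alpha$) or iterates the partitioning in each coordinate direction to produce genuinely the $d$-th power, the latter being the route Melenk–Wohlmuth take.

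The main obstacle is the geometric bookkeeping of the partition: one must exhibit, for each of the four element shapes and in both dimensions, an explicit decomposition into trapezoidal pieces together with affine coordinate maps sending each piece to the model trapezoid $D(a,b,d)$ of Lemma~\ref{lemma:2bubble}, and check that under these maps the restriction of $\bE$ does satisfy the two-sided profile bound required as the hypothesis on $\Phi$ — in particular that the "far" affine factors stay uniformly positive on the piece, which relies on shape-regularity (for the reference element this is automatic). Once this combinatorial-geometric step is in place, everything else is a routine assembly of the one-dimensional weighted estimates, and the $\h$- and $\p$-explicitness is inherited directly from Lemmata~\ref{lemma:1D:Jacobi:expansion:1bubble} and~\ref{lemma:2bubble}.
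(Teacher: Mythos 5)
There is a genuine gap, and the student's own observation that they are ``only'' getting $\p^{\beta-\alpha}$ is the symptom of it, not harmless slack. Your boundary-layer decomposition — one trapezoid $D_i$ per face of $\E$ — is too coarse for a simplex. The whole point of the exponent $d(\beta-\alpha)$ is that near a vertex of a triangle or tetrahedron, $d$ (not one) of the affine factors making up $\bE$ degenerate simultaneously. If your trapezoid $D_i$ reaches the vertices of $\E$, then on the corners of $D_i$ the ``remaining affine factors'' you declared bounded below by positive constants in fact vanish, so the two-sided comparison $\bE\approx \ell_i$ that you need as the hypothesis for $\Phi$ in Lemma~\ref{lemma:2bubble} fails there. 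Conversely, if you shrink the trapezoids to keep away from the vertices, then your complement $\E_0=\E\setminus\bigcup_i D_i$ contains neighbourhoods of the vertices, and on those $\bE$ tends to $0$, so the claim $\bE\approx 1$ on $\E_0$ is false. Either way the argument breaks precisely where the stated power of $\p$ is generated.

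The paper's proof avoids this by using a three-part overlapping decomposition of the reference triangle: six trapezoids $D_1,\dots,D_6$ (two per edge, trimmed so as to stay away from the vertices, where Lemma~\ref{lemma:2bubble} applies with a single degenerating linear factor), three parallelograms $P_1,P_2,P_3$ (one per vertex, where the corner degeneration is handled by applying the one-dimensional bound~\eqref{1bubble} in each coordinate direction via a tensor-product argument — this is where the factor $\p^{d(\beta-\alpha)}$ actually comes from), and an interior remainder $R$ on which $\bE\approx 1$. The tetrahedron is handled analogously with parallelepipeds at the vertices, giving the $\p^{3(\beta-\alpha)}$ factor. So you should not expect to do better than $d(\beta-\alpha)$: your ``sharper'' $\p^{\beta-\alpha}$ is an artifact of omitting the vertex regions, and patching the decomposition forces the tensor-product parallelogram/parallelepiped step, which restores the exponent the proposition states. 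As a minor secondary point, the reduction you propose for $-\tfrac12<\alpha<0$ invokes~\eqref{1D:Jacobi:expansion}, but that lemma as stated in the paper requires $0\le\alpha\le\beta$; one needs the $\alpha>-\tfrac12$ version from~\cite{bernardi2001error} directly, which the paper relies on implicitly.
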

\begin{proof}
The proof is similar to that of~\cite[Theorem D2]{melenk2003hp-preprint} and for this reason we only sketch it.
If~$\E$ is a parallelogram or parallelepiped, the result follows from Lemma~\ref{lemma:2bubble}.
		
Suppose~$\E$ is the triangle with vertices $\{(0,0), (1,0), (0,1) \}$. Split~$\E$ into the overlapping subsets
\[	
			\E = \left( \cup_{i=1}^6 D_i   \right) \cup \left(\cup_{i=1}^3 P_i \right) \cup R ,
\]
where~$D_i$, $i=1,\dots, 6$ are the trapezoids depicted in Figure~\ref{figure:trapezoidals}, and $P_i$, $i=1,2,3$, are the parallelograms shown in Figure~\ref{figure:parallelograms}.
The remainder $R$, illustrated in Figure~\ref{figure:remainder}, is separated from $\partial \E$.
The assertion follows by applying Lemma~\ref{lemma:2bubble} on each trapezoid $D_i$, using~\eqref{1bubble} and a tensor product argument on each parallelogram,
and observing that $\bE \approx 1$ on the remainder $R$.
		
When $\E$ is a tetrahedron, we construct a similar overlapping decomposition consisting of parallelepipeds, trapezoidal polyhedra, and a remainder, and apply analogous arguments on each.
The power~$d (\beta-\alpha)$ in~\eqref{inverse:bubble} is due to the application of the tensor product version of inverse estimate~\eqref{1bubble} when dealing with the parallelepipeds.
\begin{figure}
			\centering
			\begin{minipage}{0.30\textwidth}
				\begin{center}
					\begin{tikzpicture}[scale=3.2]
					\draw[black, very thick, -] (0,0) -- (1,0) -- (0,1) -- (0,0);
					\draw[black, -] (5/12,0) -- (17/24, 7/24); \draw[black, -] (7/12,0) -- (19/24, 5/24);
					\draw[black, -] (0,5/12) -- (7/24, 17/24); \draw[black, -] (0, 7/12) -- (5/24, 19/24);
					\fill[orange,opacity=0.2] (5/12,0) -- (17/24, 7/24)  -- (19/24, 5/24) -- (7/12,0) -- (5/12,0);
					\fill[blue,opacity=0.2] (0,5/12) -- (7/24, 17/24)  -- (5/24, 19/24) -- (0, 7/12) -- (0, 5/12);
					\fill[black] (0,5/12) circle(0.025cm); \fill[black] (0,7/12) circle(0.025cm); \fill[black] (5/24,19/24) circle(0.025cm); \fill[black] (7/24,17/24) circle(0.025cm);
					\fill[black] (5/12, 0) circle(0.025cm); \fill[black] (7/12, 0) circle(0.025cm); \fill[black] (17/24, 7/24) circle(0.025cm); \fill[black] (19/24, 5/24) circle(0.025cm);
					\draw (0,5/12) node[black, left] {\tiny{$\left(0,\frac{5}{12}\right)$}}; \draw (0,7/12) node[black, left] {\tiny{$\left(0,\frac{7}{12}\right)$}};
					\draw (5/12,0) node[black, below] {\tiny{$\left(\frac{5}{12}, 0\right)$}}; \draw (7/12,0) node[black, below right] {\tiny{$\left(\frac{7}{12},0\right)$}};
					\draw (19/24, 5/24) node[black, right] {\tiny{$\left(\frac{19}{24},\frac{5}{24}\right)$}}; \draw (17/24,7/24) node[black, above right] {\tiny{$\left(\frac{17}{24},\frac{7}{24}\right)$}};
					\draw (5/24, 19/24) node[black, above right] {\tiny{$\left(\frac{5}{24}, \frac{19}{24}\right)$}}; \draw (7/24,17/24) node[black, right] {\tiny{$\left(\frac{7}{24}, \frac{17}{24}\right)$}};
					\draw(2/9-1/24, 1/2) node[black] {{\footnotesize{$D_1$}}}; \draw(4/8-1/12,1/6-1/24) node[black] {{\footnotesize{$D_2$}}};
					\end{tikzpicture}
				\end{center}
			\end{minipage}
			\begin{minipage}{0.30\textwidth}
				\begin{center}
					\begin{tikzpicture}[scale=3.2]
					\draw[black, very thick, -] (0,0) -- (1,0) -- (0,1) -- (0,0);
					\draw[black, -] (5/12,0) -- (0, 5/24); \draw[black, -] (7/12,0) -- (0, 7/24);
					\draw[black, -] (5/12, 7/12) -- (0, 19/24); \draw[black, -] (7/12, 5/12) -- (0, 17/24);
					\fill[orange,opacity=0.2] (5/12,0) -- (0,5/24)  -- (0,7/24) -- (7/12,0) -- (5/12,0);
					\fill[blue,opacity=0.2] (5/12,7/12) -- (0,19/24)  -- (0,17/24) -- (7/12, 5/12) -- (5/12, 7/12);
					\fill[black] (7/12,5/12) circle(0.025cm); \fill[black] (5/12, 7/12) circle(0.025cm); \fill[black] (0, 5/24) circle(0.025cm); \fill[black] (0, 7/24) circle(0.025cm);
					\fill[black] (5/12, 0) circle(0.025cm); \fill[black] (7/12, 0) circle(0.025cm); \fill[black] (0, 17/24) circle(0.025cm); \fill[black] (0, 19/24) circle(0.025cm);
					\draw (7/12,5/12) node[black, right] {\tiny{$\left(\frac{7}{12},\frac{5}{12}\right)$}}; \draw (5/12,7/12) node[black, right] {\tiny{$\left(\frac{5}{12},\frac{7}{12}\right)$}};
					\draw (5/12,0) node[black, below] {\tiny{$\left(\frac{5}{12}, 0\right)$}}; \draw (7/12,0) node[black, below right] {\tiny{$\left(\frac{7}{12},0\right)$}};
					\draw ((0, 5/24) node[black, below left] {\tiny{$\left(0,\frac{5}{24}\right)$}}; \draw (0,7/24) node[black, left] {\tiny{$\left(0,\frac{7}{24}\right)$}};
					\draw (0, 17/24) node[black, below left] {\tiny{$\left(0, \frac{17}{24}\right)$}}; \draw (0,19/24) node[black, left] {\tiny{$\left(0, \frac{19}{24}\right)$}};
					\draw(1/4,4/8) node[black] {{\footnotesize{$D_3$}}}; \draw(5/16-1/48,1/4-1/24) node[black] {{\footnotesize{$D_4$}}};
					\end{tikzpicture}\end{center}
			\end{minipage}
			\begin{minipage}{0.30\textwidth}
				\begin{center}
					\begin{tikzpicture}[scale=3.2]
					\draw[black, very thick, -] (0,0) -- (1,0) -- (0,1) -- (0,0);
					\draw[black, -] (17/24,0) -- (5/12, 7/12); \draw[black, -] (7/12,5/12) -- (19/24, 0);
					\draw[black, -] (5/24, 0) -- (0, 5/12); \draw[black, -] (0, 7/12) -- (7/24, 0);
					\fill[orange,opacity=0.2] (17/24,0) -- (5/12, 7/12) -- (7/12,5/12) -- (19/24, 0) -- (17/24, 0);
					\fill[blue,opacity=0.2] (5/24, 0) -- (0, 5/12) -- (0, 7/12) -- (7/24, 0) -- (5/24, 0);
					\fill[black] (17/24,0) circle(0.025cm); \fill[black] (5/12, 7/12) circle(0.025cm); \fill[black] (7/12, 5/12) circle(0.025cm); \fill[black] (19/24, 0) circle(0.025cm);
					\fill[black] (5/24, 0) circle(0.025cm); \fill[black] (0, 5/12) circle(0.025cm); \fill[black] (0, 7/12) circle(0.025cm); \fill[black] (7/24, 0) circle(0.025cm);
					\draw (7/12,5/12) node[black, right] {\tiny{$\left(\frac{7}{12},\frac{5}{12}\right)$}}; \draw (5/12,7/12) node[black, right] {\tiny{$\left(\frac{5}{12},\frac{7}{12}\right)$}};
					\draw (5/24,0) node[black, below left] {\tiny{$\left(\frac{5}{24}, 0\right)$}}; \draw (7/24,0) node[black, below] {\tiny{$\left(\frac{7}{24},0\right)$}};
					\draw ((19/24, 0) node[black, below right] {\tiny{$\left(\frac{5}{24}, 0\right)$}}; \draw (0,7/24) node[black, above left] {\tiny{$\left(0,\frac{7}{24}\right)$}};
					\draw (0, 17/24) node[black, below left] {\tiny{$\left(0, \frac{17}{24}\right)$}}; \draw (17/24,0) node[black, below] {\tiny{$\left( \frac{17}{24}, 0\right)$}};
					\draw(5/16,1/6-1/24) node[black] {{\footnotesize{$D_5$}}}; \draw(7/16,3/8) node[black] {{\footnotesize{$D_6$}}};
					\end{tikzpicture}
				\end{center}
			\end{minipage}
			\caption{Trapezoids $D_i$, $i=1,\dots,6$.} \label{figure:trapezoidals}
		\end{figure}
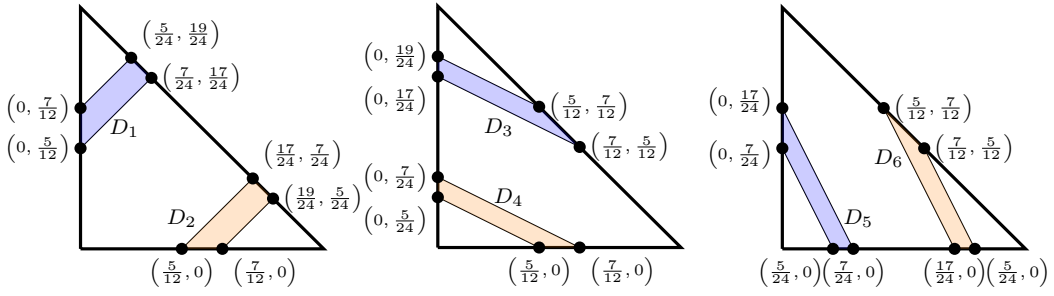
		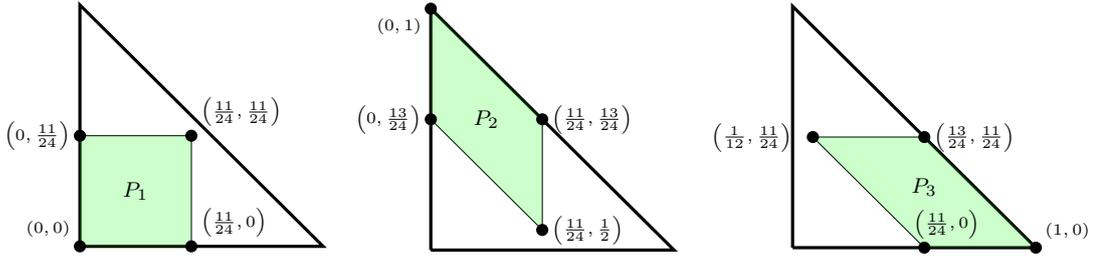
\begin{figure}
			\centering
			\begin{minipage}{0.30\textwidth}
				\begin{center}
					\begin{tikzpicture}[scale=3.2]
					\draw[black, very thick, -] (0,0) -- (1,0) -- (0,1) -- (0,0);
					\draw[black, -] (11/24,0) -- (11/24, 11/24) -- (0,11/24); 
					\fill[green,opacity=0.2] (11/24,0) -- (11/24, 11/24) -- (0,11/24) -- (0,0) -- (11/24,0);
					\fill[black] (11/24,0) circle(0.025cm); \fill[black] (11/24, 11/24) circle(0.025cm); \fill[black] (0,11/24) circle(0.025cm); \fill[black] (0, 0) circle(0.025cm);
					\draw (11/24,0) node[black, above right] {\tiny{$\left(\frac{11}{24}, 0\right)$}}; \draw (11/24,11/24) node[black, above right] {\tiny{$\left(\frac{11}{24},\frac{11}{24}\right)$}};
					\draw (0,11/24) node[black, left] {\tiny{$\left( 0,\frac{11}{24}\right)$}}; \draw (0,0) node[black, above left] {\tiny{$(0, 0)$}};
					\draw(11/48,11/48) node[black] {{\footnotesize{$P_1$}}};
					\end{tikzpicture}
				\end{center}
			\end{minipage}
			\begin{minipage}{0.30\textwidth}
				\begin{center}
					\begin{tikzpicture}[scale=3.2]
					\draw[black, very thick, -] (0,0) -- (1,0) -- (0,1) -- (0,0);
					\draw[black, -] (0, 13/24) -- (11/24, 1/12) -- (11/24, 13/24);
					\fill[green,opacity=0.2] (0, 13/24) -- (11/24, 1/12) -- (11/24, 13/24) -- (0,1) -- (0,13/24);
					\fill[black] (0, 13/24) circle(0.025cm); \fill[black] (11/24, 1/12) circle(0.025cm); \fill[black] (11/24,13/24) circle(0.025cm); \fill[black] (0, 1) circle(0.025cm);
					\draw (11/24,1/12) node[black, right] {\tiny{$\left(\frac{11}{24}, \frac{1}{2}\right)$}}; \draw (11/24,13/24) node[black, right] {\tiny{$\left(\frac{11}{24},\frac{13}{24}\right)$}};
					\draw (0, 1) node[black, below left] {\tiny{$\left( 0, 1\right)$}}; \draw (0, 13/24) node[black, left] {\tiny{$\left( 0, \frac{13}{24} \right)$}};
					\draw(11/48,13/24) node[black] {{\footnotesize{$P_2$}}};
					\end{tikzpicture}\end{center}
			\end{minipage}
			\begin{minipage}{0.30\textwidth}
				\begin{center}
					\begin{tikzpicture}[scale=3.2]
					\draw[black, very thick, -] (0,0) -- (1,0) -- (0,1) -- (0,0);
					\draw[black, -] (13/24,0) -- (1/12, 11/24) -- (13/24, 11/24);
					\fill[green,opacity=0.2] (13/24,0) -- (1/12, 11/24) -- (13/24, 11/24) -- (1,0) -- (13/24,0);
					\fill[black] (13/24, 0) circle(0.025cm); \fill[black] (1/12, 11/24) circle(0.025cm); \fill[black] (13/24,11/24) circle(0.025cm); \fill[black] (1, 0) circle(0.025cm);
					\draw (11/24, 0) node[black, above right] {\tiny{$\left(\frac{11}{24}, 0 \right)$}}; \draw (13/24,11/24) node[black, right] {\tiny{$\left(\frac{13}{24},\frac{11}{24}\right)$}};
					\draw (1,0) node[black, above right] {\tiny{$\left( 1, 0\right)$}}; \draw (1/12, 11/24) node[black, left] {\tiny{$\;\;\left( \frac{1}{12}, \frac{11}{24} \right)\;\;$}};
					\draw(7/12-1/24, 1/4) node[black] {{\footnotesize{$P_3$}}};
					\end{tikzpicture}
				\end{center}
			\end{minipage}
			\caption{Parallelograms $P_i$, $i=1,2,3$. } \label{figure:parallelograms}
		\end{figure}
		
		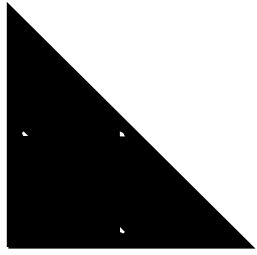
\begin{figure}
			\centering
			\begin{minipage}{0.30\textwidth}
				\begin{center}
					\begin{tikzpicture}[scale=3.2]
					\draw[black, very thick, -] (0,0) -- (1,0) -- (0,1) -- (0,0);
					\fill[black] (5/12,0) -- (17/24, 7/24)  -- (19/24, 5/24) -- (7/12,0) -- (5/12,0);
					\fill[black] (0,5/12) -- (7/24, 17/24)  -- (5/24, 19/24) -- (0, 7/12) -- (0, 5/12);
					\fill[black] (5/12,0) -- (0,5/24)  -- (0,7/24) -- (7/12,0) -- (5/12,0);
					\fill[black] (5/12,7/12) -- (0,19/24)  -- (0,17/24) -- (7/12, 5/12) -- (5/12, 7/12);
					\fill[black] (17/24,0) -- (5/12, 7/12) -- (7/12,5/12) -- (19/24, 0) -- (17/24, 0);
					\fill[black] (5/24, 0) -- (0, 5/12) -- (0, 7/12) -- (7/24, 0) -- (5/24, 0);
					\fill[black] (11/24,0) -- (11/24, 11/24) -- (0,11/24) -- (0,0) -- (11/24,0);
					\fill[black] (0, 13/24) -- (11/24, 1/12) -- (11/24, 13/24) -- (0,1) -- (0,13/24);
					\fill[black] (13/24,0) -- (1/12, 11/24) -- (13/24, 11/24) -- (1,0) -- (13/24,0);
					\end{tikzpicture}
				\end{center}
			\end{minipage}
			\caption{The three white ``small holes'' inside triangle $\E$ denote the remainder $R$.} \label{figure:remainder}
		\end{figure}
	\end{proof}

In Proposition~\ref{proposition:inverse-bubble}, we deduced the same $\h\p$-polynomial inverse estimate as in~\cite[Proposition~3.45]{verfurth2013posteriori-book}.
However, our proof extends to polygonal and polyhedral elements as well.

	The following $\h\p$-explicit polynomial weighted inverse estimate is proven in~\cite[Propositions~3.85, 3.86]{verfurth2013posteriori-book} with explicit constants in both 2D and 3D.
	\begin{prop}[$hp$-explicit polynomial weighted~$H^1$ to~$L^2$ inverse estimate] \label{proposition:weighted-inverse-H1L2}
Let~$\E$ be a triangle, parallelogram, a tetrahedron or parallelepiped with diameter~$\h$.
Then, there exists a constant $c > 0$, independent of~$\h$ and~$\p$, such that
		for all $\qp \in \mathbb P_\p(\E)$ if $\E$ is a triangle or tetrahedron, and for all $\qp \in \mathbb Q_\p(\E)$ if $\E$ is a parallelogram or parallelepiped,
		\begin{equation} \label{inverse:weighted-H1L2}
			\Vert \nabla (\bE^{} \qp) \Vert_{0,\E} \le c \frac{\p}{\h} \Vert \qp \Vert_{0,\E}.
		\end{equation}
	\end{prop}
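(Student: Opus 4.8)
After rescaling to a reference element $\widehat\E$ of unit diameter --- under which $\nabla$ contributes a factor $\h^{-1}$, the $L^2$ norm a factor $\h^{d/2}$ on both sides, and $\bE$ becomes, up to a factor bounded by the shape-regularity constants, the product of the barycentric coordinates of a simplex or $\prod_{j=1}^{d}(1-x_j^2)$ on a tensor-product element (so that $0\le\bE\le1$) --- estimate~\eqref{inverse:weighted-H1L2} reduces to the scale-free bound $\Vert\nabla(\bE\,\qp)\Vert_{0,\widehat\E}\le c\,\p\,\Vert\qp\Vert_{0,\widehat\E}$ for all $\qp\in\mathbb P_\p(\widehat\E)$ (resp.\ $\mathbb Q_\p(\widehat\E)$). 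Splitting $\nabla(\bE\qp)=\qp\,\nabla\bE+\bE\,\nabla\qp$, the first term is harmless, since $\bE$ is a fixed polynomial and hence $\Vert\qp\,\nabla\bE\Vert_{0,\widehat\E}\le\Vert\nabla\bE\Vert_{L^\infty(\widehat\E)}\Vert\qp\Vert_{0,\widehat\E}$, with no power of $\p$. As $\bE\le\bE^{1/2}$, the second term satisfies $\Vert\bE\,\nabla\qp\Vert_{0,\widehat\E}\le\Vert\bE^{1/2}\,\nabla\qp\Vert_{0,\widehat\E}$, so it remains to establish the weighted gradient inverse estimate $\Vert\bE^{1/2}\,\partial_i\qp\Vert_{0,\widehat\E}\le c\,\p\,\Vert\qp\Vert_{0,\widehat\E}$ for each coordinate direction $i$.

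The engine is the one-dimensional weighted inequality, valid for any $r\in\mathbb P_\p(0,L)$,
\[
\int_0^{L} x\,(L-x)\,|r'(x)|^2\,dx\ \le\ \p(\p+1)\int_0^{L}|r(x)|^2\,dx ,
\]
which is the derivative-weighted companion of~\eqref{1D:Jacobi:expansion} and follows, after affine rescaling to $(-1,1)$, from the self-adjoint form of the Legendre equation $\bigl((1-t^2)P_k'\bigr)'=-k(k+1)P_k$ and the $L^2(-1,1)$-orthogonality of the Legendre polynomials (the boundary term produced by the integration by parts vanishes because $1-t^2$ does at $\pm1$). I would then foliate $\widehat\E$ by line segments parallel to $e_i$: almost every such segment meets $\partial\widehat\E$ in two points belonging to two distinct faces, so exactly two of the barycentric factors making up $\bE$ vanish on it, one at each endpoint, while all remaining factors are constant along the segment. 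Restricted to such a segment $[0,L]$, $\bE$ therefore equals $x(L-x)$ times a constant lying in $[0,1]$ (up to an affine reparametrisation); applying the displayed inequality fibrewise, discarding the constant factor, and integrating over the foliation recovers $\Vert\qp\Vert_{0,\widehat\E}^2$ on the right-hand side, which proves $\Vert\bE^{1/2}\partial_i\qp\Vert_{0,\widehat\E}\le c\,\p\,\Vert\qp\Vert_{0,\widehat\E}$. The tensor-product case is identical, with the factor $1-x_i^2$ in the role of $x(L-x)$ and the factors $1-x_j^2$, $j\ne i$, bounded by $1$; the 3D case needs no change.

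The one point demanding care is that the power of $\p$ is exactly $1$, and this hinges on using the inequality with the \emph{two-sided} weight $x(L-x)$ --- legitimate precisely because along any coordinate segment the two vanishing barycentric factors sit at the two opposite endpoints. Keeping instead only the single linearly-vanishing factor near a face, that is, a one-sided weight $x$ acting on a derivative in the normal direction, would degrade the estimate to $\p^{3/2}$, since the best constant $C_\p$ in $\int_0^{1}x\,|r'(x)|^2\,dx\le C_\p\int_0^{1}|r(x)|^2\,dx$ grows like $\p^3$, not $\p^2$. A pleasant consequence of working fibrewise is that, unlike in Proposition~\ref{proposition:inverse-bubble}, no overlapping polygonal decomposition of $\widehat\E$ is required: the weighted gradient estimate is obtained from a single application of the one-dimensional Legendre identity along one family of parallel segments.
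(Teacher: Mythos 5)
The paper does not prove this proposition itself: it simply defers to Verf\"urth's monograph (Propositions~3.85 and~3.86 there) for the result with explicit constants. Your argument is therefore an independent, self-contained proof, and it is correct. The reduction to the reference element, the product-rule split $\nabla(\bE\qp)=\qp\,\nabla\bE+\bE\,\nabla\qp$, the replacement of $\bE$ by $\bE^{1/2}$ (legitimate since $0\le\bE\le 1$ on the reference element, so the first term costs no power of $\p$ and $\bE\le\bE^{1/2}$ pointwise), and the fibrewise passage to the one-dimensional Legendre bound
\[
\int_{-1}^{1}(1-t^2)\,|r'(t)|^2\,dt\ \le\ \p(\p+1)\int_{-1}^{1}|r(t)|^2\,dt,
\]
are all sound; the last follows, exactly as you say, by writing $r$ in the Legendre basis, integrating by parts against the self-adjoint form $\bigl((1-t^2)P_k'\bigr)'=-k(k+1)P_k$, and invoking orthogonality. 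The geometric observation that drives the fibrewise argument --- that along any segment through the reference simplex parallel to a coordinate axis exactly two barycentric factors vanish, one at each endpoint, and the remaining ones are constants in $[0,1]$ --- is correct for the standard reference triangle and tetrahedron with a vertex at the origin, and it is precisely this that puts the two-sided weight $x(L-x)$ in front of $|r'|^2$ and yields the sharp power $\p^1$. The tensor-product case is even more transparent, since only the factor $1-x_i^2$ varies along the fibre. Your closing comparison to Proposition~\ref{proposition:inverse-bubble} is apt: no overlapping polygonal decomposition is needed here because the derivative only sees the two barycentric factors that actually vanish along the fibre.

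One small reservation on the parenthetical remark at the end: the claim that the best constant $C_\p$ in $\int_0^1 x\,|r'(x)|^2\,dx\le C_\p\int_0^1|r(x)|^2\,dx$ grows like $\p^3$ is not obviously correct. Quick tests with shifted Chebyshev or Legendre polynomials give a ratio of order $\p^2$, not $\p^3$. This is immaterial to the proposition --- your proof uses the two-sided weight throughout, which is what actually produces the optimal $\p$ --- but the assertion as written would need a reference or its own derivation before being relied upon.
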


Next, we prove an $\h\p$-explicit polynomial extension stability result.
\begin{prop}[$\varepsilon$-weighted $\h\p$-explicit polynomial extension stability result] \label{proposition:extension-inverse}
Let $\E$ be a shape-regular triangle, parallelogram, tetrahedron, or parallelepiped with diameter $\hE$, and let $\F$ be a face of $\E$.
The shape-regularity implies that~$\hF$ scales like~$\hE$.
Associated with face~$\F$, define
		\[
		\Phi_{\F} = \begin{cases}
		\bF & \text{if } \E \text{ is a triangle/tetrahedron}\\
		1 & \text{if } \E \text{ is a parallelogram/parallelepiped},
		\end{cases}
		\]
where~$\bF$ denotes the standard bubble function associated with face~$\F$.
Then, in 2D and in 3D when~$\F$ is a triangle,
there exists an extension operator $\Eit : \mathbb P_{\p}(\F) \rightarrow H^2(\E)$ such that, for all $\qp \in \mathbb P_\p(\F)$ and for all $\varepsilon>0$ sufficiently small, there exists a constant $c > 0$, independent of~$\h$ and~$\p$, such that
\begin{align}
& \Eit(\qp){}_{|\F} = \Phi_{\F} \qp{}_{|\F} \quad \text{on }\F,  \label{extension-1} \\
& \Vert \Eit(\qp) \Vert_{0,\E} \le c\hE^{\frac{1}{2}}\varepsilon^{\frac{1}{2}} \Vert \qp \Vert_{0,\F} ,\label{extension-2}\\
& \vert \Eit(\qp) \vert_{1,\E} \le c \hE^{-\frac{1}{2}}(\varepsilon \p^4 + \varepsilon^{-1})^{\frac{1}{2}} \Vert \qp \Vert_{0,\F}, \label{extension-3} \\
& \Vert D^2 \Eit(\qp) \Vert_{0,\E} \le c \hE^{-\frac{3}{2}} (\varepsilon \p^8 + \varepsilon^{-3} + \varepsilon^{-1} \p^4) ^{\frac{1}{2}} \Vert \qp \Vert_{0,\F}.   \label{extension-4}
		\end{align}
In 3D when~$\F$ is a parallelogram, the above bounds are valid substituting~$\mathbb P_\p(\F)$ with~$\mathbb Q_\p(\F)$.
\end{prop}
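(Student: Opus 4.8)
The plan is to construct $\Eit$ explicitly on a reference element and then transfer the four estimates to $\E$ by affine scaling. Let $G_\E : \widehat{\E} \to \E$ be the affine bijection from the reference triangle, square, tetrahedron, or cube $\widehat{\E}$, chosen so that the prescribed face $\F$ is the image of a reference face $\widehat{\F}$ and, in the resulting coordinates $(\xbf', z)$, one has $\widehat{\F} \subset \{z = 0\}$ and $\widehat{\E} \subseteq \widehat{\F} \times (0,1)$ --- the latter containment holding, with the appropriate choice of the coordinate $z$, for any face of the reference simplex and trivially for the reference cube. Shape-regularity gives $\Vert DG_\E\Vert \le c\hE$, $\Vert DG_\E^{-1}\Vert \le c\hE^{-1}$, $|\det DG_\E| \approx \hE^d$ and a face Jacobian scaling as $\hE^{d-1}$; hence, for $w$ on $\widehat{\E}$ and $\widehat{\qp}$ on $\widehat{\F}$,
\[
\Vert w\circ G_\E^{-1}\Vert_{0,\E} \approx \hE^{\frac{d}{2}}\Vert w\Vert_{0,\widehat{\E}}, \quad \vert w\circ G_\E^{-1}\vert_{1,\E} \approx \hE^{\frac{d-2}{2}}\vert w\vert_{1,\widehat{\E}}, \quad \Vert D^2(w\circ G_\E^{-1})\Vert_{0,\E} \approx \hE^{\frac{d-4}{2}}\Vert D^2 w\Vert_{0,\widehat{\E}},
\]
together with $\Vert\widehat{\qp}\circ(G_\E|_\F)^{-1}\Vert_{0,\F} \approx \hE^{\frac{d-1}{2}}\Vert\widehat{\qp}\Vert_{0,\widehat{\F}}$. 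Matching the powers of $\hE$, the bounds \eqref{extension-2}--\eqref{extension-4} then reduce to showing that the reference extension $\widehat{\Eit}$ satisfies $\Vert\widehat{\Eit}(\widehat{\qp})\Vert_{0,\widehat{\E}}^2 \le c\,\varepsilon\Vert\widehat{\qp}\Vert_{0,\widehat{\F}}^2$, $\vert\widehat{\Eit}(\widehat{\qp})\vert_{1,\widehat{\E}}^2 \le c(\varepsilon\p^4 + \varepsilon^{-1})\Vert\widehat{\qp}\Vert_{0,\widehat{\F}}^2$ and $\Vert D^2\widehat{\Eit}(\widehat{\qp})\Vert_{0,\widehat{\E}}^2 \le c(\varepsilon\p^8 + \varepsilon^{-3} + \varepsilon^{-1}\p^4)\Vert\widehat{\qp}\Vert_{0,\widehat{\F}}^2$.

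For the reference extension I would fix once and for all a cut-off $\theta \in \mathcal C^\infty(\mathbb R)$ with $\theta(0) = 1$, $0 \le \theta \le 1$ and $\operatorname{supp}\theta \subset [0,1]$, set $\theta_\varepsilon(z) := \theta(z/\varepsilon)$, and record that for $\varepsilon$ sufficiently small a change of variables gives
\[
\Vert\theta_\varepsilon\Vert_{0,(0,1)}^2 = c_0\,\varepsilon, \qquad \Vert\theta_\varepsilon'\Vert_{0,(0,1)}^2 = c_1\,\varepsilon^{-1}, \qquad \Vert\theta_\varepsilon''\Vert_{0,(0,1)}^2 = c_2\,\varepsilon^{-3},
\]
with $c_0, c_1, c_2 > 0$ depending only on $\theta$. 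Writing $\widehat{\Phi} := \widehat{\bF}$ (the reference face bubble) when $\widehat{\E}$ is a simplex and $\widehat{\Phi} := 1$ otherwise, and setting $\widehat{\qp} := \qp\circ(G_\E|_\F)$, which lies in $\mathbb P_\p(\widehat{\F})$ or $\mathbb Q_\p(\widehat{\F})$, I would define $\widehat{\Eit}(\widehat{\qp})(\xbf', z) := \widehat{\Phi}(\xbf', z)\,\widehat{\qp}(\xbf')\,\theta_\varepsilon(z)$ and finally $\Eit(\qp) := \widehat{\Eit}(\widehat{\qp})\circ G_\E^{-1}$ (possibly depending on $\varepsilon$). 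This map is linear and belongs to $\mathcal C^\infty(\overline{\E}) \subset H^2(\E)$; since $\theta_\varepsilon(0) = 1$ while barycentric coordinates --- and hence $\bF$ --- are affine invariant, it satisfies \eqref{extension-1}.

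To derive the three reference bounds I would set $\widehat w(\xbf', z) := \widehat{\qp}(\xbf')\theta_\varepsilon(z)$, so that $\widehat{\Eit}(\widehat{\qp}) = \widehat{\Phi}\,\widehat w$. Using $\widehat{\E} \subseteq \widehat{\F}\times(0,1)$, Fubini's theorem, the displayed identities for $\theta_\varepsilon$, and the $\h\p$-inverse estimate \eqref{inverse:H1L2} (and its one-dimensional analogue) applied iteratively on $\widehat{\F}$ to obtain $\vert\widehat{\qp}\vert_{j,\widehat{\F}} \le c\,\p^{2j}\Vert\widehat{\qp}\Vert_{0,\widehat{\F}}$ for $j = 1,2$, I would get $\Vert\widehat w\Vert_{0,\widehat{\E}}^2 \le c\,\varepsilon\Vert\widehat{\qp}\Vert_{0,\widehat{\F}}^2$, $\vert\widehat w\vert_{1,\widehat{\E}}^2 \le c(\varepsilon\p^4 + \varepsilon^{-1})\Vert\widehat{\qp}\Vert_{0,\widehat{\F}}^2$ and $\Vert D^2\widehat w\Vert_{0,\widehat{\E}}^2 \le c(\varepsilon\p^8 + \varepsilon^{-1}\p^4 + \varepsilon^{-3})\Vert\widehat{\qp}\Vert_{0,\widehat{\F}}^2$, the mixed second derivatives being responsible for the $\varepsilon^{-1}\p^4$ term. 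For tensor-product $\widehat{\E}$ this already gives the reference bounds since $\widehat{\Phi} = 1$; for simplices, $\widehat{\bF}$ together with its first two derivatives is bounded on $\widehat{\E}$, so the Leibniz rule $D^2(\widehat{\Phi}\widehat w) = (D^2\widehat{\Phi})\widehat w + \nabla\widehat{\Phi}\otimes\nabla\widehat w + \nabla\widehat w\otimes\nabla\widehat{\Phi} + \widehat{\Phi}\,D^2\widehat w$, combined with $\varepsilon^{1/2} \le (\varepsilon\p^4 + \varepsilon^{-1})^{1/2} \le (\varepsilon\p^8 + \varepsilon^{-3} + \varepsilon^{-1}\p^4)^{1/2}$ (valid once $\p \ge 1$ and $\varepsilon \le 1$), shows that the remaining bubble contributions are absorbed into the claimed bounds; scaling back through $G_\E$ then finishes the proof.

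I do not expect a conceptual obstacle here, as this is a classical anisotropic boundary-layer extension. The step requiring the most care is the bookkeeping: tuning the width of the layer so that differentiation in the normal direction produces exactly the negative powers $\varepsilon^{-1}$ and $\varepsilon^{-3}$ while the $O(\varepsilon)$ support produces the positive power $\varepsilon$, and verifying that the polynomial bubble factor present on simplices --- absent in the tensor-product case --- contributes only terms that are genuinely dominated by the right-hand sides of \eqref{extension-2}--\eqref{extension-4}.
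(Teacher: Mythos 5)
Your proposal is correct and follows essentially the same strategy as the paper's proof: both build an explicit anisotropic boundary-layer extension on a reference element, multiply the polynomial on the face by a profile of width $\varepsilon$ in the normal direction times a face-bubble factor on simplices, and transfer the three estimates to $\E$ by affine scaling. The only differences are cosmetic: you use a fixed compactly supported smooth cutoff $\theta(z/\varepsilon)$ where the paper uses the exponential profile $e^{-y/\varepsilon}$ (multiplied by $(1-y)$ on the square), and you handle the simplicial bubble factor $\widehat{\Phi}$ via a clean Leibniz-rule argument combined with the domination $\varepsilon^{1/2} \le (\varepsilon \p^4 + \varepsilon^{-1})^{1/2} \le (\varepsilon \p^8 + \varepsilon^{-3} + \varepsilon^{-1}\p^4)^{1/2}$, whereas the paper expands the derivatives of the composite function directly on each geometry. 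Both routes produce the same $\varepsilon^{-1}\p^4$ term from the mixed second derivatives (incidentally, the paper's displayed bound for $\Vert \partial_{xy} \Eit(\qp)\Vert^2_{0,\Ehat}$ on the square contains a typo, reading $\varepsilon$ where it should read $\varepsilon^{-1}$, but the statement of the proposition is unaffected). Your bookkeeping --- the inclusion $\widehat{\E}\subseteq \widehat{\F}\times(0,1)$ plus Fubini, the iterated face inverse estimate $|\widehat{\qp}|_{j,\widehat{\F}}\le c\,\p^{2j}\Vert\widehat{\qp}\Vert_{0,\widehat{\F}}$, and the $\hE$-scaling of the $L^2$, $H^1$, $H^2$ and face-$L^2$ norms --- is correct, and the resulting powers of $\hE$, $\varepsilon$, and $\p$ match the claimed bounds.
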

	\begin{proof}
We suppose that $\E = \Ehat$ is the reference element with diameter $\h_{\Ehat}=1$, and that $\F=\Fhat$ has size $1$. 
		The general case then follows by a scaling argument.
		
		When $\Ehat$ is a parallelogram, the proof is based on that of~\cite[Lemma~2.6]{MelenkWohlmuth_hpFEMaposteriori}.
		For the sake of completeness, we sketch the proof when $\Ehat = [0,1]^2$ and $\Fhat = [0,1] \times \{0\}$.
		
Introducing the extension operator
\[
\Eit(\qp) = \qp (1-y) e^{-\frac{y}{\varepsilon}},
\]
the properties~\eqref{extension-1} and~\eqref{extension-2} are immediate. To show~\eqref{extension-3}, Lemma~\ref{proposition:weighted-inverse-H1L2} implies that
\[
\Vert \partial _x \Eit(\qp) \Vert^2_{0,\Ehat} = \Vert \partial _x \qp \Vert^2_{0,\Fhat} \Vert (1-y) e^{-\frac{y}{\varepsilon}}\Vert ^2_{0,[0,1]} \le c \p^4 \varepsilon \Vert \qp \Vert^2_{0,\Fhat},
\]
and moreover
\[
		\Vert \partial _y \Eit(\qp) \Vert^2_{0,\Ehat} 	\le c \Vert \qp \Vert^2_{0,\Fhat} \left( \Vert e^{-\frac{y}{\varepsilon}} \Vert^2_{0,[0,1]} + \frac{1}{\varepsilon^2} \Vert e^{-\frac{y}{\varepsilon}} \Vert^2_{0,[0,1]} \right)
		\le c \varepsilon ^{-1} \Vert \qp \Vert^2_{0,\Fhat} .
\]
We prove~\eqref{extension-4} analogously, observing that Lemma~\ref{proposition:weighted-inverse-H1L2} provides
\[
\Vert \partial _{xx} \Eit(\qp) \Vert^2_{0,\Ehat} \le c \p^8 \varepsilon \Vert \qp \Vert^2_{0,\Fhat},
\]
and we directly obtain
\[
		\Vert \partial _{yy} \Eit(\qp) \Vert^2_{0,\Ehat} 	=   \Vert \qp \Vert^2_{0,\Fhat} \Vert \partial _{yy} ((1-y) e^{-\frac{y}{\varepsilon}})\Vert ^2_{0,[0,1]}
		\le c \varepsilon^{-3} \Vert \qp \Vert^2_{0,\Fhat},
\]
and
\[
		\Vert \partial _{xy} \Eit(\qp) \Vert^2_{0,\Ehat}  	= \Vert \partial _x \qp \Vert^2_{0,\Fhat}  \Vert \partial _{y} ((1-y) e^{-\frac{y}{\varepsilon}})\Vert ^2_{0,[0,1]}
		\le c \p^4 \varepsilon \Vert \qp \Vert^2_{0,\Fhat}.
\]
		Next, suppose $\Ehat$ is the triangle with vertices $\{(0,0), (1,0), (0,1)\}$, and,
		without loss of generality, take $\Fhat = [0,1] \times \{0\}$.
		Defining the extension operator
\[
			\Eit(\qp) := \qp x (1-x-y) e^{-\frac{y}{\varepsilon}},
\]
property~\eqref{extension-1} is once again immediate and~\eqref{extension-2} is valid because
\[
\Vert \Eit (\qp) \Vert^2_{0,\Ehat} 	 = \int_0 ^1 \int_0 ^{1-x} \qp(x)^2 x^2 (1-x-y)^2 e^{-2\frac{y}{\varepsilon}}\dy\, \dx \le \int_0^1 \qp^2(x)\dx \int_{0}^1 e^{-2\frac{y}{\varepsilon}} \dy \le c \varepsilon \Vert \qp \Vert^2_{0,\Fhat}.
\]
To show~\eqref{extension-3}, denote the derivative of~$\qp$ with respect to the local coordinate system on $\Fhat$ by~$\qp'$.
Expanding the integral as before and applying Lemma~\ref{prop:inverse-trace} implies that
\[
\begin{split}
\Vert \partial _x \Eit(\qp) \Vert^2_{0,\Ehat} 	& \le \int_0^1 \qp'(x)^2  	\int_0^{1-x} e^{-2\frac{y}{\varepsilon}}  \dy \, \dx + 	2\int_0^1 \qp(x)^2 \int_0^{1-x} e^{-2\frac{y}{\varepsilon}} \dy  \, \dx\\
														& \le c \varepsilon (\Vert \qp'\Vert^2_{0,\Fhat} + \Vert \qp \Vert^2_{0,\Fhat}) \le  c \varepsilon \p^4 \Vert \qp \Vert^2_{0,\Fhat},
\end{split}
\]
and similarly
\[
\begin{split}
\Vert \partial _y \Eit(\qp) \Vert^2_{0,\Ehat} 	& \le 	\int_0^1 (1 + \varepsilon^{-2})	\qp(x)^2  \int_0^{1-x} e^{-2\frac{y}{\varepsilon}}  \dy \,\dx \le c (\varepsilon + \varepsilon^{-1}) \Vert \qp\Vert^2_{0,\Fhat} \le  c \varepsilon^{-1} \Vert \qp \Vert^2_{0,\Fhat}.
\end{split}
\]
Finally, to prove~\eqref{extension-4} we note that
\[
\begin{split}
\Vert \partial_{xx} \Eit (\qp) \Vert^2_{0,\Ehat}	& \le c \int_0^1 (\qp''(x) ^2 + \qp'(x) ^2 +\qp(x) ^2) \int_{0}^{1-x} e^{-2\frac{y}{\varepsilon}}\dy\,\dx \\
															& \le c \varepsilon  \left( \p^8 + \p^4 + 1 \right)  \Vert \qp \Vert^2_{0,\Fhat}  \le c \varepsilon \p^8 \Vert \qp \Vert^2_{0,\Fhat},
\end{split}
\]
and
\[
\begin{split}
\Vert \partial_{yy} \Eit (\qp) \Vert^2_{0,\Ehat}	& \le c \int_0^1 \qp(x) ^2 \int_{0}^{1-x} ( \varepsilon^{-2}(1-x-y)e^{-\frac{y}{\varepsilon}})^2  \dy  \dx \le c \varepsilon ^{-3} \Vert \qp \Vert^2_{0,\Fhat}.
\end{split}
\]
Similarly,
\[
\begin{split}
\Vert \partial_{xy} \Eit (\qp) \Vert^2_{0,\Ehat} 	& \le c \int_0^1 \int_{0}^{1-x}\big( (1 + \varepsilon^{-1} (1-x-y)) (q(x) + x q'(x)) + \varepsilon^{-1} x q(x) \big)^2e^{-2 \frac{y}{\varepsilon}}\dy\dx\\
															& \le c \varepsilon ^{-1}  \big( \p^4 \Vert \qp \Vert^2_{0,\Fhat} + \Vert \qp \Vert^2_{0,\Fhat}    \big).
\end{split}
\]
and the assertion follows. 
The 3D case follows by extending the arguments above.
\end{proof}
	
Selecting $\varepsilon=\p^{-2}$, the following result is an immediate consequence of Proposition~\ref{proposition:extension-inverse}.
\begin{cor}[$\h\p$-polynomial extension stability result] \label{corollary:extension-inverse}
Using the same notation and under the same assumptions as in Proposition~\ref{proposition:extension-inverse},
there exists an extension operator $\Eit : \mathbb P_{\p}(\F) \rightarrow H^2(\E)$ such that there exists a constant $c > 0$, independent of~$\h$ and~$\p$, such that, for all $\qp \in \mathbb P_\p(\F)$,
\begin{align} 
& \Eit(\qp) {}_{|\F} = \Phi_{\F} \qp{}_{|\F} \quad \text{on }\F,	\\
& \Vert \Eit(\qp) \Vert_{0,\E} + \hE^{1} \p^{-2} \vert \Eit(\qp) \vert_{1,\E} +\hE^{2} \p^{-4} 	\Vert D^2 \Eit(\qp) \Vert_{0,\E} \leq	c \hE^{\frac{1}{2}} \p^{-1} \Vert \qp \Vert_{0,\F}. \label{eq:extensionoperator}
\end{align}
In 3D when~$\F$ is a parallelogram, the space~$\mathbb P_\p(\F)$ is replaced by $\mathbb Q_\p(\F)$.
\end{cor}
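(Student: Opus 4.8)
The plan is to obtain Corollary~\ref{corollary:extension-inverse} as a direct specialisation of Proposition~\ref{proposition:extension-inverse} by choosing the free parameter~$\varepsilon$ optimally. The operator~$\Eit$, property~\eqref{extension-1}, and the three stability bounds~\eqref{extension-2}--\eqref{extension-4} are already in hand; all that remains is to balance the powers of~$\p$ that appear with $\varepsilon$ and $\varepsilon^{-1}$ on the right-hand sides, and to repackage the three estimates into the single weighted inequality~\eqref{eq:extensionoperator}.

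First I would examine~\eqref{extension-4}, the most $\p$-sensitive bound, whose right-hand side scales like $(\varepsilon \p^8 + \varepsilon^{-3} + \varepsilon^{-1}\p^4)^{1/2}$. Setting $\varepsilon = \p^{-2}$ makes the three terms inside the bracket equal to $\p^6$, $\p^6$, and $\p^6$ respectively, so the bound becomes $c\hE^{-3/2}\p^3\Vert\qp\Vert_{0,\F}$; multiplying by $\hE^2\p^{-4}$ gives the contribution $c\hE^{1/2}\p^{-1}\Vert\qp\Vert_{0,\F}$ to the left-hand side of~\eqref{eq:extensionoperator}. Then I would check the same choice $\varepsilon = \p^{-2}$ in~\eqref{extension-3}: there the bracket $\varepsilon\p^4 + \varepsilon^{-1}$ becomes $\p^2 + \p^2$, so $\vert\Eit(\qp)\vert_{1,\E} \le c\hE^{-1/2}\p\,\Vert\qp\Vert_{0,\F}$, and multiplying by $\hE\p^{-2}$ again yields $c\hE^{1/2}\p^{-1}\Vert\qp\Vert_{0,\F}$. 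Finally~\eqref{extension-2} with $\varepsilon = \p^{-2}$ reads $\Vert\Eit(\qp)\Vert_{0,\E} \le c\hE^{1/2}\p^{-1}\Vert\qp\Vert_{0,\F}$ outright. Adding the three bounds and absorbing the three constants into one~$c$ gives exactly~\eqref{eq:extensionoperator}.

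A minor point worth verifying is that the phrase ``for all $\varepsilon > 0$ sufficiently small'' in Proposition~\ref{proposition:extension-inverse} is not an obstruction: the choice $\varepsilon = \p^{-2}$ is admissible for all $\p \ge 1$ large enough, and the finitely many small values of~$\p$ can be absorbed into the constant~$c$ by the equivalence of norms on finite-dimensional spaces. The dependence on whether~$\F$ is a triangle or a parallelogram, and whether~$\mathbb{P}_\p(\F)$ or~$\mathbb{Q}_\p(\F)$ is used, is inherited verbatim from Proposition~\ref{proposition:extension-inverse}, so no further work is needed there.

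Since every ingredient is already available, there is no real obstacle here; the only thing to get right is the arithmetic of the exponents, and in particular confirming that $\varepsilon = \p^{-2}$ simultaneously optimises (up to constants) all three of~\eqref{extension-2}, \eqref{extension-3}, and~\eqref{extension-4}. One could remark that a different scaling of~$\varepsilon$ would shift the $\p$-powers between the three terms, but $\varepsilon = \p^{-2}$ is the choice that produces the clean, uniformly weighted statement~\eqref{eq:extensionoperator} used later in Section~\ref{section:apos}.
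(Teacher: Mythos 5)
Your proposal is correct and matches the paper's own (one-line) derivation exactly: the paper also takes $\varepsilon = \p^{-2}$ in Proposition~\ref{proposition:extension-inverse} and reads off the three bounds, which the arithmetic you carry out confirms. Your remark about the ``sufficiently small'' caveat is a sensible precaution, though in this paper $\pE \geq 2$ throughout so $\varepsilon = \p^{-2} \leq 1/4$ is automatic.
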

	
\section{Error estimator and a posteriori error analysis} \label{section:apos}
In this section, we introduce a computable error estimator, which provides an upper bound and a local lower bound on the error measured in the dG norm in~\eqref{dG:norm}.
These are the results of Theorems~\ref{theorem:reliability} and~\ref{theorem:efficiency} below, respectively.
\begin{definition}[Error estimator]\label{def:estimator}
We introduce the error estimator
\[
\eta^2 := \sum_{\E \in \taun} \etaE^2 \quad\text{ with }\quad \etaE^2 := \eta_{\E,1}^2+ \eta_{\E,2}^2+ \eta_{\E,3}^2+ \eta_{\E,4}^2+ \eta_{\E,5}^2+ \eta_{\E,6}^2,
\]
where
\begin{align*} 
&\eta_{\E,1}^2 := \Big \Vert \Big( \frac{\hbf}{\pbf} \Big)^2 (\f - \Delta^2 \un) \Big \Vert^2_{0,\E},	\quad 
&&	\eta_{\E,2}^2 := \frac{1}{2} \sum_{\F \in \EE \cap \EnI}  	\Big \Vert \Big( \frac{\hbf}{\pbf} \Big)^{\frac{3}{2}} \llbracket \nbf \cdot \nabla \Delta \un \rrbracket \Big\Vert^2_{0, \F},\\
& \eta_{\E,3}^2 := \frac{1}{2} \sum_{\F \in \EE \cap \EnI}  	\Big \Vert \Big( \frac{\hbf}{\pbf} \Big)^{\frac{1}{2}} \llbracket (\D^2 \un) \nbf \rrbracket 	\Big\Vert^2_{0,\F},\quad
&& \eta_{\E,4}^2	:= \frac{1}{2} \sum_{\F \in \EE}  \alphabold_{\F} \Big \Vert \Big( \frac{\hbf}{\pbf} \Big)^{\frac{1}{2}} \llbracket (\D^2 \un) \tbf \rrbracket 	\Big\Vert^2_{0,\F}, \\
& \eta_{\E,5}^2 := \frac{1}{2} \sum_{\F \in \EE}  	\alphabold_{\F} \Vert 	\pbf^{\frac{1}{2}} \taubold^{\frac{1}{2}} \llbracket \nabla \un \rrbracket \Vert^2_{0,\F},\quad 
&&	\eta_{\E,6}^2 := \frac{1}{2} \sum_{\F \in \EE} 	\alphabold_{\F} \Vert \sigmabold^{\frac{1}{2}} \llbracket \un \rrbracket \Vert^2_{0,\F},
\end{align*}
with $\alphabold_{\F} = 2$ for $\F \in \EnB$ and $\alphabold_{\F} = 1$ otherwise.
\end{definition}
The notation for the tangential Hessian $(\D^2 \un)\tbf$ and tangential gradient~$(\nabla \un)\cdot \tbf$ must be defined separately in 2D and 3D.
In 2D, the unit tangential vector on a given face uniquely (up to its sign) satisfies $\tbf \cdot \nbf = 0$.
Hence, for~$\vbf \in \mathbb{R}^2$ and $M \in \mathbb{R}^{2 \times 2}$, the terms $\vbf \cdot \tbf$ and $M \tbf$ have their usual linear algebraic meaning.
In 3D, where faces are spanned by two tangential vectors, we commit an abuse of notation and define the action of the tangent on $\vbf \in \mathbb{R}^3$ and $M \in \mathbb{R}^{3 \times 3}$ as 
	\begin{equation}\label{eq:tangentaction}
		\vbf \cdot \tbf = \vbf \times \nbf,		\quad\quad M \tbf  = [M_1^{\top} \times \nbf, M_2^{\top} \times \nbf, M_3^{\top} \times \nbf]^{\top},
	\end{equation}
where $M_i$ denotes row $i$ of $M$.

To avoid requiring $\mathcal{C}^1$-conforming piecewise polynomial spaces, the analysis revolves around a variant of the elliptic reconstruction operator~\cite{Makridakis:2003ws} and a Helmholtz decomposition.

	We define the elliptic reconstruction $\uc \in \V$ of the dG solution $\un \in \Vn$ to satisfy
\begin{equation} \label{elliptic-reconstruction}
\B(\uc, v) = \Bn (\un, v) \quad \forall v \in \V.
\end{equation}
The elliptic reconstruction is well defined for any $\un \in  \Vn$ due to the coercivity of~$\B(\cdot,\cdot)$. It equivalently satisfies
\begin{equation} \label{elliptic-reconstruction-bis}
	\int_\Omega \Dn^2 (\uc -\un) : \D^2 v = \int_\Omega \Lcal( \un ): \Dn^2 v \quad \forall v \in \V.
\end{equation}
We shall combine this with the following Helmholtz decomposition, which is shown in~\cite[Lemma~1]{BMRMorleyAPos} in 2D and~\cite[Lemma~5.2]{HuShi-MorleyApos} in 3D.
\begin{lem}[Helmholtz decomposition] \label{lemma:Helmholtz-dec}
Let $\Omega \subset \mathbb R^d$ and $\ssigmabold\in L^2(\Omega, \mathbb R^{d\times d})$.
There exist~$\xi \in H^2_0(\Omega)$, $\rho \in L^2_0(\Omega)$ or~$ (\rho_a, \rho_b, \rho_c)^{\top} \in L^2(\Omega, \mathbb R^{3})$, and $\Psibold_2 \in [H^1(\Omega)]^2$ or $\Psibold_3 \in H^1(\Omega, \mathbb R^{3\times 3})$, such that
\[
\ssigmabold = \D^2 \xi + \rhobold_d + \curlbf \,\Psibold_d, \quad\text{ where }\quad	\rhobold_2 = 
\begin{bmatrix}
0 & -\rho\\
\rho & 0 
\end{bmatrix},	\,\,
\rhobold_3 =
\begin{bmatrix}
0 & \rho_c & -\rho_b\\
-\rho_c & 0 & \rho_a \\
\rho_b & -\rho_a & 0 
\end{bmatrix}.
\]
Moreover, there exists a positive constant $c_\Omega$, depending only on $\Omega$, such that
\begin{equation} \label{bound:Helmholtz}
\Vert \D^2 \xi \Vert _{0,\Omega} + \Vert \rhobold_d \Vert_{0,\Omega} + \Vert \Psibold_d \Vert_{1,\Omega} \le c_{\Omega} \Vert \ssigmabold \Vert_{0,\Omega}.
\end{equation}
\end{lem}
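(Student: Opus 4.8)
The plan is to obtain the decomposition in two stages: first remove the symmetric Hessian part by solving a biharmonic problem, and then decompose the remaining matrix field, which turns out to be divergence-of-divergence free.

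\textbf{Step 1: the Hessian component.}
Given $\ssigmabold \in L^2(\Omega,\mathbb R^{d\times d})$, let $\xi \in H^2_0(\Omega)$ be the unique solution of $\B(\xi,v) = \int_\Omega \ssigmabold : \D^2 v$ for all $v \in H^2_0(\Omega)$; existence and uniqueness follow from the coercivity of $\B(\cdot,\cdot)$ on $H^2_0(\Omega)$ (see~\cite[Section~5.9]{BrennerScott}), the same property already used for the elliptic reconstruction~\eqref{elliptic-reconstruction}. Testing with $v=\xi$ and applying the Cauchy--Schwarz inequality yields $\Vert \D^2 \xi\Vert_{0,\Omega}\le \Vert\ssigmabold\Vert_{0,\Omega}$. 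Set $\mathbf T := \ssigmabold - \D^2\xi$. Then $\Vert\mathbf T\Vert_{0,\Omega}\le 2\Vert\ssigmabold\Vert_{0,\Omega}$ and, by construction, $\int_\Omega \mathbf T:\D^2 v = 0$ for every $v \in H^2_0(\Omega)$, i.e. $\div\div\mathbf T = 0$ in the sense of distributions.

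\textbf{Step 2: decomposition of the div-div-free remainder.}
It remains to write an arbitrary $\mathbf T \in L^2(\Omega,\mathbb R^{d\times d})$ with $\div\div\mathbf T = 0$ as $\rhobold_d + \curlbf\Psibold_d$, with $\Vert\rhobold_d\Vert_{0,\Omega} + \Vert\Psibold_d\Vert_{1,\Omega}\le c_\Omega\Vert\mathbf T\Vert_{0,\Omega}$. In 2D, the identity $\div(\div\mathbf T)=0$ says that $\div\mathbf T \in H^{-1}(\Omega)^2$ is divergence free; by the de Rham / stream-function theorem on bounded Lipschitz domains (with the usual finite-dimensional adjustment when $\Omega$ is multiply connected) there is $\rho\in L^2_0(\Omega)$ with $\curl\rho = \div\mathbf T$ and $\Vert\rho\Vert_{0,\Omega}\le c_\Omega\Vert\mathbf T\Vert_{0,\Omega}$. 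Since $\curl\rho=(-\partial_y\rho,\partial_x\rho)^\top=(\div(0,-\rho),\div(\rho,0))^\top$, subtracting from $\mathbf T$ the skew matrix $\rhobold_2$ of the statement makes each of its two rows divergence free in $L^2$; each such row is therefore $\curl\psi_i$ for some $\psi_i\in H^1(\Omega)$, normalised by $\int_\Omega\psi_i=0$, with $\Vert\psi_i\Vert_{1,\Omega}\le c_\Omega(\Vert\mathbf T\Vert_{0,\Omega}+\Vert\rho\Vert_{0,\Omega})$ after using the Poincar\'e inequality. This gives $\mathbf T = \rhobold_2 + \curlbf\Psibold_2$ with $\Psibold_2 = (\psi_1,\psi_2)^\top$. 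The 3D case is identical with the vector curl in place of $\curl$: $\div\mathbf T$ is divergence free, hence $\div\mathbf T = \curl\mathbf h$ for some $\mathbf h=(\rho_a,\rho_b,\rho_c)^\top\in L^2(\Omega)^3$, and since $(\curl\mathbf h)_1 = \div(0,\rho_c,-\rho_b)$ and cyclically, subtracting the skew matrix $\rhobold_3$ makes each row of $\mathbf T$ divergence free, so it is a row-wise $\curl$ of some $\Psibold_3\in H^1(\Omega,\mathbb R^{3\times3})$. Collecting the bounds from Steps~1 and~2 proves~\eqref{bound:Helmholtz}.

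\textbf{Main obstacle.}
The technical core is the collection of potential-existence statements used in Step~2 on a bounded Lipschitz (here polygonal or polyhedral) domain: that a divergence-free field in $H^{-1}(\Omega)^d$ admits an $L^2$ scalar (in 2D) or vector (in 3D) potential, and that a divergence-free $L^2$ field admits an $H^1$ potential, each with a stability constant depending only on $\Omega$. On simply connected domains these are the Poincar\'e/de Rham lemmas together with Bogovskii- or Ne\v cas-type a priori estimates; on multiply connected polygons and polyhedra one adds the finite-dimensional space of harmonic fields associated with the topology, which costs nothing in the estimate because no boundary condition is imposed on $\Psibold_d$. The one point requiring genuine care is the bookkeeping of Sobolev regularity across the successive applications of these potential operators, so that one indeed lands in $L^2_0(\Omega)$ for $\rho$ and in $H^1(\Omega)$ for $\Psibold_d$, rather than one order lower; the complete details are in~\cite{BMRMorleyAPos} for 2D and~\cite{HuShi-MorleyApos} for 3D.
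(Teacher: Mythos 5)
The paper does not actually prove this lemma: it simply quotes it, attributing the 2D statement to Lemma~1 of the Morley a posteriori paper and the 3D statement to Lemma~5.2 of Hu--Shi. Your proposal supplies a constructive proof, and its two-stage structure (project onto Hessians via the biharmonic problem, then split the resulting $\operatorname{div}\operatorname{div}$-free remainder into a skew part $\rhobold_d$ plus a row-wise curl) is, as far as I can tell, exactly the construction carried out in those references. Your algebra is right: the row-wise divergence of $\rhobold_2$ is $\curl\rho$ and that of $\rhobold_3$ is $\curl(\rho_a,\rho_b,\rho_c)^\top$, so subtracting $\rhobold_d$ kills the row-wise divergence of $\mathbf T$, and a row-wise divergence-free matrix field is precisely one in the range of the paper's row-wise $\curlbf$; the bound~\eqref{bound:Helmholtz} then falls out of Lax--Milgram, the Ne\v{c}as/de Rham estimate for $\rho$, and Poincar\'e for the potentials.

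Two points are weaker than you present them. First, the dismissal of multiple connectivity as ``costing nothing'' is not self-evident: on a domain with a hole or a handle, a curl-free $H^{-1}$ field (2D) or a divergence-free $L^2$ field (3D) need not have a single-valued potential at all, and the obstruction is a finite-dimensional space of harmonic forms that would have to appear explicitly in the decomposition, not be absorbed into $\Psibold_d$. The cited sources in effect assume $\Omega$ is topologically trivial, and so should you. Second, the $H^1(\Omega)$ bound on the 3D vector potential $\Psibold_3$ without imposing a boundary condition is true on bounded Lipschitz simply-connected domains (via divergence-free extension to a ball and restriction), but it is not the naive $H(\curl)\cap H(\div)\hookrightarrow H^1$ embedding, which fails on nonconvex polyhedra; this is precisely the regularity bookkeeping you correctly identify as the crux, and it should be cited rather than asserted. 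With those two caveats filled in, the argument is the right one.
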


Recall the following identity from \cite{abcd}:
\begin{equation} \label{magic:dG-formula}
		\sum_{\E \in \taun} \int_{\partial \E} \vbf\cdot \nbf \, \sigmabold = \int_{\En} \{ \vbf \cdot \nbf \} \llbracket \sigmabold \rrbracket + \int_{\EnI} \llbracket \vbf \cdot \nbf \rrbracket \{\sigmabold\} \quad
		\forall \vbf,\, \sigmabold \in [\Vn + V]^d.
\end{equation}	
	
	\subsection{Upper bound} \label{subsection:reliability}
Here, we show that the estimator of Definition~\ref{def:estimator} forms an upper bound of the error, using the elliptic reconstruction~\eqref{elliptic-reconstruction} and the Helmholtz decomposition of Lemma~\ref{lemma:Helmholtz-dec}.
For technical simplicity, we suppose that $f \in \Vn$.
More general cases may be treated by proceeding as in~\cite{cohen2012convergence,KV}, resulting in an additional data approximation term, which may dominate the estimator.
	
\begin{thm}[A posteriori error estimate for the $\h\p$-version dG scheme] \label{theorem:reliability}
Let $u \in H^2_0(\Omega)$ and $\un \in \Vn$ solve the biharmonic problem~\eqref{biharmonic-problem-weak} and the dG scheme~\eqref{IPdGFEM}, respectively,
and let $\eta$ be the error estimator of Definition~\ref{def:estimator}. 
Then, there exists a constant $c>0$, independent of~$\h$ and~$\p$, such that
\[
			\Vert u - \un \Vert^2_{dG} \le c \eta^2.
\]
\end{thm}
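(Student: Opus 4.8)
The plan is to split the error $u - \un$ into a conforming part, measured by the distance from $\un$ to its elliptic reconstruction $\uc \in \V$, and a nonconforming part, captured entirely by the jump terms in the dG norm. Concretely, I would write $\Vert u - \un \Vert_{dG} \le \Vert u - \uc \Vert_{dG} + \Vert \uc - \un \Vert_{dG}$ and treat the two pieces separately. For the nonconforming piece, note that $\uc \in H^2_0(\Omega)$ has vanishing jumps, so $\Vert \uc - \un \Vert_{dG}^2 = \Vert \Dn^2(\uc - \un)\Vert_{0,\Omega}^2 + \sum_{\F}\Vert \taubold^{1/2}\llbracket \nabla \un \rrbracket\Vert_{0,\F}^2 + \sum_{\F}\Vert \sigmabold^{1/2}\llbracket \un \rrbracket \Vert_{0,\F}^2$; the last two sums are already controlled by $\eta_{\E,5}^2$ and $\eta_{\E,6}^2$ after accounting for the extra factor $\pbf$ and the $\alphabold_\F$ weighting, and the first term is bounded via~\eqref{elliptic-reconstruction-bis} by testing with $v = \uc - \un$ and invoking the lifting stability bound of Lemma~\ref{lemma:lifting-stab}, again reducing to the jump terms. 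So the nonconforming part costs only $\eta_{\E,5}^2 + \eta_{\E,6}^2$ plus harmless multiples.

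**The conforming part.**
The main work is bounding $\Vert u - \uc \Vert_{dG} = \Vert \D^2(u - \uc)\Vert_{0,\Omega}$, since both $u, \uc \in \V$. Here I would apply the Helmholtz decomposition of Lemma~\ref{lemma:Helmholtz-dec} to the matrix field $\ssigmabold := \D^2(u - \uc)$, writing $\ssigmabold = \D^2\xi + \rhobold_d + \curlbf\,\Psibold_d$ with the stability bound~\eqref{bound:Helmholtz}. Then $\Vert \ssigmabold \Vert_{0,\Omega}^2 = \int_\Omega \ssigmabold : \D^2\xi + \int_\Omega \ssigmabold : \rhobold_d + \int_\Omega \ssigmabold : \curlbf\,\Psibold_d$. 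The middle term vanishes because $\D^2(u-\uc)$ is symmetric while $\rhobold_d$ is skew-symmetric. The $\D^2\xi$ term is the "elliptic residual" contribution: using $\B(u,\xi) = (f,\xi)$ and the defining relation~\eqref{elliptic-reconstruction}, $\int_\Omega \D^2(u-\uc):\D^2\xi = (f,\xi) - \Bn(\un,\xi)$, which after elementwise integration by parts (twice) produces the bulk residual $f - \Delta^2\un$, the normal-flux jump $\llbracket \nbf\cdot\nabla\Delta\un\rrbracket$, and the normal-Hessian jump $\llbracket (\D^2\un)\nbf\rrbracket$, tested against $\xi$ and $\nabla\xi$; here one subtracts the quasi-interpolant $\I\xi$ from Proposition~\ref{proposition:BabuSuri-KarkulikMelenk} (using that $\xi\in H^2_0$ so $\I\xi$ has no jumps and the discrete-residual orthogonality can be exploited since $f\in\Vn$), applies the trace inequality~\eqref{standard:trace} together with~\eqref{BabuSuri-KarkulikMelenk-prop} to get the $(\hbf/\pbf)^2$, $(\hbf/\pbf)^{3/2}$, $(\hbf/\pbf)^{1/2}$ weights matching $\eta_{\E,1},\eta_{\E,2},\eta_{\E,3}$, and controls $\Vert \D^2\xi\Vert_{0,\Omega}$ by $\Vert\ssigmabold\Vert_{0,\Omega}$. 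The $\curlbf\,\Psibold_d$ term is the "nonconforming residual": integrating by parts moves $\curlbf$ onto $\D^2(u-\uc)$; since $\curlbf\,\D^2 u = 0$ weakly (as $u\in H^2$) this collapses to boundary contributions involving the tangential jumps $\llbracket(\D^2\un)\tbf\rrbracket$ and $\llbracket\nabla\un\rrbracket$ of the broken Hessian of $\uc - \un$, giving the $\eta_{\E,4},\eta_{\E,5}$ terms after using the approximation property of a scalar/vector Clément-type interpolant of $\Psibold_d\in H^1$ and the $H^1$ stability~\eqref{bound:Helmholtz}; one must also convert $\uc - \un$ back to $\un$ jumps, which is consistent since $\uc$ is conforming.

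**Assembling and the main obstacle.**
Collecting everything, $\Vert\ssigmabold\Vert_{0,\Omega} \lesssim \eta$, hence $\Vert u - \uc\Vert_{dG}\lesssim\eta$, and combined with the nonconforming estimate we obtain $\Vert u - \un\Vert_{dG}\le c\,\eta$. The step I expect to be most delicate is the bookkeeping in the $\curlbf\,\Psibold$ term: one has to carefully push the matrix $\curlbf$ through the element-by-element integration by parts, apply the "magic dG identity"~\eqref{magic:dG-formula} to reassemble the sum of boundary integrals into face averages and jumps, correctly identify which jumps survive (tangential Hessian and tangential gradient, because of the Helmholtz structure and because $u$ itself contributes nothing), and keep track of the $\hp$-weights when replacing $\Psibold_d$ by its interpolant — the trace and interpolation estimates must be combined so that the resulting powers of $\hbf/\pbf$ exactly match $\eta_{\E,4}^2$ and $\eta_{\E,5}^2$ with the correct $\alphabold_\F$ weights, including the boundary faces where $\xi$ and $\nabla\xi$ vanish but the broken-Hessian jumps of $\un$ against $\partial\Omega$ do not. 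The analogous 2D-versus-3D distinction in the definition of the tangential action~\eqref{eq:tangentaction} also has to be threaded through this computation consistently.
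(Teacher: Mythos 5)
You have the right ingredients --- elliptic reconstruction, quasi-interpolation, the trace inequality, and the Helmholtz decomposition --- but you have applied the Helmholtz decomposition to the wrong piece of the error, and the alternative you propose for the nonconforming piece does not work.

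\textbf{The nonconforming part.} You propose to bound $\Vert \Dn^2(\uc - \un)\Vert_{0,\Omega}$ by ``testing~\eqref{elliptic-reconstruction-bis} with $v = \uc - \un$.'' But~\eqref{elliptic-reconstruction-bis} holds only for $v \in \V = H^2_0(\Omega)$, and $\uc - \un \notin \V$ because $\un$ is a discontinuous piecewise polynomial while $\uc$ is conforming. So this test function is not admissible, and this is precisely the obstruction that forces the use of the Helmholtz decomposition. The paper applies Lemma~\ref{lemma:Helmholtz-dec} to $\ssigmabold = \Dn^2 \nonconferr$ --- a matrix field that is \emph{not} the Hessian of any $H^2$ function --- producing a nontrivial $\D^2 \xi$ component (handled via~\eqref{elliptic-reconstruction-bis} since $\xi \in H^2_0(\Omega)$ is admissible) and a nontrivial $\curlbf\,\Psibold$ component (handled by quasi-interpolation and the identity~\eqref{magic:dG-formula}), yielding the $\eta_{\E,4}$, $\eta_{\E,5}$, $\eta_{\E,6}$ contributions in~\eqref{non-conforming:error:bound}.

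\textbf{The conforming part.} You propose to apply the Helmholtz decomposition to $\ssigmabold := \D^2(u - \uc)$. Since $u - \uc \in H^2_0(\Omega)$, this is already the Hessian of an $H^2_0$ function, so the decomposition is trivial ($\xi = u - \uc$, $\Psibold = 0$, $\rho = 0$) and buys nothing. In particular, your claim that the $\curlbf\,\Psibold$ component produces the $\eta_{\E,4}$ term fails, because there is no such component: $\D^2(u-\uc)$ has no face jumps, as both $u$ and $\uc$ are globally $H^2$. Your subsequent argument conflates $u-\uc$ with $\uc-\un$ when extracting $\llbracket (\D^2 \un) \tbf\rrbracket$. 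The correct treatment of the conforming error, as in the paper, is a direct residual argument: use Galerkin orthogonality to insert $\I\conferr$, integrate by parts, and estimate the resulting terms $T_1,\ldots,T_5$ with~\eqref{BabuSuri-KarkulikMelenk-prop} and~\eqref{standard:trace}, giving the bound~\eqref{bound:c-term} involving $\eta_{\E,1}$, $\eta_{\E,2}$, $\eta_{\E,3}$, $\eta_{\E,5}$, $\eta_{\E,6}$ --- no Helmholtz decomposition is needed here. So the gap is real: without the Helmholtz decomposition of $\Dn^2 \nonconferr$, you cannot obtain the $\eta_{\E,4}$ term nor control $\Vert \Dn^2 \nonconferr \Vert_{0,\Omega}$.
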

\begin{proof}
We use the elliptic reconstruction~$\uc$ in~\eqref{elliptic-reconstruction} to split the error into a \emph{conforming error} $\conferr := u - \uc$ and a \emph{nonconforming error} $\nonconferr := \uc - \un$, which we estimate separately.

\paragraph*{\textbf{Estimate of conforming error~$\conferr$.}}
Recalling \eqref{elliptic-reconstruction}, the elliptic reconstruction $\uc$ satisfies
\[
\Vert \D^2 (u-\uc) \Vert^2_{0,\Omega} 	= \B(u-\uc, u-\uc) = (\f,u-\uc)_{0,\Omega} - \Bn (\un, u-\uc),
\]
and the dG scheme~\eqref{IPdGFEM} implies that
\[
\Vert \D^2 \conferr \Vert^2_{0,\Omega}	=	(\f, \conferr - \vn)_{0,\Omega} - \Bn (\un, \conferr - \vn)  \quad \forall \vn \in \Vn.
\]
Choosing $\vn = \I \conferr$ and denoting $\etan := \ec - \I \conferr$, where $\I \conferr \in \Vn \cap \mathcal{C}^0(\Omega)$ is the quasi-interpolant from Corollary~\ref{proposition:BabuSuri-KarkulikMelenk} with~$s=2$, integrating by parts twice,
applying the definition~\eqref{lifting-operator} of the lifting operator $\Lcal$,	and using the continuity of $\etan$ in the relation~\eqref{magic:dG-formula} provide the error relation
\[
\begin{split}
				&\Vert \D^2 \conferr \Vert^2_{0,\Omega}	
				=  
				\int_\Omega (\f - \Deltan^2 \un) \etan - \int_{\EnI} \llbracket (\D^2 \un) \nbf \rrbracket \cdot \{\nabla \etan \} + \int_{\EnI} \llbracket \nbf \cdot \nabla \Delta \un \rrbracket  \etan \\
				& \quad \quad \quad \quad \quad \quad- \int_\Omega \Lcal (\un) : \Dn^2 \etan - \int_{\En} \taubold \llbracket \nabla \un \rrbracket \cdot \llbracket \nabla \etan \rrbracket=: \sum_{j=1}^5 T_j.
			\end{split}		
\]
We proceed with the estimate by treating each term~$T_j$ separately.

Estimate~\eqref{BabuSuri-KarkulikMelenk-prop} provides
\[
T_1 \leq 	\Big\Vert \Big(\frac{\hbf}{\pbf}\Big)^2 (f - \Deltan^2 \un) \Big\Vert_{0,\Omega}	\Big\Vert \Big(\frac{\hbf}{\pbf}\Big)^{-2} \etan \Big\Vert_{0,\Omega} \leq c \Big\Vert \Big(\frac{\hbf}{\pbf}\Big)^2 (f - \Deltan^2 \un) \Big\Vert_{0,\Omega} 
			\Vert \Dn^2 \conferr \Vert_{0,\Omega},
\]
and trace inequality~\eqref{standard:trace} and estimate~\eqref{BabuSuri-KarkulikMelenk-prop} give
\[
T_2 \leq c   	\Big\Vert   \Big( \frac{\hbf}{\pbf}\Big)^{\frac{1}{2}} \llbracket (\D^2 \un) \nbf \rrbracket \Big\Vert_{0,\EnI} \Vert  \D^2 \conferr  \Vert_{0,\Omega},	\quad\quad T_3	
\leq c 	\Big\Vert \Big( \frac{\hbf}{\pbf}\Big)^{\frac{3}{2}} \llbracket \nbf \cdot \nabla \Delta \un \rrbracket \Big\Vert_{0,\EnI} \Vert  \D^2 \conferr  \Vert_{0,\Omega}.
\]
Similarly, recalling Lemma~\ref{lemma:lifting-stab}, trace inequality~\eqref{standard:trace}, and estimate~\eqref{BabuSuri-KarkulikMelenk-prop}, we find that
\[
T_4 \leq c   (\Vert \sigmabold^{\frac{1}{2}} \llbracket \un \rrbracket \Vert^2_{0,\En} 	+ \Vert \taubold^{\frac{1}{2}} \llbracket \nabla \un \rrbracket \Vert^2_{0,\En})  \Vert  \D^2 \conferr \Vert_{0,\Omega}, \quad\quad
T_5	 			\leq 	c	\big\Vert \pbf^{\frac{1}{2}}  \taubold^{\frac{1}{2}} \llbracket \nabla \un \rrbracket \big\Vert_{0,\En} \Vert \D^2 \conferr \Vert_{0,\Omega}.\\
\]
Collecting the estimates for the individual terms $T_j$, we deduce that
\begin{equation} \label{bound:c-term}
\begin{split}
\Vert \D^2 \conferr \Vert_{0,\Omega} 	
& \le  c \Big(  \,\Big \Vert \Big(\frac{\hbf}{\pbf}\Big)^2 (\f - \Deltan^2 \un)  \Big\Vert_{0,\Omega} + \Big\Vert   \Big( \frac{\hbf}{\pbf}\Big)^{\frac{1}{2}} \llbracket (\D^2 \un) \nbf \rrbracket \Big\Vert_{0,\EnI} 	\\
& \qquad \quad + \Big\Vert \Big( \frac{\hbf}{\pbf}\Big)^{\frac{3}{2}} \llbracket \nbf \cdot \nabla \Delta \un \rrbracket \Big\Vert_{0,\EnI} + \big\Vert \pbf^{\frac{1}{2}}  \taubold^{\frac{1}{2}} \llbracket \nabla \un \rrbracket \big\Vert_{0,\En} 
+ \Vert \sigmabold^{\frac{1}{2}} \llbracket \un \rrbracket \Vert_{0,\En} \Big).
\end{split}
\end{equation}

\paragraph*{\textbf{Estimate of nonconforming error $\nonconferr$.}}
Applying the Helmholtz decomposition of Lemma~\ref{lemma:Helmholtz-dec} to $\ssigmabold = \Dn^2 \nonconferr$, the skew symmetry of term~$\rhobold$ implies that
\begin{equation} \label{split:AB}
				\Vert \Dn^2 \nonconferr \Vert^2_{0,\Omega} 	
				= \int_{\Omega} \Dn^2 \nonconferr : \D^2 \xi  + \int_{\Omega} \Dn^2 \nonconferr : \curlbf \, \Psibold.
\end{equation}
To estimate the first term of~\eqref{split:AB}, we use the smoothness of $\xi \in H^2_0(\Omega)$ with the property~\eqref{elliptic-reconstruction-bis},
Lemma~\ref{lemma:lifting-stab}, and the stability of the Helmholtz decomposition~\eqref{bound:Helmholtz} to find that
\begin{align} \label{bound:AII}
	\notag
	\int_{\Omega} \Dn^2 (\uc - \un) : \D^2 \xi	& \le \Vert \Lcal(\un) \Vert_{0,\Omega} \Vert \D^2 \xi \Vert_{0,\Omega} 
	\le (c_s)^{\frac12}(\Vert \sigmabold^{\frac{1}{2}} \llbracket \un \rrbracket \Vert^2_{0,\En} + \Vert \taubold^{\frac{1}{2}} \llbracket \nabla \un \rrbracket \Vert^2_{0,\En})^{\frac{1}{2}}   \Vert \D^2 \xi \Vert_{0,\Omega}
	\\& 
	\le c (\Vert \sigmabold^{\frac{1}{2}} \llbracket \un \rrbracket \Vert^2_{0,\En} + \Vert \taubold^{\frac{1}{2}} \llbracket \nabla \un \rrbracket \Vert^2_{0,\En})^{\frac{1}{2}} \Vert \Dn^2 (\uc - \un) \Vert_{0,\Omega}.
\end{align}
As for the second term of~\eqref{split:AB}, we insert the vector-valued version $\Ibf$ of the quasi-interpolant introduced in Corollary~\ref{proposition:BabuSuri-KarkulikMelenk} with $s=1$,
recall that $\Dn^2 = \nabla \nabla^{\top}$, integrate by parts twice and use properties of elementary differential operators to obtain
\begin{align}
\int_{\Omega} \Dn^2 \nonconferr : \curlbf  \, \Psibold 	&= \sum_{\E \in \taun} \Big( \int_{\partial \E} \big( (\Dn^2 \nonconferr) \tbf \big) : (\Psibold - \Ibf \Psibold) 	+ \int_{\partial \E} \nabla\nonconferr \cdot ((\curlbf\, \Ibf \Psibold) \nbf)    \Big), \label{bound:C-D}
\end{align}
observing our notational convention~\eqref{eq:tangentaction} for the tangential component  $(\Dn^2 \nonconferr) \tbf$ of the Hessian.

To estimate the first term of~\eqref{bound:C-D}, the relation~\eqref{magic:dG-formula}, the continuity of $\Psibold - \Ibf \Psibold$, and the fact that $\uc \in H^2_0(\Omega)$ give
\begin{align*}
			\sum_{\E \in \taun} 
				\int_{\partial \E} \big( (\Dn^2 \nonconferr) \tbf \big) : (\Psibold - \Ibf \Psibold) 
			&= 
			\int_{\En} \llbracket (\D^2 \nonconferr) \tbf \rrbracket : \{ \Psibold - \Ibf \Psibold  \}  
			= 
			- \int_{\En} \llbracket (\D^2 \un) \tbf \rrbracket : ( \Psibold - \Ibf \Psibold  ) 
			\\& 
			\leq 
			\Big \Vert  \Big( \frac{\hbf}{\pbf} \Big)^{\frac{1}{2}} \llbracket (\D^2 \un)\tbf \rrbracket \Big \Vert_{0,\En} 
			\Big \Vert  \Big( \frac{\pbf}{\hbf} \Big)^{\frac{1}{2}} (\Psibold - \Ibf \Psibold)      
			\Big \Vert_{0,\En}.
\end{align*}
Applying~\eqref{BabuSuri-KarkulikMelenk-prop}, and using~\eqref{inverse:H1L2} and the stability of the Helmholtz decomposition~\eqref{bound:Helmholtz} then imply
\begin{equation} \label{bound:CII}
			\begin{split}
				\widetilde c 
				\,
				\Big \Vert  \Big( \frac{\hbf}{\pbf} \Big)^{\frac{1}{2}} \llbracket (\D^2 \un) \tbf \rrbracket \Big \Vert_{0,\En} 
				\Vert \Psibold \Vert _{1,\Omega}
				\leq 
				c 
				\,
				\Big \Vert  \Big( \frac{\hbf}{\pbf} \Big)^{\frac{1}{2}} \llbracket (\D^2 \un) \tbf \rrbracket \Big \Vert_{0,\En}
				\Vert \Dn^2 \nonconferr \Vert_{0,\Omega}.
			\end{split}
\end{equation}
To estimate the second term of~\eqref{bound:C-D}, we note that $\llbracket (\curlbf \, \Ibf \Psibold) \nbf \rrbracket _{\EnI} = 0$ since each entry of $\Psibold$ is in $H^1(\Omega)$.
Consequently, each entry of $\Ibf \Psibold $ is also in $H^1(\Omega)$, implying $\curlbf \, \Ibf \Psibold \in H(\curl,\Omega)$;
see also~\cite{DariDuranPadraVampa-aposNC} and~\cite[proof of Lemma $3.3$]{CarstensenBartelsJansche-aposNC}.
		Combining this with~\eqref{magic:dG-formula} and $\uc \in H^2_0(\Omega)$, we find
\[
\sum_{\E \in \taun}  \int_{\partial \E} \! \nabla\nonconferr \cdot ((\curlbf\, \Ibf \Psibold) \nbf)      
 = \!-\! \int_{\En} \! \llbracket \nabla \un \rrbracket \cdot ( (\curlbf\, \Ibf \Psibold) \nbf ) \leq \Vert \taubold^{\frac{1}{2}} \llbracket \nabla \un \rrbracket \Vert_{0,\En}	\Vert \taubold^{-\frac{1}{2}} (\curlbf\, \Ibf \Psibold) \nbf \Vert_{0,\En}.
\]
Trace inverse estimate~\eqref{inverse:L2trace}, definition~\eqref{dG:stab-parameters} of~$\taubold$, the stability of~$\Ibf$, which follows from~\eqref{BabuSuri-KarkulikMelenk-prop}, and the Helmholtz decomposition further yield
\[
\Vert \taubold^{-\frac{1}{2}} (\curlbf\, \Ibf \Psibold) \nbf \Vert_{0,\En}	 \le c_1 \Vert \curlbf\, \Ibf \Psibold \Vert_{0,\Omega} \le c_2 \Vert \Ibf \Psibold \Vert_{1,\Omega}\le c_3 \Vert \Psibold \Vert_{1,\Omega} \le c_4 \Vert \Dn^2 \nonconferr \Vert_{0,\Omega}.
\]
Combined with estimates~\eqref{split:AB}, \eqref{bound:AII}, \eqref{bound:C-D}, and \eqref{bound:CII}, this provides
\begin{equation}\label{non-conforming:error:bound}
\Vert \Dn^2\nonconferr \Vert_{0,\Omega}\le c\Big(\Vert \sigmabold^{\frac{1}{2}} \llbracket \un \rrbracket \Vert_{0,\En}+\Vert \taubold^{\frac{1}{2}} \llbracket \nabla \un \rrbracket \Vert_{0,\En} + \Big\Vert  \Big( \frac{\hbf}{\pbf} \Big)^{\frac{1}{2}} \llbracket (\D^2 \un) \tbf \rrbracket \Big\Vert_{0,\En}
\Big).
\end{equation}
The result follows by combining~\eqref{non-conforming:error:bound} with~\eqref{bound:c-term}, and recalling definition~\eqref{dG:norm}.
\end{proof}

\begin{remark}[A comment on nonconforming error~$\nonconferr$]
Term~$\eta_{\E,5}$ in Definition~\ref{def:estimator} contains an additional factor $\pbf^{1/2}$, appearing through the estimate of $T_5$ above.
This suboptimal factor is present because the quasi-interpolation operator we use is only globally $\mathcal C^0$: while the jump terms involving $\un$ vanish, those involving $\nabla \un$ do not.
However, since the dG space does not generally contain a $\mathcal C^1$-conforming subspace with optimal approximation properties, it is not possible to improve this by constructing a $\mathcal C^1$-conforming quasi-interpolation operator.
\end{remark}

\begin{remark}[A comment on conforming error~$\conferr$]\label{averaging operator}
It is possible to derive an error estimate by splitting the error using an averaging operator to $\mathcal C^1$-conforming macro element or virtual element spaces, as in \cite{GeorgoulisHoustonIPdGhp,BrennerSung2005,brenner2019virtual}.
However, such an estimate for the nonconforming part of the estimator requires using $L_\infty$ to $L_2$ norm polynomial inverse inequalities several times.
This produces an error estimate, which is suboptimal in terms of the polynomial degree by~$\p^d$, rather than $\p$-optimal estimate~\eqref{non-conforming:error:bound} derived here.
\end{remark}

\begin{remark}[Hanging nodes] \label{handing-nodes}
Theorem~\ref{theorem:reliability} applies to meshes without hanging nodes. Hanging nodes in simplicial meshes may be removed using the well known red-green refinement strategy, although they cannot be removed in tensor product meshes without refining to the boundary.

The estimate for nonconforming error~$\nonconferr$ remains valid in presence of hanging nodes. This may be shown by arguing as in~\cite{submitted:Peter}.
The challenge is in constructing a $\mathcal C^0$-conforming quasi-interpolation operator for the estimate of conforming error~$\conferr$. 
For 2D parallelogram meshes with at most one hanging node per face, an explicit analysis of the $\p$-suboptimality may be performed as e.g. in~\cite[Theorem 4.72]{SchwabpandhpFEM} and~\cite[Theorem~3.6]{houston2000stabilized}.
The resulting estimate takes the form
\[
\Vert u - \un \Vert^2_{dG} \le c \sum_{\E \in \taun} \pE^2 \etaE^2.
\]
A similar result with additional suboptimality with respect to~$\p$ may be shown on cubic meshes by using~\cite[Section 6]{Schotzau:2018vg}. 
We further explore the influence of hanging nodes numerically in Section~\ref{section:nr}.
\end{remark}
	
\subsection{Local lower bound} \label{subsection:efficiency}
We show that the error estimator of Definition~\ref{def:estimator} provides a local lower bound on the error of the scheme measured in the dG norm.
The constant in the bound is optimal with respect to the mesh size~$\hbf$, but algebraically suboptimal in terms of the polynomial degree~$\pbf$.
	
\begin{thm}[Local lower bound] \label{theorem:efficiency}
Define the piecewise constant function~$\cetaEtw(\pbf)$ by
\begin{equation} \label{cetaEth}
\cetaEtw(\pbf)_{|\E}  =
\begin{cases}
\pbf_{|\E}^{6} & \text{if } \E \text{ is a triangle/tetrahedron}\\
\pbf_{|\E}^{4} & \text{if } \E \text{ is a parallelogram/parallelepiped}
\end{cases}\quad \quad \forall \E \in \taun,
\end{equation}
and, for $\E \in \taun$, let $\omegaE$ denote the patch of elements sharing a face with $\E$.

Let~$u \in H^2_0(\Omega)$ and $\un \in \Vn$ be the solutions to the biharmonic problem~\eqref{biharmonic-problem-weak} and the dG scheme~\eqref{IPdGFEM}, respectively.
Let~$\etaE$ be the local error estimator from Definition~\ref{def:estimator}.
Then, there exists a constant $c>0$ independent of $\hbf$ and $\pbf$ such that, for each $\E \in \taun$,
\begin{equation} \label{efficiency}
\etaE	 \le c \big( \big\Vert \pbf^{4d+\frac{3}{2}} \cetaEtwsq(\pbf)  \Dn^2(u-\un) \big\Vert_{0,\omegaE} + \big\Vert \pbf^{\frac{3}{2}} \cetaEtw(\pbf) \taubold ^{\frac{1}{2}} \llbracket \nabla \un \rrbracket \big\Vert_{0,\partial \E} 
														+ \big\Vert \sigmabold ^{\frac{1}{2}} \llbracket \un \rrbracket \big\Vert_{0,\partial \E}\big).
\end{equation}
\end{thm}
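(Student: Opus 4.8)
The plan is to bound each of the six estimator contributions $\eta_{\E,j}$, $j=1,\dots,6$, separately by the right-hand side of~\eqref{efficiency}, following the classical bubble-function technique of Verf\"urth but with the $\h\p$-explicit inverse and extension estimates of Section~\ref{section:technical-results}. Throughout, the key observation is that $\Delta^2 u = \f$ and that $u \in H^2_0(\Omega)$, so that jumps of $u$, $\nabla u$, $(\D^2 u)\nbf$, $(\D^2 u)\tbf$, and $\nbf\cdot\nabla\Delta u$ all vanish across internal faces; hence every jump of $\un$ appearing in the estimator may be rewritten as a jump of $\un - u$. The terms $\eta_{\E,5}$ and $\eta_{\E,6}$ are then immediate: $\eta_{\E,6}^2 = \tfrac12\sum_{\F\in\EE}\alphabold_\F\Vert\sigmabold^{1/2}\llbracket\un\rrbracket\Vert_{0,\F}^2 \le c\Vert\sigmabold^{1/2}\llbracket\un\rrbracket\Vert_{0,\partial\E}^2$ directly appears on the right, and similarly $\eta_{\E,5}^2 \le c\Vert\pbf^{1/2}\taubold^{1/2}\llbracket\un\rrbracket\Vert_{0,\partial\E}^2$, which is dominated by the middle term of~\eqref{efficiency} since $\pbf \le \pbf^3\cetaEtw(\pbf)^2$.

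\textbf{The element residual $\eta_{\E,1}$.} Set $\psi_\E := \bE^2 (\f - \Delta^2\un)_{|\E} \in \mathbb{P}_{\pE+4d}(\E)$ (using $\bE$ to the fourth power in total, to retain enough boundary vanishing for the $H^2$-argument). Since $\f - \Delta^2 u = 0$ on $\E$, integration by parts four times gives
\[
\int_\E (\f - \Delta^2\un)\,\psi_\E = \int_\E \D^2(u-\un) : \D^2\psi_\E .
\]
Cauchy--Schwarz, the inverse estimate~\eqref{inverse:bubble} applied twice to pass from $\Vert\bE^2(\f-\Delta^2\un)\Vert_{0,\E}$ back to $\Vert\bE(\f-\Delta^2\un)\Vert_{0,\E}$ and then (bounding below) to the norm of $\f-\Delta^2\un$ weighted appropriately, together with the weighted $H^2$-inverse estimate (the iterated form of Proposition~\ref{proposition:weighted-inverse-H1L2}) to control $\Vert\D^2\psi_\E\Vert_{0,\E}$ by $c(\p^2/\h^2)\Vert\f-\Delta^2\un\Vert_{0,\E}$, yield
\[
\Big\Vert\Big(\tfrac{\hbf}{\pbf}\Big)^2(\f-\Delta^2\un)\Big\Vert_{0,\E} \le c\,\pbf^{4d}\cetaEtw(\pbf)^2\,\Vert\Dn^2(u-\un)\Vert_{0,\E},
\]
after absorbing the $\h,\p$ powers; the accumulated polynomial powers are precisely what forces the $\pbf^{4d}\cetaEtwsq$ prefactor. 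Here one must be slightly careful that $\Delta^2\un$ is a polynomial of degree $\pE-4$ and that $\bE^2$-weighted inverse estimates apply on the full simplex/box.

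\textbf{The face residuals $\eta_{\E,2}$, $\eta_{\E,3}$, $\eta_{\E,4}$.} For a face $\F\in\EE\cap\EnI$, use the extension operator $\Eit$ of Corollary~\ref{corollary:extension-inverse} to lift the polynomial face data (e.g.\ $\llbracket\nbf\cdot\nabla\Delta\un\rrbracket$ for $\eta_{\E,2}$, times the face bubble $\Phi_\F$) into $H^2$ of the two elements meeting at $\F$, extended by zero elsewhere; call this $w_\F$. Then $w_\F\in H^2_0(\omegaE')$ for the two-element patch, $w_\F$ vanishes on the other faces of those elements, and
\[
\int_\F \Phi_\F\,(\llbracket\nbf\cdot\nabla\Delta\un\rrbracket)^2 = \int_\F \llbracket\nbf\cdot\nabla\Delta\un\rrbracket\, w_\F = \int_{\omegaE'}\!\! \D^2(\un - u):\D^2 w_\F + (\text{lower-order element and face terms}),
\]
the element terms being $\eta_{\E,1}$-type residuals and the remaining boundary terms being $\eta_{\E,3}$, $\eta_{\E,4}$-type jumps (which are handled by a standard bootstrapping/absorption argument, ordering the faces so $\eta_{\E,2}$ is treated before $\eta_{\E,3}$ before $\eta_{\E,4}$, or by combining all face estimators into one inequality and absorbing). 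Using~\eqref{eq:extensionoperator} to bound $\Vert w_\F\Vert_{0},\ \vert w_\F\vert_1,\ \Vert\D^2 w_\F\Vert_0$ in terms of $\hE^{1/2}\pE^{-1}\Vert\cdot\Vert_{0,\F}$ with the appropriate extra $\hE,\pE$ powers, together with the bubble inverse estimate~\eqref{inverse:bubble} on $\F$ to pass from the $\Phi_\F$-weighted $L^2(\F)$-norm back to the plain one, gives the claimed bounds; the $\eta_{\E,4}$ tangential term in 3D additionally uses the identity~\eqref{eq:tangentaction} and an integration-by-parts on the face, but the structure is the same. Finally one collects all contributions over the (finitely many, shape-regularity) faces of $\E$ into the patch norm $\Vert\cdot\Vert_{0,\omegaE}$.

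\textbf{Main obstacle.} The delicate point is the double integration by parts in $\eta_{\E,1}$ and the quadruple one in $\eta_{\E,2}$: these demand that the bubble weight vanish to sufficiently high order on $\partial\E$ (which is why $\bE^2$ rather than $\bE$ is used, and why one needs the weighted $H^2$-inverse estimate rather than just the $H^1$ one) and force the repeated use of the $\h\p$-explicit inverse estimates, each costing a power $\p^{d(\beta-\alpha)}$ from Proposition~\ref{proposition:inverse-bubble}; tracking these powers carefully to land exactly on the $\pbf^{4d+3/2}\cetaEtwsq$ prefactor — and ensuring the $\eta_{\E,3},\eta_{\E,4}$ cross-terms generated in the face estimates can be absorbed rather than producing a circular bound — is the bookkeeping heart of the proof. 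I would organise this by first proving a single combined inequality for $\sum_{j=1}^4\eta_{\E,j}^2$ and only then separate out $\eta_{\E,5},\eta_{\E,6}$, to make the absorption step transparent.
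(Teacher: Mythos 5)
Your high-level plan — estimate each $\eta_{\E,j}$ separately via the error equation, element and face bubbles, the extension operator of Corollary~\ref{corollary:extension-inverse}, and the $\h\p$-inverse estimates — is the correct one and matches the paper's structure for $\eta_{\E,1}$, $\eta_{\E,5}$, $\eta_{\E,6}$. However, the face residuals $\eta_{\E,2}$, $\eta_{\E,3}$, $\eta_{\E,4}$ are where the difficulty actually sits, and your sketch misses the devices that make that part work.

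First, the paper does \emph{not} use extension operators for $\eta_{\E,4}$, nor for the whole of $\eta_{\E,3}$. It observes that $(\D^2\un)\tbf$ and $\nbf^\top(\D^2\un)\tbf$ are tangential derivatives of $\nabla\un$ along $\F$, so they are controlled directly by $\llbracket\nabla\un\rrbracket$ via the one-dimensional inverse estimate~\eqref{inverse:H1L2} on the face: $\eta_{\E,4}\le c\,\eta_{\E,5}$ and the tangential part of $\eta_{\E,3}$ likewise. This normal/tangential splitting of~$\llbracket(\D^2\un)\nbf\rrbracket$ is essential — only the \emph{normal-normal} component $\llbracket\nbf^\top(\D^2\un)\nbf\rrbracket$ requires a test-function argument — and your proposal omits it.

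Second, for that normal-normal component and for $\eta_{\E,2}$, the test functions must be globally in $H^2_0(\Omega)$. Lifting ``the face bubble times the data'' into the two elements and ``extending by zero elsewhere,'' as you propose, does not give an $H^2$ function: the extension from Proposition~\ref{proposition:extension-inverse} controls regularity inside a single element and says nothing about the normal derivative matching up across the remaining faces of the two-element patch. The paper resolves this with the kite $\Etilde$ of Section~\ref{subsection:meshes-degrees}, compactly contained in $\E\cup\E^*$, on which the bubble $\bEtilde$ and its powers vanish to the right order on $\partial\Etilde$, so the zero extension is legitimately $H^2_0$. Your proposal never mentions the kite.

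Third, and most critically for $\eta_{\E,3}$: to isolate $\llbracket\nbf^\top(\D^2\un)\nbf\rrbracket$ from the error equation one needs a test function $v$ with $\{v\}|_\F = 0$, $\{\nabla v\}\cdot\tbf|_\F=0$ and $\{\nabla v\}\cdot\nbf|_\F$ proportional to a face bubble times the data. A plain face-bubble lift has $v|_\F\ne 0$, so the $\llbracket\nbf\cdot\nabla\Delta\un\rrbracket\{v\}$ term survives and pollutes the estimate in a genuinely circular way. The paper constructs the $\mathcal C^1$ bubble $b_\F = \bell\,\bEtilde^2$ (an affine function vanishing on $\F$ with unit normal derivative there, times the square of the kite face bubble), which has exactly the required trace behaviour, and sets $v = b_\F r$ with $r$ the extension of the scaled jump. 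This is the key construction your proposal is missing; without it, the ``bootstrapping/absorption'' you gesture at would not resolve the circularity between $\eta_{\E,2}$ and $\eta_{\E,3}$ — the paper instead uses a non-circular cascade $\eta_{\E,4}\lesssim\eta_{\E,5}$, then $\eta_{\E,3}\lesssim\text{error}+\eta_{\E,5}$, then $\eta_{\E,2}\lesssim\text{error}+\eta_{\E,3}$.

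Two minor points: the ``fourth power in total'' remark for $\eta_{\E,1}$ conflates the $\bE^4$ appearing in the quadratic form with the degree of boundary vanishing — $\bE^2$ already vanishes to second order; and your claimed $\eta_{\E,1}$ bound carries an extra $\cetaEtwsq(\pbf)$ that the paper's argument does not produce (the $\cetaEtw$ powers in~\eqref{efficiency} originate in the $\eta_{\E,2}$ and $\eta_{\E,3}$ estimates through Proposition~\ref{proposition:inverse-bubble} with $s=3$ or $s=2$, not in $\eta_{\E,1}$).
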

\begin{proof}
Without loss of generality, we suppose that the element $\E$ has diameter $\hE = 1$. The quasi-uniformity assumption~\eqref{quasi-uniformity:assumption} ensures that the diameter of each face is also approximately $1$.
The proof in the general case was shown in a similar setting in~\cite{Virtanen:Apos-biharmonic}, and follows by a scaling argument.

The terms~$\eta_{\E, 5}$ and~$\eta_{\E, 6}$ form part of the dG norm, up to a scaling by~$\pbf$ and are therefore estimated trivially.
		
\paragraph*{\textbf{The error equation.}}
The biharmonic problem~\eqref{biharmonic-problem-weak} and the dG scheme~\eqref{IPdGFEM} imply that
\[
\Bn(u-\un,v) = (\f,v)_{0,\Omega} - \Bn(\un, v) \quad \quad \forall v \in H^2_0(\Omega),
\]
and integrating by parts twice produces the error equation
\begin{equation} \label{error:equation}
					(\Dn^2(u-\un), \Dn^2 v)_{0,\Omega} 
					 = \!\!\! \sum_{\E \in \taun} \!\! \int_{\E} \! (\f - \Delta^2 \un)v - \!\!\! \sum_{\F \in \En} \!\! \int_{\F } \! ( \llbracket (\D^2 \un)\nbf \rrbracket \cdot \{ \nabla v \} \! - \! \llbracket \nbf \cdot \nabla \Delta \un \rrbracket \{ v \}  ).
\end{equation}
		
\paragraph*{\textbf{Estimate of $\eta_{\E,1}$.}}
		Denote the bubble function on $\E$ by $\bE$. 
		Applying~\eqref{inverse:bubble} produces
		\[
		\Vert \f - \Delta^2 \un \Vert^2_{0,\E}	 \le c  \pE^{4d} \Vert  \bE (\f - \Delta^2 \un) \Vert^2_{0,\E} 
		=
		c \pE^{4d}  ( \bE^2 (\f - \Delta^2 \un), \f - \Delta^2 \un)_{0,\E},
		\]
		and since $\bE^2 (\f - \Delta^2 \un)\in V$ and $\bE{}{}_{|\partial \E}=(\nbf \cdot \nabla \bE{}){}_{|\partial \E}=0$, error equation~\eqref{error:equation} implies
		\[
		\begin{split}
			(  \f - \Delta^2 \un, \bE^2(\f - \Delta^2 \un))_{0,\E} 	
			& = (\D^2 (u-\un) , \D^2(\bE^2 (\f - \Delta^2 \un)))_{0,\E}. \\
		\end{split}
		\]
		Applying the inverse inequality~\eqref{inverse:weighted-H1L2} twice, 
		it follows that
		\begin{equation} \label{bound:internal-residual}
			\begin{split}
				p_{\E}^{-2}\Vert \f - \Delta^2 \un \Vert_{0,\E}	 	
				& 
				\le 
				c \pE^{4d} \Vert \D^2(u-\un) \Vert_{0,\E}.
			\end{split}
		\end{equation}
		
		\paragraph*{\textbf{Estimates of $\eta_{\E,4}$.}}
		The quasi-uniformity assumptions~\eqref{quasi-uniformity:assumption} and~\eqref{bounded:polynomial} on $\hbf$ and $\pbf$, alongside the inverse inequality~\eqref{inverse:H1L2}, imply that $\eta_{\E,4}$ may be estimated by
\[
			\eta_{\E,4}^2 
			=  
			\frac{1}{2} \sum_{\F \in \EE}  
			\big\Vert {\pbf}^{-\frac{1}{2}} \llbracket (\D^2 \un) \tbf \rrbracket \big\Vert^2_{0,\F} 
			\le  
			c \sum_{\F \in \EE}  \big\Vert {\pbf}^{\frac{3}{2}} \llbracket  \nabla \un \rrbracket \big\Vert^2_{0,\F} \leq c \eta_{\E,5}^2.
\]
		
\paragraph*{\textbf{Estimate of $\eta_{\E,3}$.}}
We split $\eta_{\E,3}$ into the orthogonal normal and tangential components on each face $\F \in \EE \cap \EnI$, giving
\begin{equation} \label{bound:eta3-split}
\big\Vert {\pbf}^{-\frac{1}{2}} \llbracket (\D^2 \un) \nbf \rrbracket \big\Vert^2_{0,\F} 	= \big\Vert {\pbf}^{-\frac{1}{2}} \llbracket \nbf^{\top} (\D^2 \un) \nbf \rrbracket \big\Vert^2_{0,\F} + \big\Vert  {\pbf}^{-\frac{1}{2}} \llbracket \nbf^{\top} (\D^2 \un) \tbf \rrbracket \big\Vert^2_{0,\F},
\end{equation}
due to the symmetry of $\D^2 \un$ and recalling our notational convention~\eqref{eq:tangentaction} for its tangential component.
Arguing as before, we bound the tangential component using the inverse inequality~\eqref{inverse:H1L2} and the quasi-uniformity assumptions~\eqref{quasi-uniformity:assumption} and~\eqref{bounded:polynomial}, producing
\begin{equation} \label{bound:eta3-0}
			\big\Vert  {\pbf}^{-\frac{1}{2}} \llbracket \nbf^{\top} (\D^2 \un) \tbf \rrbracket \big\Vert^2_{0,\F} 
			\leq 
			c  
			\big\Vert {\pbf}^{\frac{3}{2}} \llbracket \nbf \cdot \nabla \un \rrbracket \big\Vert^2_{0,\F} 
			\leq 
			c 
			\eta^2_{\E,5}.
\end{equation}
To bound the normal component of~\eqref{bound:eta3-split}, suppose that $\F = \overline{\E} \cap \overline{\E}^{*}$ for some $\E^{*} \in \taun$.
We construct a bespoke bubble function $b_{\F}$ on $\F$ using the kite $\Etilde \subset \overline{\E} \cup \overline{\E}^{*}$ associated with~$\F$ discussed in Section~\ref{subsection:meshes-degrees}.
We begin with the standard face bubble function $\bEtilde$ on $\F$ in $\Etilde$, defined as the product of the nodal linear basis functions associated with the vertices of $\F$ on each triangle forming $\Etilde$.
This satisfies $\llbracket \nabla \bEtilde \cdot \nbf  \rrbracket_{F} = 0$ due to the symmetry of the kite~$\Etilde$.
Let~$\bell$ denote an affine function such that $\bell{}_{|\F} = 0$ and $(\nabla \bell\cdot \nbfF){}_{|\F} = 1$.
Define $b_\F = \bell{}\bEtilde^2$ on $\Etilde$ and $b_\F = 0$ otherwise and observe that
\[
			\begin{split}
				&b_\F\in \mathcal C^1(\Omega) \cap H^2_0(\Omega),
				\text{ so } 
				 \llbracket b_\F \rrbracket_{\Fhat} 
				 =  
				 \{  b_\F \}_{\Fhat} 
				 =
				 0,\quad
				 \llbracket \nabla b_\F \rrbracket_{\Fhat} = \bold{0} 
				 \quad \forall \Fhat \in \En, 
				\\
				&\qquad\quad
				\{ \nabla b_\F\} _{\Fhat} =  0  \text{ for all $\hat{\F}  \in \En\ \backslash \F$},	 
				\text{ with }
				\{ \nabla b_\F \}_{\F} \cdot \nbf_\F =  \bEtilde^2{}_{|\F}.
			\end{split}
\]
With $\Eit$ denoting the extension operator of Corollary~\ref{corollary:extension-inverse}, we introduce
\begin{equation} \label{definition:r}
		v = b_\F r
		\quad\text{ with }\quad
		r = \Eit \left( \pF^{-1}  \llbracket \nbf^{\top} (\D^2 \un) \nbf \rrbracket_{\F}   \right).
\end{equation}
The fact that~$v \in H^2_0(\Omega)$ and error equation~\eqref{error:equation} imply that
\begin{align} \notag
\int_{\F} \llbracket (\D^2 \un)\nbf \rrbracket \cdot \{ \nabla v \}	
& = 
(\f - \Deltan^2 \un, v)_{0,\Etilde} - (\Dn^2(u-\un), \Dn^2 v)_{0,\Etilde} 
\\
& \le \Vert \f - \Deltan^2 \un \Vert_{0,\Etilde} \Vert v \Vert _{0,\Etilde} + \Vert \Dn^2(u-\un)\Vert _{0,\Etilde} \Vert \Dn^2  v \Vert_{0,\Etilde},
\label{bound:eta3-1}
\end{align}
and we estimate the terms on the right-hand side of~\eqref{bound:eta3-1} separately.
Applying~\eqref{eq:extensionoperator}, we have	
\begin{equation} \label{bound:v}
\Vert v \Vert_{0,\Etilde} 	\le c \left \Vert \Eit\left( \pbf^{-1}  \llbracket \nbf^{\top} (\D^2 \un) \nbf \rrbracket \right) \right \Vert_{0,\Etilde} 
\le c  \left \Vert \pbf^{-2} \llbracket \nbf^{\top} (\D^2 \un) \nbf \rrbracket \right\Vert_{0,\F}.
\end{equation}
By the definition of $v$, we obtain
\[
\Vert \Dn^2 v \Vert_{0,\Etilde} \le \Vert r \Deltan b_{\F}  \Vert_{0,\Etilde} + 2  \Vert \nablan b_{\F} \cdot \nablan r \Vert_{0,\Etilde} +  \Vert b_{\F} \Deltan r \Vert_{0,\Etilde} \le c\big(   \Vert r \Vert_{0,\Etilde}  +  \Vert \nablan r \Vert_{0,\Etilde} + \Vert \Deltan r \Vert_{0,\Etilde}  \big),
\]
and estimate~\eqref{eq:extensionoperator} and definition~\eqref{definition:r} of~$r$ provide
		\begin{equation} \label{bound:D2v}
			\Vert \Dn^2 v \Vert_{0,\Etilde} \le  c     \left\Vert  \pbf^{2}  \llbracket \nbf^{\top} (\D^2 \un) \nbf \rrbracket   \right\Vert_{0,\F}  .
		\end{equation}
Recalling cell residual bound~\eqref{bound:internal-residual}, and equations~\eqref{bound:v} and~\eqref{bound:D2v}, relation~\eqref{bound:eta3-1} yields
		\begin{equation} \label{bound:eta3-1.5}
			\begin{split}
				&\int_{\F} \llbracket (\D^2 \un)\nbf \rrbracket \cdot \{ \nabla v \} 
				\le c  
				 \left\Vert  \pbf^{4d} \Dn^2(u-\un) \right\Vert _{0,\Etilde}
				\left\Vert   \llbracket \nbf^{\top} (\D^2 \un) \nbf \rrbracket   \right\Vert_{0,\F}.
			\end{split}
		\end{equation}
We complete the estimate by showing that we can bound term~$\Vert {\pbf}^{-\frac{1}{2}} \llbracket \nbf^{\top} (\D^2 \un) \nbf \rrbracket \Vert^2_{0,\F}$ from above by the left-hand side of~\eqref{bound:eta3-1.5}.
Splitting $\{\nabla v\}$ into its normal and tangential components,
and using the fact that $\{v\}|_\F =\{(\nabla v) \cdot \tbf\}|_\F =0$, we have $\nabla v|_\F =  (\nbf \cdot \nabla v|_\F)\nbf$ and it follows that
\[
			\int_{\F} \llbracket (\D^2 \un)\nbf \rrbracket \cdot \{ \nabla v \}
			= \int_{\F} \llbracket 
			\nbf^{\top} (\D^2 \un) \nbf
			\rrbracket  \{\nbf \cdot \nabla v \} . 
\]
Applying
Proposition~\ref{proposition:inverse-bubble}, with $s = 3$ on simplicial elements and $s=2$ otherwise, produces
\[
\begin{split}
\big\Vert {\pbf}^{-\frac{1}{2}} \llbracket \nbf^{\top} (\D^2 \un) \nbf \rrbracket \big\Vert_{0,\F}^2&
				\le  
				c \int_{\F} \cetaEtw (\pbf) \llbracket \nbf^{\top} (\D^2 \un) \nbf \rrbracket  \bEtilde^2 r
				=  
				c \int_{\F} \cetaEtw (\pbf)\llbracket  (\D^2 \un) \nbf \rrbracket   \cdot \{ \nabla v \},
			\end{split}
\]
where~$\cetaEtw$ is defined in~\eqref{cetaEth}. Combined with~\eqref{bound:eta3-1.5}, this produces
\[
			\big \Vert \pbf^{-\frac{1}{2}}  \llbracket \nbf^{\top} \D^2 \un \nbf \rrbracket \big\Vert_{0,\F} 		
			\le  
			c  
			\big\Vert \pbf^{4d+\frac{1}{2}} \cetaEtw (\pbf)    \Dn^2 (u-\un) \big\Vert_{0,\E \cup \E^*}.
\]
Recalling~\eqref{bound:eta3-0} and~\eqref{bound:eta3-split}, we obtain the estimate
		\begin{equation} \label{bound:eta3-final}
			\big \Vert \pbf^{-\frac{1}{2}}  \llbracket \nbf^{\top} \D^2 \un
			\rrbracket \big\Vert_{0,\F} 
			\le  
			c  
			\big \Vert \pbf^{4d+\frac{1}{2}} \cetaEtw (\pbf)    \Dn^2 (u-\un) \big \Vert_{0, \E \cup \E^*}  
			+ 
			c  
			\big\Vert {\pbf}^{\frac{3}{2}} \llbracket  \nbf \cdot \nabla \un \rrbracket \big\Vert_{0,\F}.
		\end{equation}
The final bound on~$\eta_{\E,3}$ follows by summing over all the nonboundary faces of~$\E$.
		
		\paragraph*{\textbf{Estimate of $\eta_{\E,2}$.}}
		Once again, let $\Etilde$ be the kite associated with $\F$, discussed in Section~\ref{subsection:meshes-degrees}, and let $\bEtilde$ be the bubble on $\Etilde$ associated with the face $\F$, introduced above.
		Let
		\begin{equation}\label{eq:eta2TestFunction}
		v = \bEtilde^2 \Eit\left( \pbf^{-3} \llbracket \nbf \cdot \nabla \Delta \un \rrbracket \right),
		\end{equation}
which, due to the properties of the bubble function, the trace inequality~\eqref{standard:trace}, and stability estimates~\eqref{eq:extensionoperator}, satisfies
		\begin{align*}
			\Vert \pbf^{4} v \Vert_{0,\Etilde} 
			+ 
			\Vert \pbf^{2} \nablan v \Vert_{0,\Etilde} 
			+ 
			\Vert \Dn^2 v \Vert_{0,\Etilde} 
			&\le 
			c \left \Vert     \llbracket \nbf \cdot \nabla \Delta \un  \rrbracket \right\Vert_{0,\F} .
		\end{align*}
Selecting~$v$ as the test function in error equation~\eqref{error:equation}, we deduce that
		\begin{align} \label{initial-bound:etaE2}
				\int_\F \llbracket \nbf \cdot \nabla \Delta \un \rrbracket v 
				& = (\f - \Delta^2 \un, v)_{0,\Etilde} - (\Dn^2(u-\un), \Dn^2 v)_{0,\Etilde} + \int_{\F} \llbracket (\D^2 \un) \nbf \rrbracket \cdot \{ \nabla v \} \\
				& \le \Vert \f - \Delta^2 \un \Vert_{0,\Etilde} \Vert v \Vert _{0,\Etilde} + \Vert \Dn^2(u-\un)\Vert _{0,\Etilde} \Vert \Dn^2 v \Vert_{0,\Etilde} + \Vert \llbracket  (\D^2 \un) \nbf \rrbracket \Vert_{0,\F} \Vert \nabla v \Vert_{0,\F}.
				\notag
		\end{align}
Recalling bound~\eqref{bound:internal-residual} on the cell residual and that on $\eta_{\E,3}$ from~\eqref{bound:eta3-final}, combined with the trace inequality \eqref{standard:trace}, it follows that
\[
			\begin{split}
				\int_\F \llbracket \nbf \cdot \nabla \Delta \un \rrbracket v 
				\le  c  \left( \big\Vert \pbf^{4d} \cetaEtw (\pbf)  \Dn^2(u- \un)  \big\Vert_{0, \Etilde} + \big\Vert \pbf \llbracket  \nbf \cdot \nabla \un \rrbracket \big\Vert_{0,\F} \right)   \big\Vert  \llbracket \nbf \cdot \nabla \Delta \un \rrbracket \big\Vert_{0,\F}  . \\
			\end{split}
\]
		Using Proposition~\ref{proposition:inverse-bubble} with $s =3$ on simplicial elements and $s=2$ otherwise, we find
\[
			\begin{split}
				\int_\F \pbf^{-3}  \llbracket \nbf \cdot  \nabla \Delta \un \rrbracket^2  	
				& 
				\le 
				c 
				\int_\F \cetaEtw (\pbf) \llbracket \nbf \cdot  \nabla \Delta \un \rrbracket v,
			\end{split}
\]
		where $\cetaEtw$ is defined in~\eqref{cetaEth}, and therefore
\[
			\begin{split}
				\big \Vert \pbf^{-\frac{3}{2}} \llbracket \nbf \cdot \nabla \Delta \un  \rrbracket \big\Vert _{0,\F} 
				& 
				\le 
				c  
				\big\Vert \pbf^{4d+\frac{3}{2}} \cetaEtwsq(\pbf)   \Dn^2(u- \un)    \big\Vert_{0, \Etilde}  
				+  
				c
				\big\Vert \pbf^{\frac{5}{2}} \cetaEtw(\pbf) \llbracket  \nbf \cdot \nabla \un \rrbracket \big\Vert_{0,\F}.
			\end{split}
\]
The bound on $\eta_{\E,2}$ follows by summing over all nonboundary faces of $\E$, and the theorem is proven combining the bounds on the individual estimator terms.
\end{proof}
	
	\begin{remark}[Improved suboptimality on certain meshes] \label{remark:gain-in-p}
		In 2D, and when employing particular meshes in 3D, the suboptimality in~\eqref{efficiency} with respect to the polynomial degree can be reduced.
		If the kite $\Etilde$ constructed on each face $\F$ can be replaced by a rhombus in 2D or a rhomboidal polyhedron in 3D,
		the additional symmetry implies that
		the function $v$ constructed in~\eqref{eq:eta2TestFunction} further satisfies $\nabla v  {}_{|\F} = \bold{0}$, implying the last term on the right-hand side~\eqref{initial-bound:etaE2} vanishes.
Bound~\eqref{efficiency} therefore becomes
\[
			\etaE
			\le 
			c 
			\big( 
			\big\Vert \pbf^{4d+\frac{1}{2}} \cetaEtw(\pbf)   \Dn^2(u-\un) \big\Vert_{0,\omegaE}  
			+ 
			\big\Vert \pbf^{\frac{1}{2}}  \taubold ^{\frac{1}{2}} \llbracket \nabla \un \rrbracket \big\Vert_{0,\partial \E} 	
			+ 
			\big\Vert \sigmabold ^{\frac{1}{2}} \llbracket \un \rrbracket \big\Vert_{0,\partial \E}
			\big).
\]
Such a choice for $\Etilde$ is always possible in 2D and on parallelepiped meshes in 3D, and on tetrahedral meshes in certain circumstances.
In general, in the latter case, we have~\eqref{efficiency}.
\end{remark}

\begin{remark}[Application to $\mathcal{C}^0$-interior penalty methods] \label{remark:basta}
The same arguments may be used to prove upper and lower bounds for the estimator for $\mathcal C^0$-interior penalty methods, with the difference that $\llbracket \un \rrbracket = \llbracket (\nabla v)\cdot\tbf \rrbracket = 0$.
Term~$\eta_{\E,6}$ in Definition~\ref{def:estimator} would therefore vanish, whereas term~$\eta_{\E,5}$ would become $\llbracket \nbf \cdot \nabla\un \rrbracket$.
\end{remark}
	
\begin{remark}[Inhomogeneous boundary data]\label{Inhomogous-BC}
Inhomogeneous Dirichlet boundary conditions may be treated similarly, as in~\cite{beirao2010nonhomo}.
Suppose that~$\gD$ and~$\gN$ are the two Dirichlet boundary conditions of~$u \in H^2(\Omega)$ over~$\partial \Omega$.
In particular,  we have $u{}_{|\partial \Omega} = \gD \in H^{1-\varepsilon} (\partial \Omega)$ for all~$\varepsilon >0$ and, for all the faces~$\FB$ of~$\partial \Omega$, $\gD{}_{|\FB} \in H^{\frac{3}{2}}(\FB)$.
Moreover, $\nbf \cdot \nabla  u{}_{|\partial \Omega} = \gN \in H^{-\varepsilon} (\partial \Omega)$ for all~$\varepsilon >0$ and, for all the faces~$\FB$ of~$\partial \Omega$, $\gN{}_{|\FB} \in H^{\frac{1}{2}}(\FB)$.
The dG scheme \eqref{IPdGFEM} then reads: find $\un \in \Vn$ such that $\Bn(\un, \vn) = l(\vn)$ for all $\vn \in \Vn$, where $l(\vn)$ is defined as
\[
			l(\vn) = (\f, \vn)_{0,\Omega}
			+\int_{\partial \Omega} \big(\gD (\sigmabold \vn +\nbf \cdot \nabla \Delta \vn ) \big)
			+ \big((\gN \nbf + ((\nabla \gD) \cdot \tbf) \tbf) \cdot (\taubold \nabla \vn - (\D^2 \vn) \nbf )\big).
\]
The boundary contributions in the jump terms appearing in $\eta_{\E,4}$, $\eta_{\E,5}$, and $\eta_{\E,6}$ of the error estimator, defined in Definition~\ref{def:estimator}, become
\[
\begin{split}
					& \llbracket (\D^2 \un) \tbf \rrbracket{}_{|\F \in \EnB}
					= 
					\big( \tbf^{\top} \D^2 (\un - \gD) \tbf \big) \tbf 
					+\big(  \nbf^{\top} ((\D^2\un) \tbf - \nabla \gN) \big) \nbf,\\
					&\llbracket \nabla \un  \rrbracket{}_{|\F \in \EnB}= 
					\big(\nbf \cdot  \nabla \un - \gN \big)\nbf 
					+ \big( (\nabla(\un - \gD)) \cdot \tbf \big) \tbf, \qquad 
					 \llbracket \un  \rrbracket{}_{|\F\in \EnB} =  \un - \gD    .\\
				\end{split}
\]
The reason why we need to pick the nonhomogeneous boundary conditions in spaces with low regularity on~$\partial \Omega$ is that~$\Omega$ is Lipschitz,
and the trace theorems for Lipschitz domains have bounded regularity shift properties; see e.g. \cite[Theorem~A.20]{SchwabpandhpFEM}.
\end{remark}
	
	\section{Numerical results} \label{section:nr}
In this section, we present some numerical experiments assessing the performance of the dG scheme~\eqref{IPdGFEM} and the estimator $\eta$ from Definition~\ref{def:estimator}.
The results in this section were computed using the AptoFEM library, developed by Professor P. Houston and collaborators.

We take the dG parameters $c_{\sigmabold} = c_{\taubold} = 10$,
and use the estimator to drive~$\h$- and $\h\p$-adaptive algorithms based on the standard iteration
\[
\text{solve} \quad \longrightarrow \quad \text{estimate} \quad \longrightarrow \quad \text{mark} \quad \longrightarrow \quad \text{refine}.
\]
For $\h$-adaptivity, the marking step uses the maximum strategy with parameter $\theta = 0.5$, and refinement in $d$ dimensions is achieved by splitting marked mesh elements into $2^d$ child elements of the same type.
Hanging nodes may be eliminated from triangular or tetrahedral meshes using the well known red-green refinement algorithm.

The $\h\p$-adaptive algorithm employs a variant of the Melenk-Wohlmuth marking strategy~\cite[Section~4.2]{MelenkWohlmuth_hpFEMaposteriori}
to determine whether to refine an element by splitting it into children or by increasing the local polynomial degree.
This is presented in Algorithm~\ref{algorithm:hp:refinement:strategy}, and we take the parameters~$\sigma = 0.7$, $\gamma_\h = 3$, $\gamma_\p = 0.9$,
and the initial predicted error indicator on each element is taken to be infinite to ensure the algorithm initially attempts to increase~$\p$.
Here, we modify the algorithm by coarsening in~$\h$ when increasing~$\p$ and vice versa.

In both cases, we allow~$\p$ varying by at most one across each face, and enforce a maximum of one hanging node per face.

\begin{algorithm}
\caption{The marking algorithm for $\h\p$-adaptivity on adaptive step $n$.}
\label{algorithm:hp:refinement:strategy}
\begin{algorithmic}
	\State
	$
	\text{\textbf{Inputs:}}
	\begin{cases}
		\text{Mesh and discrete solution: } &\taun, \,\un,
		\\
		\text{Computed and predicted error indicators: } & \eta_{\text{comp},n}, \, \eta_{\text{pred},n}
		\\
		\text{Marking parameters: } & \sigma, \, \gamma_\h, \, \gamma_\p, 
	\end{cases}
	$
\For {$\E \in \taun$}
\If{$\eta_{\text{comp}, \E, n}^2 \ge \sigma \max_{\E} \eta_{\text{comp}, \E, n}^2$}
				\Comment{\emph{mark $\E$ for refinement}}
\If{$\eta_{\text{comp}, \E, n}^2 \ge  \eta^2_{\text{pred},\E,n}$}
				\Comment{\emph{$\h$-refinement}}
\State{subdivide $\E$ into $N^\E$ children $\E_S$}
\State degree of accuracy on all $\E_S$ decreased by $1$
\State $\eta_{\text{\text{pred}}, \E_S, n+1}^2 = \frac{\gamma_\h}{N^\E} \, (0.5)^{2\pE - 2} \eta^2_{\text{comp}, \E, n}$
\Else
				\Comment{\emph{$\p$-refinement}}
\State degree of accuracy on $\E$ increased by $1$
\State mark $\E$ for $\h$-coarsening
\State $\eta^2_{\text{pred},\E,n+1} = \gamma_\p\, \eta^2_{\text{comp},\E,n}$
\EndIf
\Else{}
				\Comment{\emph{no refinement}}
\State $\eta^2_{\text{pred},\E,n+1} = \eta^2_{\text{pred},\E,n}$
\EndIf
\EndFor
\end{algorithmic}
\end{algorithm}

We focus on the following two particular benchmark problems.

\paragraph*{\textbf{L-shaped domain benchmark.}}
Let $\Omega_1 = (-1,1)^2 \setminus \big( [0,1)\times (-1,0] \big)$ and let $(r, \theta)$ denote the polar coordinates centered at the re-entrant corner $(0,0)$.
We take the benchmark solution
\begin{equation} \label{u1}
u_1(r, \theta) 	=  r^{\frac{4}{3}} \sin \Big( \frac{4}{3} \theta  \Big)\in H^{\frac{7}{3}-\varepsilon}(\Omega_1) \quad \forall	\varepsilon > 0,
\end{equation}
which satisfies the biharmonic problem on this domain with $f = 0$ and inhomogeneous boundary data, meaning that we modify the error estimator according to Remark~\ref{Inhomogous-BC}.
	
\paragraph*{\textbf{3D benchmark.}}
On the domain $\Omega_2 = [0,1]^3$, we consider the smooth benchmark solution
\begin{equation} \label{u2}
	u_2(x, y, z) = \big( \sin(\pi x) \sin(\pi y) \sin(\pi z) \big)^2,
\end{equation}
which satisfies the inhomogeneous biharmonic problem with homogeneous boundary conditions.

\subsection{Dependence of the effectivity index on~$p$} \label{subsection:effectivity-index-p}
The results of Theorems~\ref{theorem:reliability} and~\ref{theorem:efficiency} differ by an algebraic function of the polynomial degree~$\p$.
To understand the practical manifestation of this gap, we investigate the dependence on~$\p$ of the
effectivity index
\[
	\text{effectivity} = \frac{\eta}{\Vert u-\un \Vert_{dG}}.
\]
Figure~\ref{figure:effectivity-index} shows the effectivity index for the L-shaped domain benchmark $u_1$ in~\eqref{u1}
using a mesh of 12 square elements as the uniform polynomial degree is varied from $2$ to $20$.
We observe an algebraic growth like $\p^{1.8}$, which is significantly less severe than those predicted in Theorem~\ref{theorem:efficiency} and discussed in Remark~\ref{remark:gain-in-p}.

\begin{figure}
	\centering
	\includegraphics [angle=0, width=0.40\textwidth]{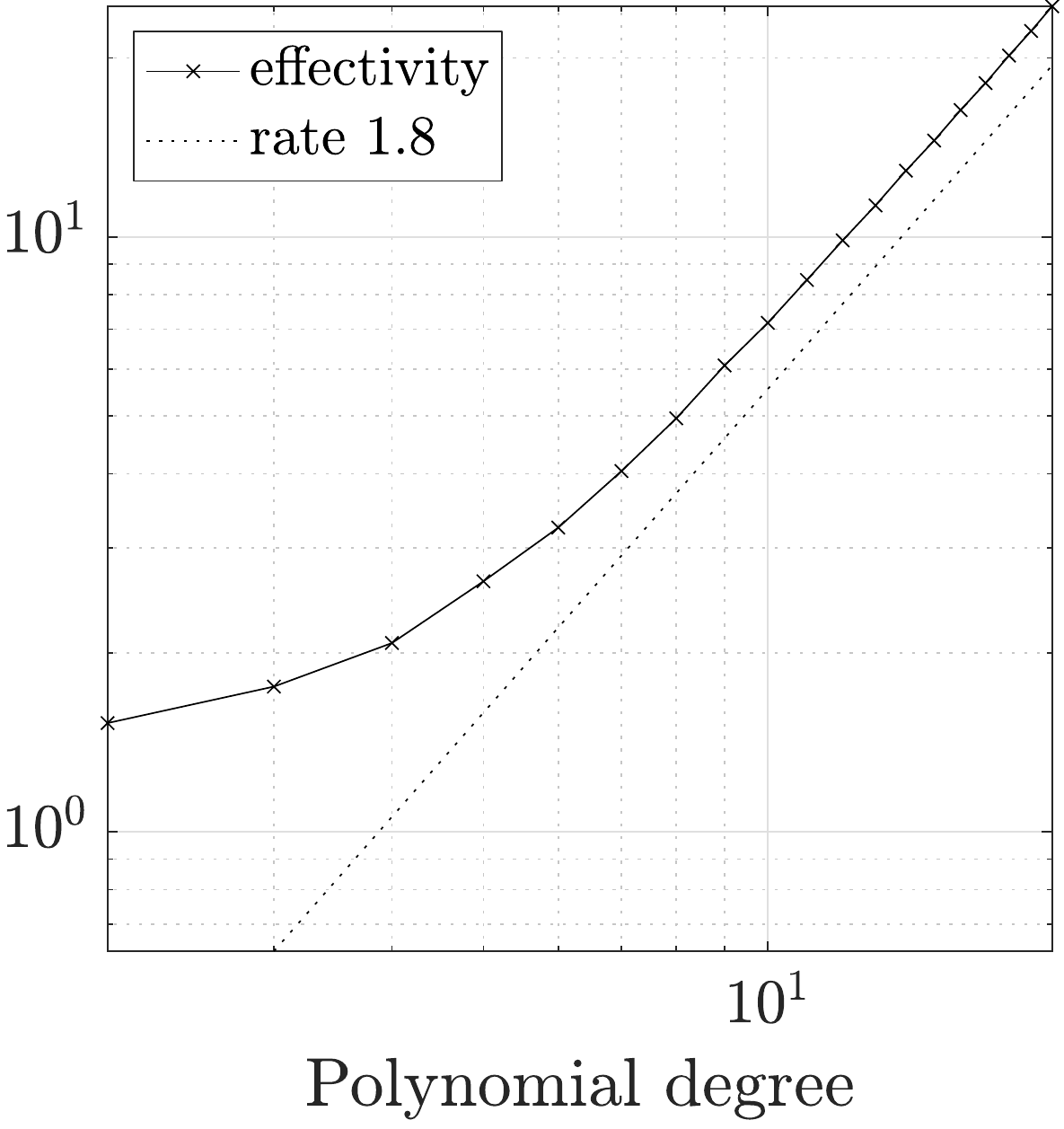}
\caption{Effectivity index for the exact solution $u_1$ defined in~\eqref{u1} on a uniform mesh consisting of $12$ square elements and polynomial degrees 2 to 20.}
\label{figure:effectivity-index}
\end{figure}
	
\subsection{$\h$- and $\h\p$-adaptivity for the L-shaped domain benchmark $u_1$ in~\eqref{u1}} \label{subsection:nr-hp}

We study the behaviour of the estimator through the $\h$- and $\h\p$-adaptive algorithms described above.
Figure~\ref{figure:nr-u2} compares the errors, estimators and effectivities for the $\h$-adaptive algorithm with fixed $\p=2$ and~$3$ against those of the $\h\p$-adaptive algorithm initialised with~$\p = 2$, using square meshes in both cases.
We observe that the $\h$-adaptive algorithm recovers the optimal rate of convergence of $-\frac{1}{2}$ for $\p=2$, and~$-1$ for~$\p=3$, with respect to the number of degrees of freedom, despite the singularity at the re-entrant corner.
This is reflected in the effectivity indices, which remain at an approximately constant value between 1.5 and 2 throughout the simulation, demonstrating excellent agreement between the error and estimator.

As expected, the $\h\p$-adaptive algorithm converges more rapidly with respect to the number of degrees of freedom, although the effectivity index may be observed to grow throughout the simulation,
due to the dependence of the effectivity on~$p$, as explored in Section~\ref{subsection:effectivity-index-p}.

We also investigate the impact of the mesh geometry and the presence of hanging nodes on the estimator.
The results are plotted in Figure~\ref{figure:nr-u2-2}, for the $\h$-adaptivity with $p=3$ and meshes consisting of square elements with hanging nodes, triangular elements with hanging nodes, or triangular elements without hanging nodes.
The scheme and estimator appear to be robust with respect to the choice of mesh, with similar results recovering optimal convergence rates in all cases.
The effectivities are slightly higher on triangular elements, between approximately 1.5 and 4, and appear to be slightly increased by removing hanging nodes.

Examples of the adapted meshes produced by the two algorithms are shown in Figure~\ref{figure:hp-meshes}.
For the $\h\p$-adaptive algorithm, we colour the mesh using the polynomial degree on each element.
We observe the expected grading behaviour, with fine elements with low polynomial degrees placed around the singularity at the re-entrant corner, and large elements with high polynomial degrees elsewhere.
Moreover, the meshes reflect the symmetry of the problem in the line $y=-x$.

\begin{figure}
	\centering
	\includegraphics [angle=0, width=0.49\textwidth]{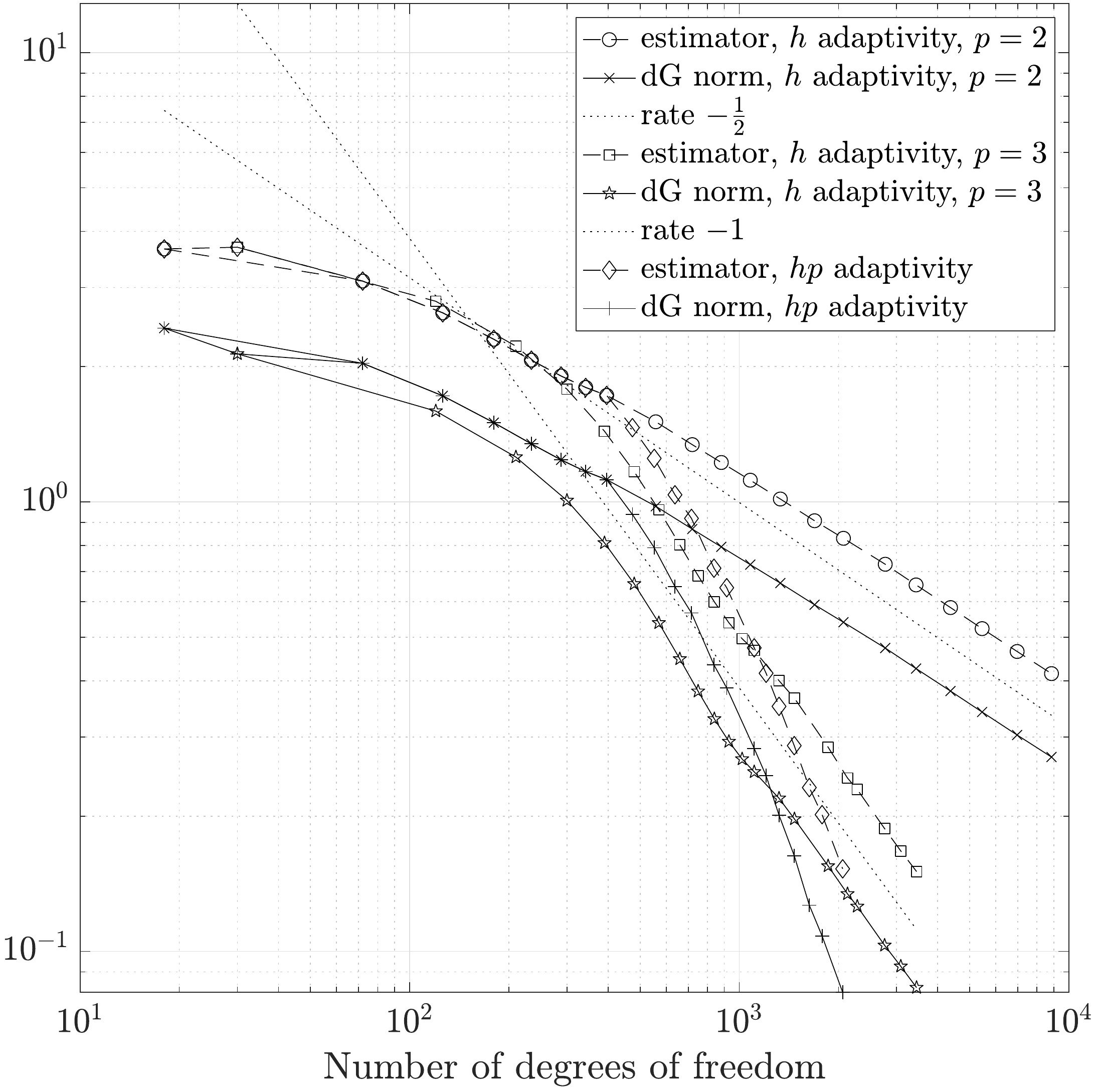}
	\includegraphics [angle=0, width=0.48\textwidth]{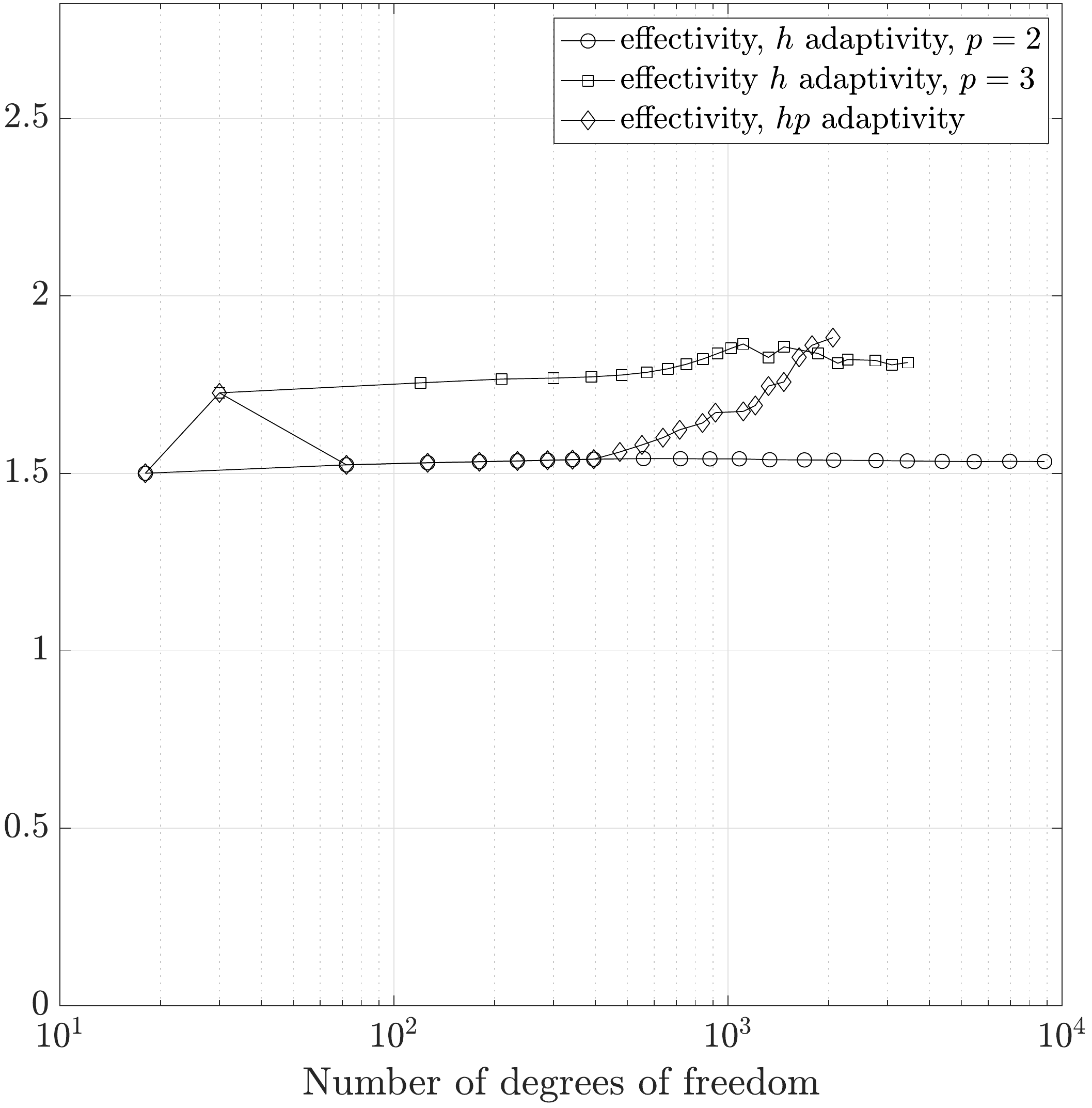}
	\caption{Errors and estimators (left) and effectivity indices (right) for the exact solution $u_1$ in~\eqref{u1} using the $\h$-adaptive algorithm with $p=2$ and $3$, and the $\h\p$-adaptive algorithm.}
	\label{figure:nr-u2}
\end{figure}

\begin{figure}
	\centering
	\includegraphics [angle=0, width=0.5\textwidth]{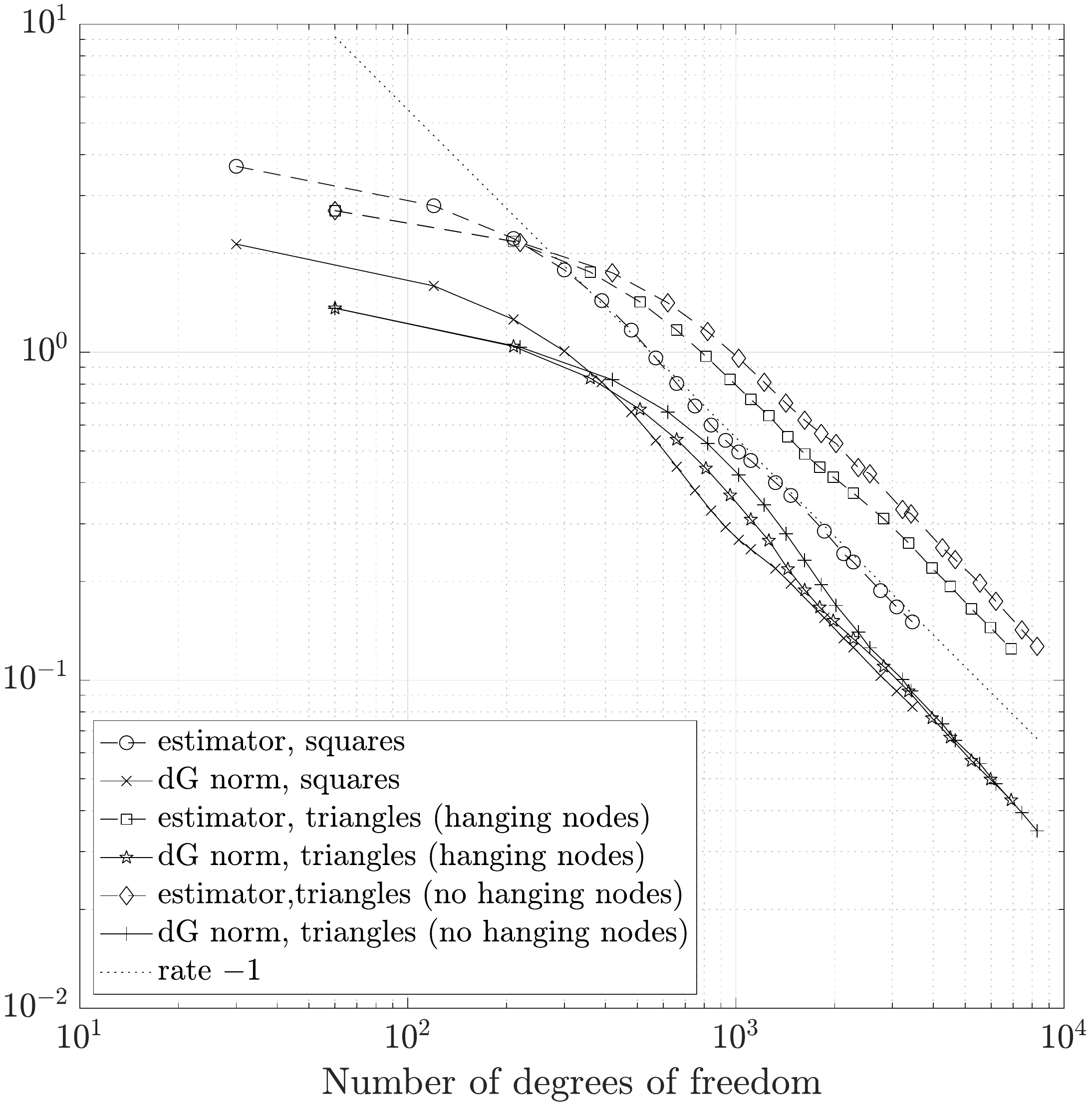}
	\includegraphics [angle=0, width=0.49\textwidth]{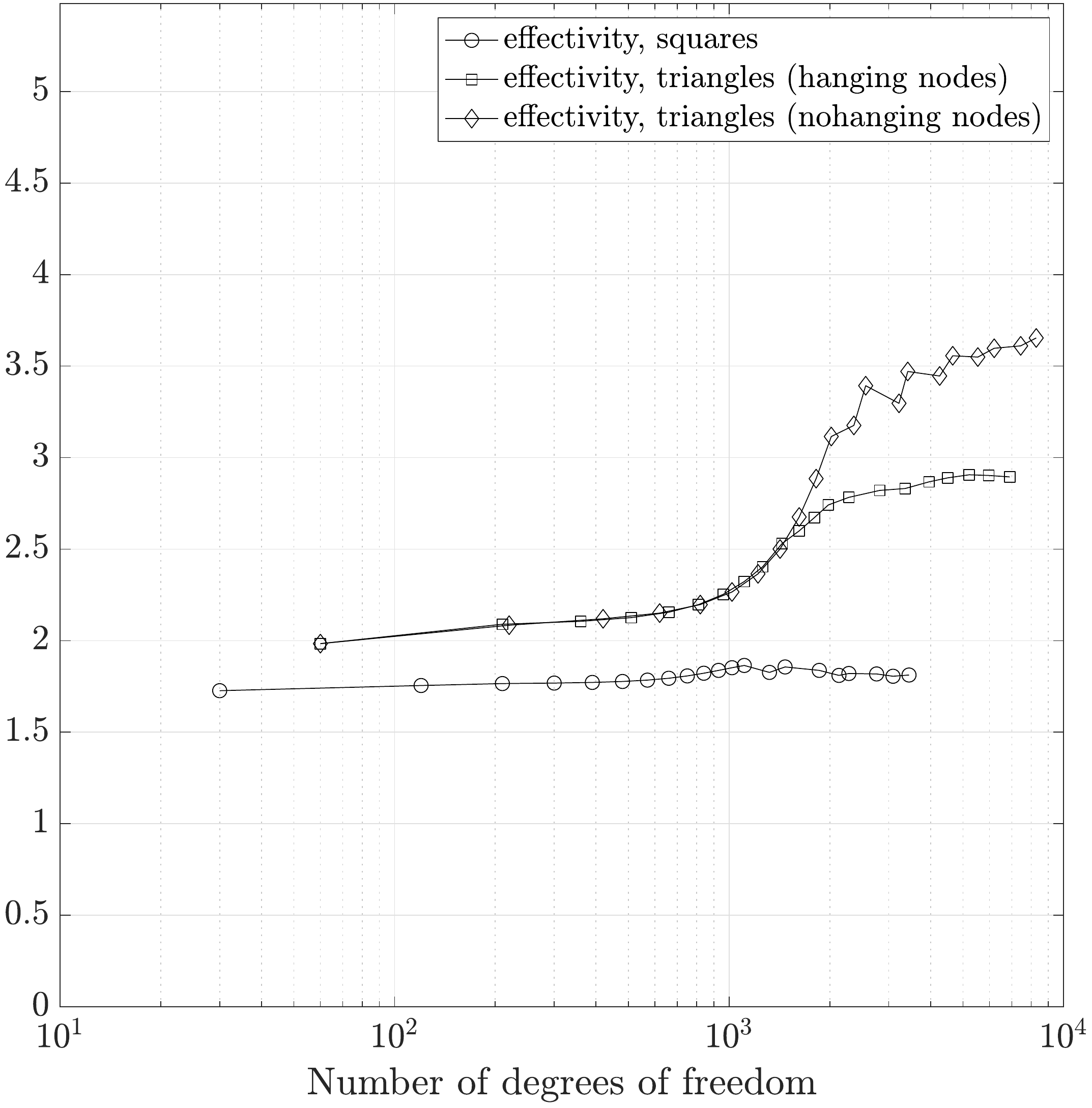}
	\caption{Errors and estimators (left) and effectivity indices (right) for the exact solution $u_1$ in~\eqref{u1} under $\h$-refinement with $p=3$
using triangular meshes with and without hanging nodes, and square meshes with hanging nodes.}
	\label{figure:nr-u2-2}
\end{figure}

\begin{figure}
	\centering
	\includegraphics [angle=0, width=0.49\textwidth]{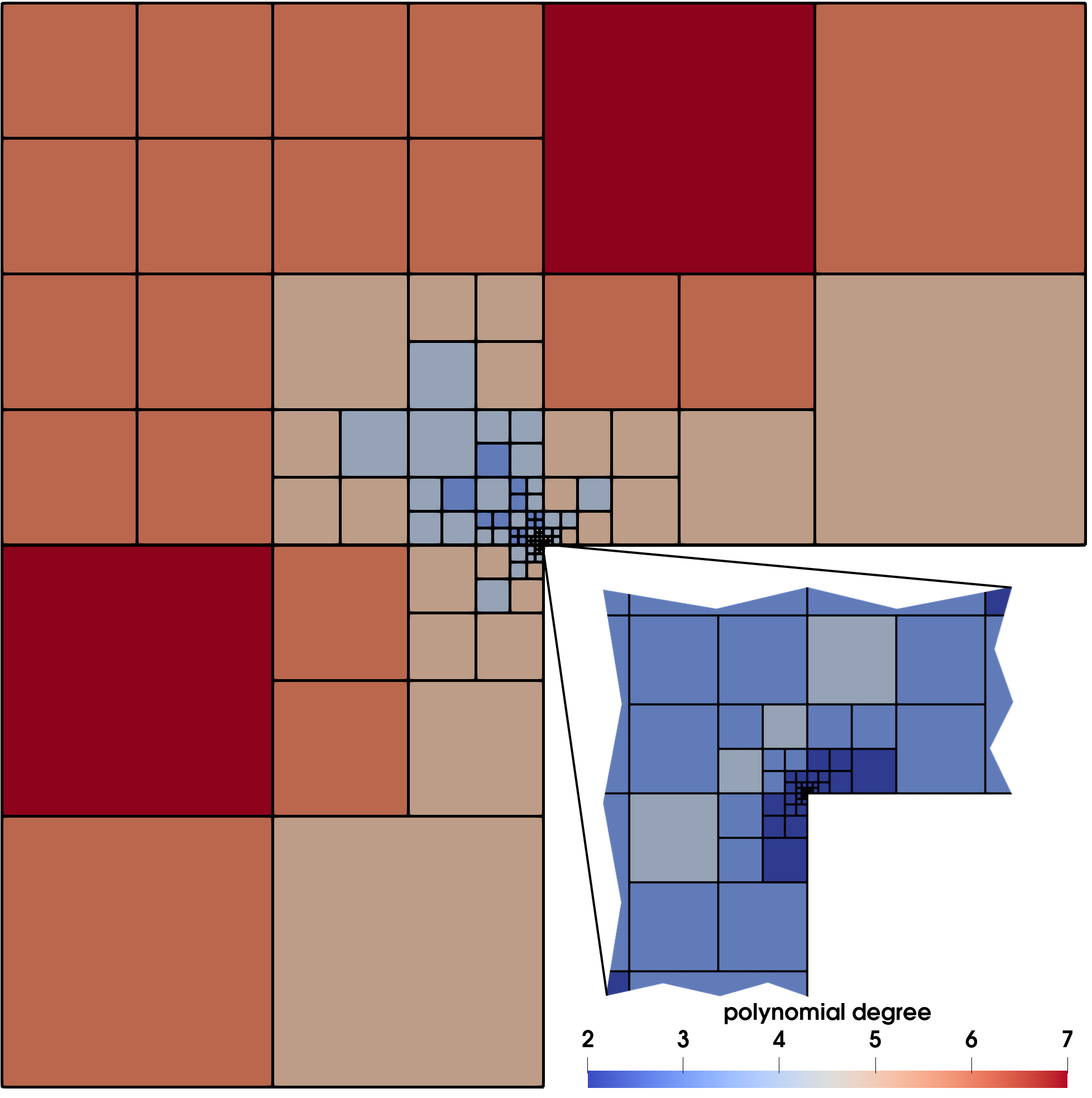}
	\includegraphics [angle=0, width=0.49\textwidth]{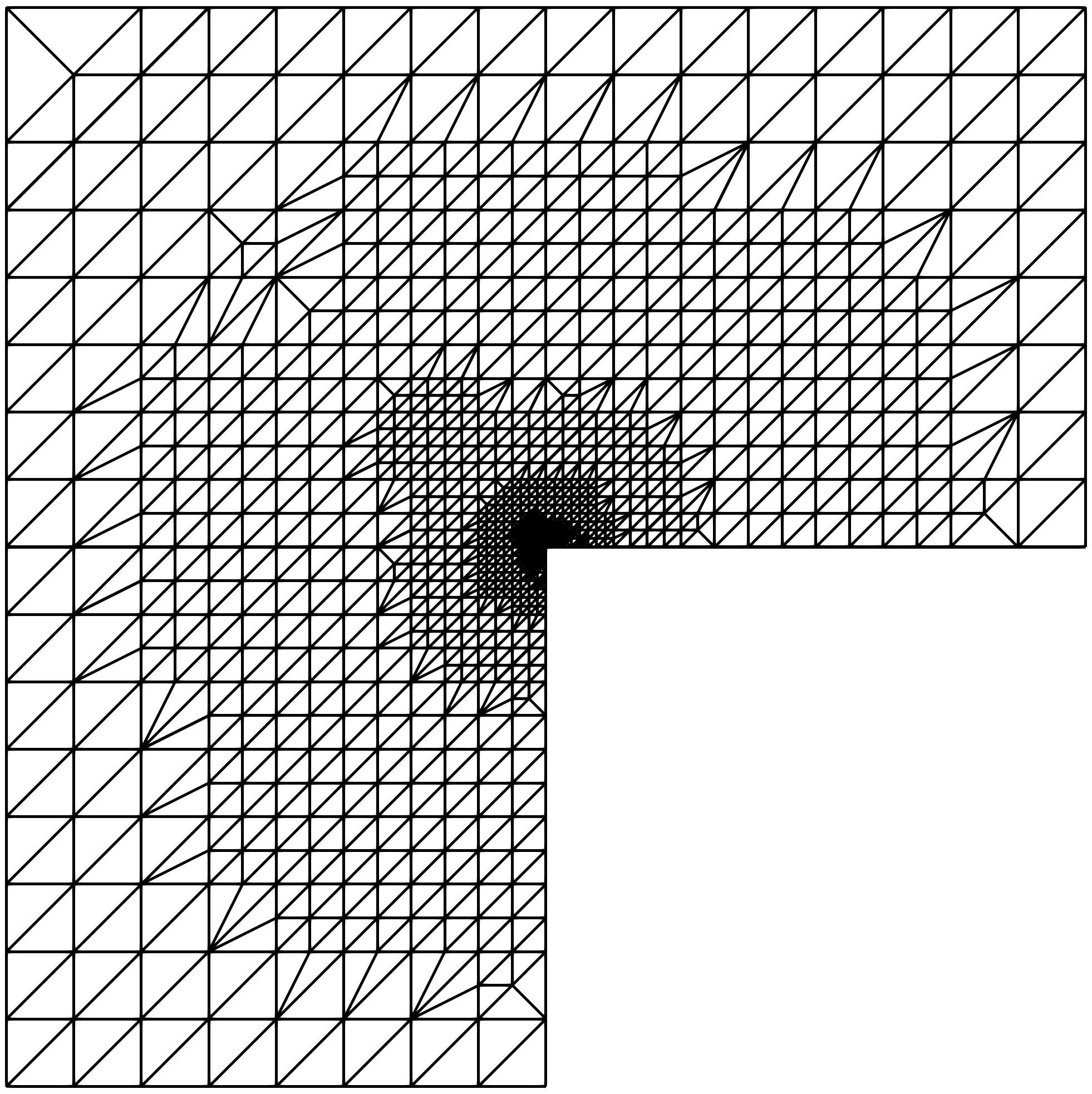}
\caption{Adapted meshes for solution $u_1$. Left: square mesh and polynomial degree distribution after 30 $\h\p$-adaptive steps. Inset: an enlargement of the region $[-0.0001, 0.0001]^2$ around the re-entrant corner.
Right: triangular mesh with hanging nodes removed, after 20 $h$-adaptive steps.}
	\label{figure:hp-meshes}
\end{figure}
	
\subsection{Adaptivity for the 3D benchmark solution $u_2$ in~\eqref{u2}}

Both adaptive algorithms were further applied to the smooth 3D benchmark problem, and the estimators, errors, and effectivities are plotted in Figure~\ref{fig:sin3d}.
In all cases, the initial mesh consisted of 64 cubic elements.
The $\h$-adaptive algorithm, using $p=2$ and $p=3$, may be seen to converge at the expected optimal rates of $-\frac{1}{3}$ and $-\frac{2}{3}$, respectively.
The effectivities also remain well behaved, settling down to values between $1.5$ and $3$ after an initial pre-asymptotic regime.

The $\h\p$-adaptive algorithm was initialised with~$\p=2$ and may be seen to converge significantly more rapidly than the $\h$-adaptive algorithms, with exponential convergence is observed for both the error an estimator.
In this case, the smoothness of the solution leads the $\h\p$-adaptive algorithm to always select $\p$-adaptivity, and no $\h$-refinement is performed.
As may be expected from Section~\ref{subsection:effectivity-index-p}, the effectivities grow throughout the simulation as the polynomial degree increases, reaching a value of 7.

\begin{figure}
\centering
\includegraphics [angle=0, width=0.495\textwidth]{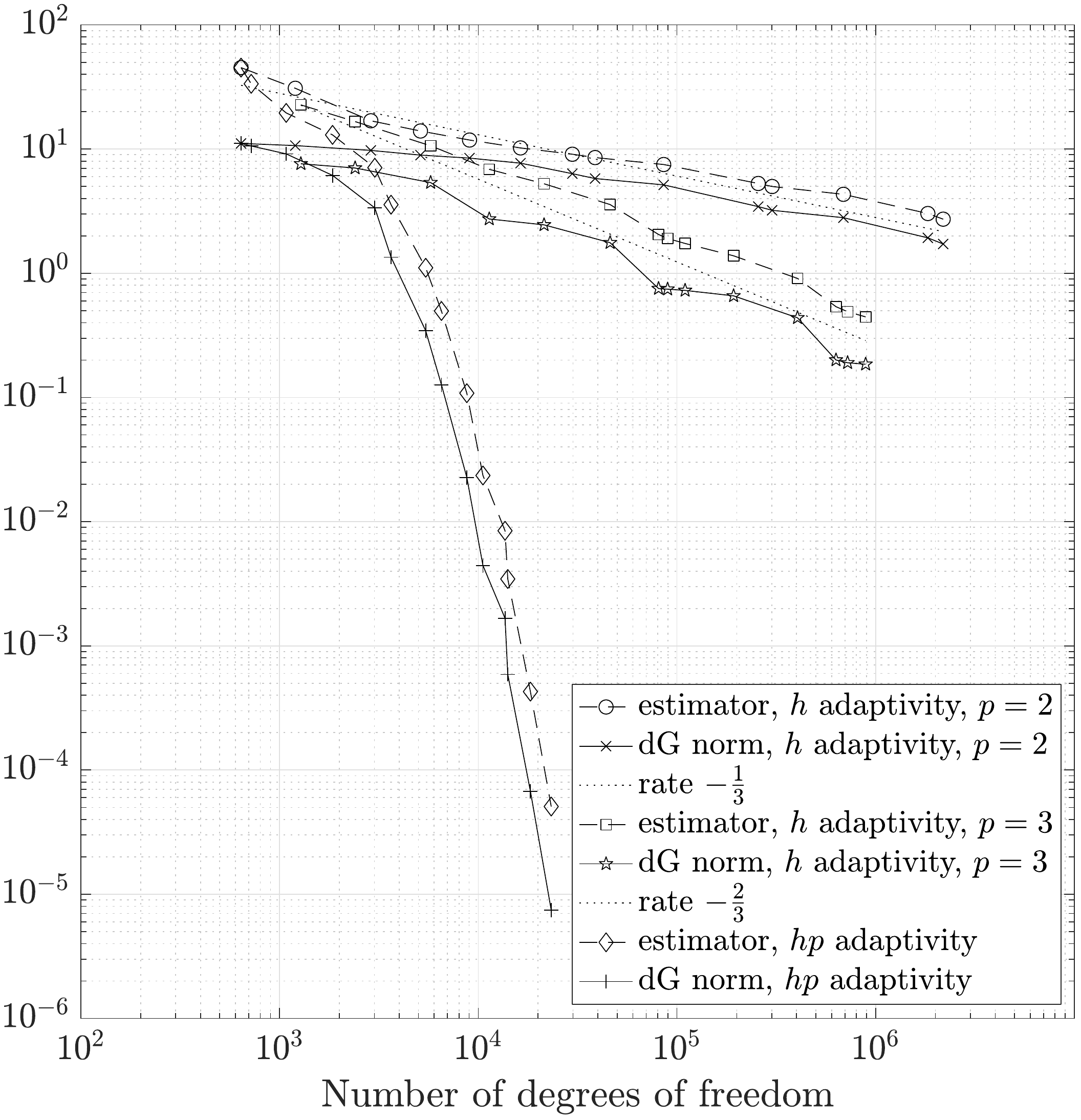}
\includegraphics [angle=0, width=0.49\textwidth]{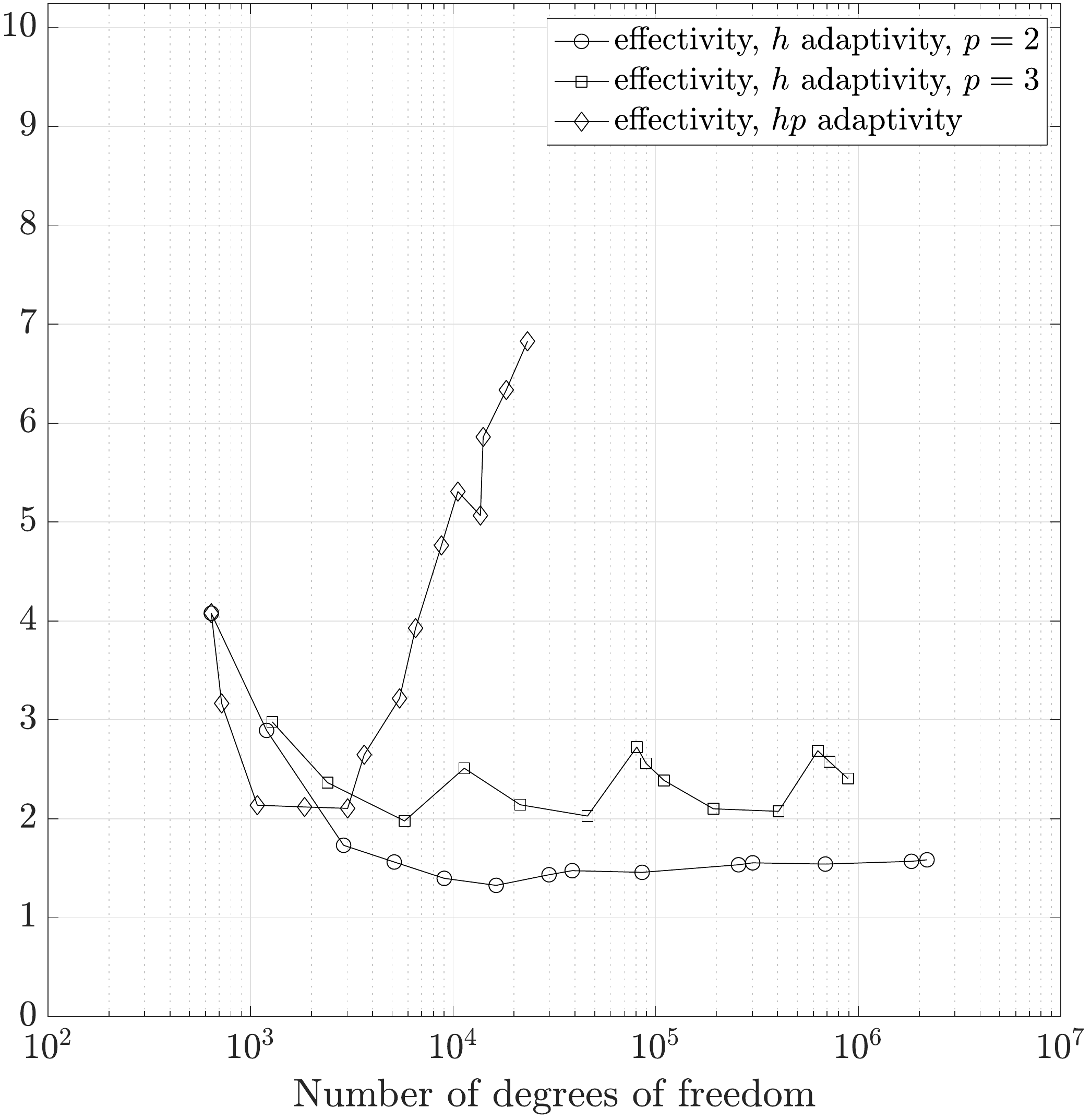}
\caption{Errors and estimators (left), and effectivity indices (right) for the 3D benchmark $u_2$ in~\eqref{u2} under $\h$- and $\h\p$-refinements using cube meshes with hanging nodes.}
\label{fig:sin3d}
\end{figure}
	
\section{Conclusions} \label{section:conclusions}
We have developed a residual-based error estimator for the $\h\p$-version interior penalty discontinuous Galerkin method for 2D and 3D biharmonic problems.
This is the first a posteriori error indicator for the dG methods with arbitrary polynomial order for the 3D biharmonic problem.
The upper and lower bounds are explicit in terms of the polynomial degree and, although the lower bound is suboptimal with respect to the polynomial degree, the dependence is algebraic and so exponential convergence is still attained.
Our analysis is based on an elliptic reconstruction of the dG solution combined with a generalised Helmholtz decomposition of the error.
We also discussed $\h\p$-explicit polynomial inverse estimates for bubble functions and extension operators.
The practical behaviour of the theoretical results was assessed through several numerical examples in two and three dimensions, using $\h$- and $\h\p$-adaptive algorithms.

In future, we plan to further investigate improving the dependence of the theoretical lower bounds on the polynomial degree.
Three dimensional generalisations of the $H^2$-stable extension operator in~\cite{lederer2018polynomial} could further lead to a polynomial robust a posteriori error estimator based on flux equilibration.
An additional avenue for future research is to design robust adaptive algorithms for singularly perturbed fourth order PDEs used e.g. in the modelling of two phase flows.
	
\section*{Acknowledgements}
This work was completed while ZD was working at the School of Mathematics at Cardiff University, UK;  the support from Cardiff University is gratefully acknowledged.
L. M. acknowledges the support of the Austrian Science Fund (FWF) project P33477.
O. J. S. acknowledges support from the EPSRC (grant number EP/R030707/1).
The authors would like to thank Professor P. Houston for kindly providing the AptoFEM software library, which was used to compute the numerical results, and Professor E. Georgoulis for his valuable advice.

	{\footnotesize
		\bibliography{bibliogr}
	}
	\bibliographystyle{plain}
	
\end{document}